\documentclass[reqno]{amsart}
\usepackage{amssymb,amsmath}
\usepackage[all]{xy}
\usepackage{graphicx}

\numberwithin{equation}{section}

\newtheorem{theorem}{Theorem}[section]
\newtheorem{definition}[theorem]{Definition}

\newtheorem{lemma}[theorem]{Lemma}
\newtheorem{remark}[theorem]{Remark}
\newtheorem{prop}[theorem]{Proposition}

%always
\newcommand{\AI}{A_\infty}

%temporarily

\newcommand{\OT}{\otimes}

\newcommand{\CC}{\mathbb{C}}
\newcommand{\RR}{\mathbb{R}}
\newcommand{\QQ}{\mathbb{Q}}
\newcommand{\kk}{\boldsymbol{k}}
\newcommand{\ZZ}{\mathbb{Z}}
\newcommand{\E}{\epsilon}

\newcommand{\CO}{\mathcal{O}}

\newcommand{\CL}{\mathcal{L}}

\newcommand{\WH}[1]{\widehat{#1}}
\newcommand{\HH}[1]{\widehat{#1}}
\newcommand{\WT}[1]{\widetilde{#1}}
\newcommand{\OL}[1]{\overline{#1}}
\newcommand{\UL}[1]{\underline{#1}}
\newcommand{\wtd}[1]{\widetilde{#1}}
\newcommand{\NOV}{\Lambda_{nov}}
\newcommand{\NOVO}{\Lambda_{0,nov}}
\newcommand{\PD}[1]{\frac{\partial}{\partial x^{#1}}}

\begin{document}

\title{Notes on Kontsevich-Soibelman's theorem\\about cyclic $\AI$-algebras}
\author{Cheol-hyun Cho and Sangwook Lee}
\address{Cheol-Hyun Cho, Department of Mathematics and Research Institute of Mathematics, Seoul National University,
Kwanakgu, Seoul, South Korea, Email: chocheol@snu.ac.kr, 
Sangwook Lee, Department of Mathematics, Seoul National University, Email: leemky7@snu.ac.kr, }
\begin{abstract}
Kontsevich and Soibelman has proved a relation between a non-degenerate cyclic homology element of an $\AI$-algebra $A$ and its cyclic inner products on the minimal model of $A$. We find an explicit formula of this correspondence, in terms of the strong homotopy inner products and negative cyclic cohomology of $A$. We prove that an equivalence class of the induced strong homotopy inner product depends only on the given negative cyclic cohomology class. Also, we extend such a correspondence to the case of gapped filtered $\AI$-algebras.
\end{abstract}
\thanks{Authors were supported by the Basic research fund 2009-0074356 funded by the Korean government}
\maketitle

\section{Introduction}
$\AI$-algebra is a strong homotopy associative algebra introduced by Stasheff \cite{St}.
Cyclic symmetry of an $\AI$-structure (also called Calabi-Yau property) plays a fundamental role in its homological algebra. It also
plays an important role in homological mirror symmetry and open topological field theory.
It was first introduced by Kontsevich \cite{K}, as an invariant non-commutative symplectic structure of the dual formal manifold. 
In this correspondence,  tensor coalgebra corresponds to
the formal non-commutative manifolds, $\AI$-algebra structure (codifferential) to the formal vector field $Q$ with $[Q,Q]=0$.
This dual point of view turns out to be rather powerful and Kontsevich and Soibelman has proved the following remarkable theorem.

\begin{theorem}[\cite{KS} Theorem 10.2.2]\label{thm:KS}
For weakly unital, compact $\AI$-algebra $A$, cyclic cohomology class which is homologically
non-degenerate gives rise to a class of isomorphisms of cyclic inner products on a minimal model of $A$.
\end{theorem}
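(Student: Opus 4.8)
The plan is to factor the correspondence through the notion of a strong homotopy inner product, and to reduce the output---a cyclic inner product on the minimal model---to an obstruction-theoretic strictification. First I would make the input explicit: a homologically non-degenerate class in the negative cyclic cohomology of $A$ is represented by a cocycle which, upon unwinding the cyclic differential, is a sequence of cyclically symmetric multilinear functionals compatible with the operations $\{m_k\}$ of $A$ up to the cyclic coboundary. I would repackage this cocycle as a strong homotopy inner product on $A$: a family of pairings $\langle\,\cdot\,,\,\cdot\,\rangle_k$ whose leading term $\langle\,\cdot\,,\,\cdot\,\rangle_1$ is an $m_1$-cochain map into the dual and whose higher terms supply the coherence homotopies. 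The non-degeneracy hypothesis says precisely that the pairing induced on cohomology $H(A)$ is non-degenerate. This step realizes the explicit formula advertised in the abstract.

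Next I would transport the data to the minimal model. Fixing a deformation retraction of the complex $(A,m_1)$ onto $H(A)$, the homotopy transfer (sum-over-trees) formula produces the minimal $\AI$-structure on $H(A)$, and the same retraction data pushes the strong homotopy inner product forward to a strong homotopy inner product on $H(A)$ whose leading pairing remains non-degenerate.

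The crux is then to strictify. Since $m_1$ vanishes on the minimal model, the coherence equations for the inner product become purely algebraic, and the failure of strict cyclicity is governed by classes in the cyclic cohomology of $(H(A),\{m_k\})$. Using the non-degenerate pairing to identify $H(A)$ with its dual, I would run an induction on word length: at each stage the obstruction to removing the next higher term is a cyclic cocycle, and I would show it is automatically a coboundary, so it can be absorbed by an $\AI$-automorphism of $H(A)$ that preserves the pairing in lower orders. The outcome is a genuine cyclic inner product. I expect this inductive vanishing---maintaining cyclic symmetry and the $\AI$-relations simultaneously---to be the principal technical obstacle.

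Finally I would establish well-definedness. Two cocycles representing the same class differ by a cyclic coboundary; I would check that this coboundary exponentiates to an $\AI$-isomorphism intertwining the two strictified cyclic inner products, so that the construction yields a single isomorphism class independent of all choices (representative cocycle and retraction data). This produces precisely a class of isomorphisms of cyclic inner products on the minimal model, as claimed.
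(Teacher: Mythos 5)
Your architecture (cocycle $\to$ strong homotopy inner product $\to$ transfer to the minimal model $\to$ strictification $\to$ well-definedness) is essentially the algebraic refinement the paper proves as Theorem \ref{thm} (via Proposition \ref{prop:first}, Lemma \ref{lemma:corr} and Section \ref{sec:details}), rather than the paper's own sketch of Theorem \ref{thm:KS}, which runs entirely in the dual language of formal noncommutative symplectic geometry. However, the crux of your argument rests on a claim that fails. You assert that at each stage of the strictification the obstruction to removing the next higher term of the pairing is a cyclic cocycle of $(H(A),\{m_k\})$ that is ``automatically a coboundary.'' The cyclic cohomology of the minimal model is not acyclic --- the input class itself is a nontrivial, homologically non-degenerate cyclic cohomology class --- so no such automatic vanishing is available. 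The vanishing that actually powers the induction is the Poincar\'e lemma for the noncommutative de Rham differential $d_{cycl}$: one encodes the homotopy inner product as a two-form $\omega_\phi\in\Omega^2_{cyc}(X)$ (Lemma \ref{lemma:corr}), uses $d_{cycl}$-closedness of $\omega_\phi$ to write each non-constant term as $d_{cycl}$ of a one-form, and absorbs it by a coordinate change (the Darboux theorem). For this you must first verify the closedness condition (item (2) of Definition \ref{def:cyc2}) for your pairing; it is not automatic for an arbitrary $\AI$-bimodule map $A\to A^*$, and establishing it for $\WT{\phi_0}$ built from a negative cyclic cocycle is precisely the content of Proposition \ref{prop:first}. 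Without replacing ``cyclic-cohomology vanishing'' by ``$d_{cycl}$-acyclicity plus closedness,'' the induction does not close.

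Two further points. The Darboux-type coordinate changes used in the strictification do not commute with $Q$; they transport the $\AI$-structure along with $\omega$, so the output is a cyclic inner product on \emph{some} minimal model, not an automorphism of a fixed one ``preserving the pairing in lower orders.'' Conversely, for well-definedness one genuinely needs automorphisms commuting with $Q$: your ``exponentiate the coboundary'' idea is the right mechanism, but it is not one-shot. One solves $i_v\omega=-\CL_Q\eta$ for a vector field $v$ with $[Q,v]=0$ (Lemma \ref{lemma:vQ}), exponentiates it to an $\AI$-automorphism $e^{\WH{v}}$ (Lemmas \ref{lem:homfromv}, \ref{homfromv2}), and finds that this only corrects the symplectic form to one order higher; an infinite recursive composition of such automorphisms is required (Proposition \ref{prop:welldefined}). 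The paper singles this out as the subtle step, and your proposal underestimates it.
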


In this paper, we find an explicit correspondence of the above relation in Theorem \ref{thm}. For the explicit relations, we use the strong homotopy inner products introduced by the first author in \cite{C} which are homotopy notion of cyclic inner products. This relates the notion of cyclic inner products with the $\AI$-bimodule maps satisfying certain additional properties(see Definition \ref{def:cyc2}).
Also we find that it is better for the explicit relations to work with negative cyclic cohomology, instead
of cyclic cohomology (cf. Lemma \ref{hchc}). In particular one can directly work with Hochschild cohomology classes. This correspondence should be useful for the explicit computations of correlation functions of open TCFT's 
such as Landau-Ginzburg models or Fukaya categories.

We prove that for a homologically non-degenerate negative cyclic
cohomology class (in particular a Hochschild cohomology class) of an $\AI$-algebra $A$, we find a corresponding homotopy cyclic inner product explicitly on $A$ (not just the minimal model of $A$). 

\begin{theorem}\label{thm}
  For a weakly unital compact $\AI$-algebra $A$, a homologically nondegenerate negative cyclic cohomology class $[\phi]$ gives rise to an isomorphism class of strong homotopy inner products on $A$. In particular, from $[\phi]$, we construct an $\AI$-bimodule map $ \WT{\phi_0}:A \to A^*$ explicitly using the Proposition \ref{prop:first} which gives a strong homotopy inner product.

In particular, we have an $\AI$-algebra $B$ with the cyclic inner product $\psi:B \to B^*$, with an quasi-isomorphism $\iota:B\to A$, satisfying 
 the commuting diagram \begin{equation}\label{defdiagram2} 
\xymatrix{A \ar[d]_{\WT{\phi_0}} & B \ar[l]_{\WT{\iota}} \ar[d]^{cyc}_{\psi} \\ A^* \ar[r]^{\WT{\iota}^*} & B^* }
\end{equation}
\end{theorem}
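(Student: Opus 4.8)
The plan is to proceed in three stages: extract from the negative cyclic class an $\AI$-bimodule map, promote it to a strong homotopy inner product using non-degeneracy, and finally transport it to the minimal model where the homotopy cyclic symmetry can be rigidified. First I would fix a negative cyclic cocycle representative $\phi = \phi_0 + \phi_1 u + \phi_2 u^2 + \cdots$ of the class $[\phi]$, where $u$ is the formal parameter of the negative cyclic complex and the cocycle condition $(b+uB)\phi = 0$ unwinds to $b\phi_0 = 0$, $b\phi_1 + B\phi_0 = 0$, and so on. The leading term $\phi_0$ is thus a Hochschild cocycle, and Proposition \ref{prop:first} produces from it the $\AI$-bimodule map $\WT{\phi_0} : A \to A^*$ explicitly; here the bimodule-map relations for $\WT{\phi_0}$ are equivalent to the Hochschild cocycle condition $b\phi_0 = 0$, repackaged through the dual bimodule structure on $A^*$. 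The higher terms $\phi_1, \phi_2, \dots$ are not needed to define $\WT{\phi_0}$ itself, but they supply the homotopies that record cyclic symmetry, as clarified by Lemma \ref{hchc}.

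Next I would use the hypothesis that $[\phi]$ is homologically non-degenerate to conclude that the induced pairing on $H(A)$ is non-degenerate, so that $\WT{\phi_0}$ is a quasi-isomorphism of $\AI$-bimodules $A \to A^*$. Combined with the symmetry-up-to-homotopy encoded in the relation $b\phi_1 = -B\phi_0$, this is exactly the data of a strong homotopy inner product in the sense of Definition \ref{def:cyc2}. To obtain a well-defined \emph{isomorphism class}, I would then verify that replacing $\phi$ by $\phi + (b+uB)\eta$ alters $\WT{\phi_0}$ only by an $\AI$-bimodule homotopy, leaving the isomorphism class of strong homotopy inner products unchanged. This is the step that forces working with negative cyclic rather than ordinary cyclic cohomology: the compatibility of the homotopy $\eta$ with the Connes operator $B$ is precisely what keeps the higher symmetry data coherent across representatives.

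For the ``in particular'' statement I would choose a minimal model $B = H(A)$, equipped with its transferred $\AI$-structure and an $\AI$-quasi-isomorphism $\iota : B \to A$ obtained by homological perturbation, and set $\psi := \WT{\iota}^* \circ \WT{\phi_0} \circ \WT{\iota}$. With this definition the diagram \eqref{defdiagram2} commutes tautologically, since $\psi$ is by construction the composite $B \xrightarrow{\WT{\iota}} A \xrightarrow{\WT{\phi_0}} A^* \xrightarrow{\WT{\iota}^*} B^*$. The real content is to show that, after modifying $\iota$ within its homotopy class if necessary, the pullback $\psi$ is a \emph{strict} cyclic inner product on $B$ rather than merely a strong homotopy one, and that non-degeneracy is preserved because $\WT{\iota}$ is a quasi-isomorphism.

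I expect this last rigidification to be the main obstacle. It is carried out by an induction on word length: at each stage the failure of $\psi$ to be strictly cyclic is an obstruction cochain, which one cancels by adjusting the higher components of $\iota$ (equivalently, of the transferred structure on $B$). The crucial point is that these successive obstructions are controlled by the negative cyclic cocycle condition on $\phi$ — the components $\phi_k$ with $k \ge 1$ are exactly the data that trivialize them — so the induction never stalls. This gives the explicit, component-wise incarnation of Kontsevich--Soibelman's Theorem \ref{thm:KS}, with the minimal model $B$, its cyclic inner product $\psi$, and the compatibility square \eqref{defdiagram2} all produced constructively from the single class $[\phi]$.
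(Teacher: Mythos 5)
Your overall architecture (extract $\WT{\phi_0}$ from a representative, use non-degeneracy, check independence of the representative, rigidify on a model $B$) matches the paper's, but two of your key claims are not what actually happens, and the step that carries the real content is missing. First, in the paper's convention a negative cyclic cocycle $\phi=\sum\phi_i v^i$ satisfies $b^*\phi_i=B^*\phi_{i+1}$, so $\phi_0$ is \emph{not} in general a Hochschild cocycle, and the bimodule-map property of $\WT{\phi_0}$ is \emph{not} equivalent to $b^*\phi_0=0$. What Lemmas \ref{step1} and \ref{Bvanish} actually show is that $\WT{\phi_0}\circ\WH{m}-m^*\circ\widehat{\WT{\phi_0}}=\WT{b^*\phi_0}=\WT{B^*\phi_1}$, and that $\WT{B^*\gamma}\equiv 0$ for \emph{any} cochain $\gamma$ by the cyclic symmetry of $B^*$-images; the tilde of $b^*\phi_0$ vanishing is strictly weaker than $b^*\phi_0=0$. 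Relatedly, the higher components $\phi_k$, $k\geq 1$, do not ``supply the homotopies that record cyclic symmetry'' nor do they trivialize successive obstructions in the rigidification: skew-symmetry and closedness are built into the definition of the tilde operation itself, and the passage from the strong homotopy inner product to a strictly cyclic quasi-isomorphic model $B$ is Theorem \ref{prop:shi} (quoted from \cite{C}), which uses only $\WT{\phi_0}$ together with the noncommutative Darboux/Poincar\'e machinery applied to the closed two-form $\omega_{\WT{\phi_0}}$.

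The genuine gap is your treatment of well-definedness on cohomology classes. Saying that $\phi\mapsto\phi+(b^*+vB^*)\eta$ changes $\WT{\phi_0}$ ``only by an $\AI$-bimodule homotopy'' is both unproved and beside the point: the equivalence relation on strong homotopy inner products (Definition \ref{equivst}) is the existence of a common cyclic minimal model with a commuting square, not bimodule homotopy, and it is not clear that a bimodule homotopy would yield such a square. The paper's Proposition \ref{prop:welldefined} is the technical heart of the argument: after reducing to the minimal model one must produce an honest $\AI$-\emph{automorphism} $f$ intertwining $\WT{\phi}$ and $\WT{\phi}+\WT{b^*\eta}$. This is done by solving $i_v\omega=-\CL_Q\eta$ for a vector field $v$ (which then automatically satisfies $[Q,v]=0$ by Lemma \ref{lemma:vQ}), exponentiating $\WH{v}$ to a cohomomorphism $e^{\WH{v}}=\WH{f}$ commuting with $\WH{d}$ (Lemmas \ref{lem:homfromv}, \ref{homfromv2}), observing that this corrects the symplectic form only to first order ($\omega^{(2)}\equiv\omega+O(x^3)$), and iterating countably many times, the composition converging because the $k$-th automorphism fixes tensor lengths up to $k$. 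None of this is visible in your proposal, and without it the statement that $[\phi]$ determines an \emph{isomorphism class} is unsubstantiated.
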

In proving the theorem, we found that the lemma \ref{lemma:vQ} (due to \cite{KS}) may need more detailed explanation
which is carried out in the Lemma \ref{lem:homfromv}, \ref{homfromv2} and Proposition \ref{prop:welldefined} in this paper. This involves a rather subtle point, and
actually is a crucial step to prove the classification results.  For example, when we apply the Darboux theorem, by making the symplectic form to be constant symplectic form, the $\AI$-structure is also changed in the process. To prove that isomorphic (negative) cyclic cohomology class give rise to isomorphic inner products, $\AI$-structure needs to be fixed while transforming the symplectic structure. We explain how to obtain the actual
automorphism for such a procedure.

In \cite{KS}, it is also mentioned that $\psi_{0,0}$ can be given by the trace map, and we explain its
relation to the above theorem in section \ref{sec:compare}.

Our original motivation was to have an analogous theorem in the case of filtered $\AI$-algebras so that
we can apply such theorems to the $\AI$-algebra of Lagrangian submanifolds
where cyclicity plays an important role (see for example \cite{Fu3},\cite{Fu4}). 
This has been carried out in section \ref{sec:filter}, where we consider the case of gapped filtered $\AI$-algebras with
Novikov ring coefficients which has energy filtrations. 
It turns out one should be careful to deal with filtered notions for example in the Darboux theorem \ref{darboux}.
But we find that the main theorem and many of the explicit correspondences extend to the case of filtered $\AI$-algebras if we consider differential forms with non-negative energy coefficients. We consider the formal manifold language in the filtered case using the topological dualization of the bar complex as in \cite{C1}. Until the last section, we will not consider filtered case, and hence we have $m_0=0$.

{\bf Acknowlegements}
This is a result of our effort to understand the work of Kontsevich and Soibelman as shown in the title.

\section{$\AI$-algebras and formal manifolds}
We recall the basic notions of $\AI$-algebras and its dualization, namely formal manifolds.
\subsection{$\AI$-algebra}
We recall the definition of an $\AI$-algebra(\cite{St}).
Let $\kk$ be the field containing $\QQ$ (for example $\QQ,\RR,\CC$) with $char(\kk)=0$.
Let $C=\bigoplus_{j\in \mathbb{Z}} C^{j}$ be a graded
vector space over $\kk$, and consider its suspension $( C[1])^m =  C^{m+1}$
and $|x_i|'$ is the shifted grading $|x_i| -1$.
The \textbf{tensor-coalgebra}
of $C[1]$ over $\kk$ is given by
$ BC:=\bigoplus_{k\geq 1} T_k(C[1]),$
where \begin{equation}\label{def:tk}
T_k(C[1]) = \underbrace{C[1] \otimes \cdots \otimes C[1]}_{k},
\end{equation}
with the comultiplication $\Delta:BC\longrightarrow BC\otimes BC$ defined by
\begin{equation}\label{comul}
 \Delta(v_{1}\otimes \cdots \otimes v_{n}):=\sum_{i=1}^{n} (v_{1}\otimes \cdots \otimes v_{i})
    \otimes(v_{i+1}\otimes \cdots \otimes v_{n}).
\end{equation}
Now, consider a family of maps of degree one
$$m_k : T_k(C[1]) \to C[1], \ \ \textrm{for} \ k =1,2,\cdots.$$
We can extend $m_k$ uniquely to a coderivation
\begin{equation}\label{eq:hatd}
\WH{m}_k(x_1 \otimes \cdots \otimes x_n) = \sum_{i=1}^{n-k+1}
(-1)^{|x_1|' + \cdots + |x_{i-1}|'} x_1 \otimes \cdots \otimes m_k(x_i,
\cdots, x_{i+k-1}) \otimes \cdots \otimes x_n
\end{equation}
for $k \leq n$ and $\WH{m}_k(x_1 \otimes \cdots \otimes x_n) =0$ for $k >n$.

The coderivation $\WH{d} = \sum_{k=1}^\infty \WH{m}_k$ is well-defined as a map
from $BC$ to $BC$.
The $\AI$-equations are equivalent to the equality
$\WH{d} \circ \WH{d} =0$, or equivalently,
\begin{definition}\label{def:ai}
 An $\AI$-algebra $(C,\{m_*\})$ consists of a $\ZZ$-graded vector space $C$ over $\kk$ with
a collection of multi-linear maps  $m:= \{ m_n:C[1]^{\otimes n} \to C[1]\}_{n\geq 1}$ of
degree one satisfying the following equation for each $k=1,2,\cdots.$
\begin{equation}\label{aiformula}
0=\sum_{k_1+k_2=k+1} \sum_{i=1}^{k_1-1} (-1)^{\epsilon_1} m_{k_1}(x_1,\cdots,x_{i-1},m_{k_2}(x_i,\cdots,x_{i+k_2-1}),\cdots,x_k)
\end{equation}
where $\epsilon_1= |x_1|'+ \cdots + |x_{i-1}|'$.
\end{definition}
An element $I \in C^0 = C^{-1}[1]$ is called a unit if
\begin{equation}\label{unit}
\begin{cases}
 m_{k+1}(x_1,\cdots,I,\cdots,x_k) = 0 \;\; \textrm{for} \; k \neq 2 \\
m_2(I,x) = (-1)^{deg \, x} m_2(x,I) = x.
\end{cases}
\end{equation}

We recall the notion of an $\AI$-homomorphism between two $\AI$-algebras.
The family of maps of degree 0
$$f_k : B_k(C_1) \to C_2[1] \;\;\textrm{for} \; k =1,2,\cdots $$
induce the coalgebra map
$\HH{f}:BC_1 \to BC_2$, which for $x_1 \otimes  \cdots \otimes x_k \in B_k C_1$ is defined
by the formula
$$\HH{f}(x_1\otimes \cdots \otimes x_k) = \sum_{1 \leq k_1 \leq \cdots \leq k_n \leq k}
f_{k_1}(x_1,\cdots,x_{k_1})\otimes \cdots \otimes f_{k-k_n }(x_{k_n+1},\cdots,x_{k}).$$
The map $\HH{f}$ is called an $\AI$-homomorphism if  
$$\HH{d} \circ \HH{f} = \HH{f} \circ \HH{d}.$$

An $\AI$-algebra $(C,\{m_k\})$ is called {\em compact} if $H^\bullet(C,m_1)$ is finite dimensional
and is called {\em minimal} if $m_1 \equiv 0$.
\subsection{Formal manifolds}
\label{sec:noncomm}
We recall the language of formal non-commutative geometry mainly from \cite{KS}, which provides more geometric point of view of the related homotopy theories. For a more systematic exposition, we refer readers to Kontsevich and
Soibelman(\cite{KS}), Kajiura \cite{Kaj} or  Hamilton and Lazarev \cite{HL}.

We restrict ourselves to $\AI$-algebras on a finite dimensional vector space $V$ over a field $\kk$.
(To prove the main theorem for compact $\AI$-algebras, we will pull-back all the related notions
to $H^\bullet(C,m_1)$ which is finite dimensional).
We choose a basis of $V$ and denote it as $\{e_1,\cdots,e_n\}$. Consider the dual space $V^*=Hom(V,\kk)$ and denote the dual basis as $x_1,\cdots,x_n$ whose degrees are  given as $|x_i|'=-|e_i|'$.

As a $\AI$-algebra is given by coalgebra with codifferential, its dual becomes (non-commutative) differential graded algebra (DGA) or a formal manifold in the language of Kontsevich and Soibelman \cite{KS}. Namely, the dual of the coalgebra,  $(BC)^*$ is DGA. To have a unit,
actually, one should work with an augmented bar complex 
$BC^+ = BC \oplus \kk$ with the comultiplication $\Delta:BC^+ \to BC^+ \otimes BC^+$
defined as in (\ref{comul}) with the sum from $i=0$ to $i=n$. 
Then, the dual space  $(BC^+)^*$ may be considered as a function ring $\mathcal{O}(X)$ such that $$\CO (X) = \kk \ll x_1,...,x_n \gg.$$
Namely, it is a \emph{noncommutative} formal power series ring, regarded as the ring of regular functions on the formal non-commutative manifold $X$ 

Dual of the codifferential $\HH{d}$ becomes a differential of the DGA, which may be considered as a vector field on $X$.
The vector field $Q$ on $X$ corresponding $\HH{d}$ may be defined by
$$Q := \sum_{k >0} a_i^{j_1,\cdots,j_{k}} x_{j_{1}} x_{j_{2}} \cdots x_{j_k} \frac{\partial}{\partial x_i}$$
where the coefficients $a$'s are defined  by the $\AI$-operations
$$m(e_{j_1},\cdots,e_{j_k}) = \sum a_i^{j_1,\cdots,j_k} e_i.$$
Note that $Q$ may be an infinite sum and is regarded as a formal vector field.

The $\AI$-equation $\HH{d} \circ \HH{d} = \frac{1}{2}[\HH{d}, \HH{d}] = 0$ implies the relation $[Q,Q]=0$ or the following identities between the coefficients of $Q$ for each $s$
$$0= \sum_{\stackrel{k_1+k_2 =k+1}{j,l}} (-1)^{\E_1}a_s^{i_1,\cdots,i_{j-1},l,i_{j+k_2},\cdots,i_k} \cdot a_l^{i_j,\cdots,i_{j+k_2-1}}$$

Here $\PD{i}$ acts on $\mathcal{O}(X)$ in a natural way:
for example, (assume each of $k(x),f(x)$ and $g(x)$ has homogeneous degree)
$$k(x) \PD{1} (f(x)g(x)) =
 k(x)\PD{1}(f(x)) \cdot g(x) + (-1)^{(|g(x)|')|f(x)|'} k(x)\PD{1}(g(x)) \cdot f(x).$$
If we set $|\frac{\partial}{\partial x_i}|' = -|x_i|'$, then one may check that $\AI$-equation corresponds to $[Q,Q]= 0$
or more precisely, for any $f(x)$, we have
$$[Q,Q](f) = Q(Q(f)) -(-1)^{(1\cdot 1)} Q(Q(f)) = 2Q(Q(f))=0.$$

Hence, the structure of an $\AI$-algebra on $V$ is equivalent to the non-commutative (pointed) formal manifold $X$
equipped with a vector field $Q$ with $[Q,Q]=0$. Here pointed means that
we consider the case of $m_0=0$ or in another way, we consider formal series with no constant term.

A cohomomorphism between $\AI$-algebras naturally corresponds to the algebra homomorphism compatible with derivations. 
Namely for two $\AI$-algebras $(A, m^A_*),(B,m^B_*)$ and 
a $\AI$-homomorphism $h:B \to A$, the formal change of coordinate of the dual variables are given as follows.
We assume $B$ is finite dimensional as a vector space, and denote by $\{f_*\}$ its basis, and 
introduce corresponding formal variables $y_*$ as before.
Suppose 
$$h_{k}(f_{j_1},\cdots,f_{j_k}) = h^i_{j_1,\cdots,j_k} e_i, \;\;\; h^i_{j_1,\cdots,j_k} \in R.$$
Then, algebra homomorphism is defined by changing each variable as 
\begin{equation}\label{changeco}
	x_i \mapsto h^i_{j_{11}} y_{j_{11}} + h^i_{j_{21},j_{22}} y_{j_{21}} y_{j_{22}} + \cdots +
	h^{i}_{j_{l1},\cdots,j_{lk}} y_{j_{l1}}\cdots y_{j_{lk}} + \cdots.
\end{equation}
We refer readers to \cite{Kaj} for
detailed explanation on this point.

\subsection{Non-commutative De Rham theory}
There is the non-commutative version of de Rham (or Karoubi) theory (see for example \cite{KS}).
The main difference from the commutative case is that
 the space $X$ where the differential forms should live, does not really exist, and
 the right de Rham complex in the non-commutative case is the cyclic de Rham complex.
 
First, one may introduce the de Rham forms as follows.
Consider $\CO(T[1]X):=\kk \ll x_i,dx_i \gg$, where $dx_i$ are another formal variables such that $|dx_i|=|x_i|$.
There are additional signs when dealing with these forms or vector fields, which we follow the definition in \cite{Kaj}.
First denote $$\sharp(dx_i)=1, \sharp(x_i)=0, \sharp (\PD{i}) = -1 $$
and in general, by denoting $ x_i$ or $dx_i$ by $\phi$,
one defines $$ \sharp (\phi^1 \cdots \phi^k) = \sum_{j=1}^k \sharp(\phi^j).$$
And the Koszul rule in this case is given by considering the sign $\sharp$ and $|\cdot|'$ separately.
For example, graded commutator is defined (for homogeneous elements) by
$$[f(\phi),g(\phi)] = f(\phi)g(\phi) - (-1)^{\big(|f(\phi)|'|g(\phi)|' + \sharp(f(\phi))\sharp(g(\phi))\big)} g(\phi)f(\phi).$$

Cyclic functions are defined by the quotient
$$\Omega^0_{cyc}(X)= \CO (X) / [\CO (X),\CO (X)]_{top}.$$
Here one takes the closure of algebraic commutator in the adic topology.
The space of \textit{cyclic differential forms} on $X$ is defined similarly by
$$\Omega_{cyc}(X)= \CO (T[1]X) / [\CO (T[1]X),\CO (T[1]X)]_{top}.$$
Cyclic non-commutative one forms on $X$
, $\Omega^1_{cyc}(X)$ is  generated by expressions as $x_{i_1} \cdots x_{i_k}dx_{i_{k+1}}$,
where by cyclic rotation, $dx_*$ may be regarded as being in the last slot. But in general,
cyclic 2-form is generated by equivalence classes of elements like
$$x_{i_1}\cdots x_{i_p} dx_{a} x_{j_1} \cdots x_{j_q} dx_{b} x_{k_1}\cdots x_{k_r}.$$
Hence it is easy to see that $\Omega^s_{cyc}(X)$ does not vanish for $s > dim(V)$.
The usual de Rham differential $d$ descends to the quotient $\Omega_{cyc}(X)$ and
we denote it as $d_{cycl}$ as in \cite{KS}. $(\Omega_{cyc}(X),d_{cycl})$ is the
non-commutative de Rham complex.

Furthermore, it is well-known that the \textit{contraction map}(or \textit{interior product}) $i_{\xi}:\CO (T[1]X) \to \CO (T[1]X)$ can be defined by $i_{\xi}(f)=0$, $i_{\xi} (df) = \xi(f)$ for all $f \in \CO (T[1]X)$.
Now, one defines the Lie derivative
$$\CL _{\xi} = [d,i_{\xi}] = d \circ i_{\xi} + i_{\xi} \circ d.$$
As $\CL_{\xi}$ is also a derivation, we have for any $f(\phi),g(\phi) \in   \CO (T[1]X)$,
$$\CL_{\xi}([f(\phi),g(\phi)]) \subset [\CO (T[1]X),\CO (T[1]X)].$$
Hence, $\CL_{\xi}$ is well-defined on cyclic forms $\Omega_{cyc}(T[1]X)$.
Like the standard differential calculus, the following holds true also for the non-commutative de Rham complex.
$$ [d,d]=0, [d,\CL _{\xi}]=0,$$
$$[\CL _{\xi}, i_{\eta}] = i_{[\xi,\eta]}, [\CL_{\xi},\CL_{\eta}]=\CL_{[\xi,\eta]},[i_{\xi},i_{\eta}]=0.$$

We mention two of the well-known theorems. The first one is 
\begin{theorem}[Poincar\'e lemma] The cohomology of $(\Omega_{cyc}(X),d)$ is trivial.
\end{theorem}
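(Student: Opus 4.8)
The plan is to reproduce the classical proof of the Poincar\'e lemma through a radial (Euler) vector field and Cartan's homotopy formula, with \emph{pointedness} of $X$ playing the role of star-shapedness. First I would introduce the Euler vector field $E=\sum_i x_i \frac{\partial}{\partial x_i}$, for which $i_E(x_i)=0$ and $i_E(dx_i)=E(x_i)=x_i$. Using the Lie derivative $\CL_E=[d,i_E]=d\circ i_E+i_E\circ d$ already recorded above, one computes on generators $\CL_E(x_i)=i_E(dx_i)=x_i$ and $\CL_E(dx_i)=d(i_E(dx_i))=dx_i$. Since $\CL_E$ has $\sharp$-degree $\sharp(d)+\sharp(i_E)=0$ and $|\CL_E|'=0$, the graded Leibniz rule for it carries no signs, so on any monomial that is a product of $k$ letters from $\{x_i,dx_i\}$ it acts as multiplication by the total weight $k$. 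Both $d$ and $i_E$ preserve this weight, as each merely exchanges one $x_i$ with one $dx_i$ in a single slot (while $d$ raises and $i_E$ lowers the form degree).

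Because $X$ is pointed, $\CO(T[1]X)$ and hence $\Omega_{cyc}(X)$ contain no constant term, so every nonzero homogeneous element has weight $k\geq 1$. The complex therefore splits as a direct sum over weights $k\geq 1$, each piece being a finite complex on which $\CL_E$ is the scalar $k$. Since $\kk\supseteq\QQ$, I define the homotopy operator $h$ on the weight-$k$ component by $h=\tfrac{1}{k}\,i_E$. On that component,
$$
d\circ h+h\circ d=\tfrac{1}{k}\big(d\circ i_E+i_E\circ d\big)=\tfrac{1}{k}\,\CL_E=\mathrm{id}.
$$
Consequently every $d$-closed form $\omega$ of weight $k$ satisfies $\omega=d(h\omega)$ and is exact; in form degree $0$ this also forces the only closed element to be $0$, since $i_E$ annihilates $0$-forms. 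Thus the cohomology vanishes in every degree. Note that this argument uses only the weight-grading and never requires an identity such as $[i_E,\CL_E]=0$, so no sign subtleties from the self-bracket of $E$ enter.

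It remains to check that the whole construction descends to the cyclic quotient $\Omega_{cyc}(X)=\CO(T[1]X)/[\CO(T[1]X),\CO(T[1]X)]_{top}$. The differential $d$ (denoted $d_{cycl}$ there) and the Lie derivative $\CL_E$ are already known to act on $\Omega_{cyc}(X)$ from the discussion preceding the statement. Since $i_E$ is a graded derivation, one has $i_E([f,g])\subseteq[\CO(T[1]X),\CO(T[1]X)]$, so $i_E$ preserves the commutator subspace, and therefore so does $h=\tfrac1k i_E$. The homotopy identity $d\circ h+h\circ d=\mathrm{id}$ then holds weight-by-weight on $\Omega_{cyc}(X)$, which proves triviality of the cohomology of $(\Omega_{cyc}(X),d)$.

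The main points demanding care are twofold: confirming, through the separate $\sharp$- and $|\cdot|'$-gradings and their Koszul signs, that $\CL_E$ acts as the \emph{undecorated} scalar $k$ on each weight-$k$ monomial, so that nothing obstructs its invertibility in characteristic $0$; and verifying that $i_E$ respects the \emph{topological} closure $[\,\cdot\,,\,\cdot\,]_{top}$ of the commutator subspace, so that $h$ is genuinely well-defined on the completed cyclic complex. Both reduce to the fact that everything is defined and invertible one weight at a time, where each graded piece is of finite type; once this is settled the argument is exactly the standard radial contraction.
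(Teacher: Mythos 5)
Your proof is correct and is essentially the paper's own argument in disguise: the explicit contracting homotopy $H$ used in the paper (following Kajiura), which replaces one $dx_{i_j}$ by $x_{i_j}$ with the Koszul sign $(-1)^{\sharp_{i_1}+\cdots+\sharp_{i_{j-1}}}$ and carries the normalization $\tfrac{1}{k}$ on weight-$k$ monomials, is precisely your operator $h=\tfrac{1}{k}\,i_E$ for the Euler field $E=\sum_i x_i\frac{\partial}{\partial x_i}$. Your packaging via $\CL_E=[d,i_E]$ acting as the scalar $k$ on each weight component just makes the identity $dH+Hd=\mathrm{Id}$ conceptual rather than a direct check, and your added verification that $i_E$ descends to the cyclic quotient is a point the paper leaves implicit.
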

\begin{proof}
We follow Lemma 4.8 of \cite{Kaj}. One can define the explicit contracting homotopy $H$ satisfying $dH + Hd= Id$ as follows.
Denote an element of $\Omega_{cyc}(X)$ as $a = \frac{1}{k}a_{i_1 \cdots i_k}\phi_{i_1}\cdots \phi_{i_k}$
where $\phi_{i_j}=x_{i_j}$ or $\phi_{i_j}=dx_{i_j}$.
Then, $H$ is defined by
$$H(a) = \sum_{i}(-1)^{\sharp_{i_1} + \cdots \sharp_{i_{j-1}}} \frac{1}{k}a_{i_1 \cdots i_k}\phi_{i_1}\cdots (H (\phi_{i_j})) \cdot \phi_{i_k}. $$
where $H(x_{i_j})=0$ and $H(dx_{i_j}) =x_{i_j}$.
\end{proof}
\begin{theorem}(Darboux theorem)
Any symplectic form on a formal noncommutative manifold can be transformed to the constant (coefficient) symplectic form by a coordinate transformation.
\end{theorem}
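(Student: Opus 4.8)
The plan is to adapt the Moser deformation argument (the standard proof of the classical Darboux theorem) to the noncommutative formal setting, using the Poincar\'e lemma and the Cartan calculus established above. Write $\omega_0$ for the constant-coefficient part of the given symplectic form $\omega$, that is, the part built only from the $dx_i\, dx_j$ with $\kk$-coefficients, and set $\omega_t = \omega_0 + t(\omega - \omega_0)$ for $t \in [0,1]$. Then $\omega_t$ interpolates between $\omega_0$ and $\omega$, every $\omega_t$ is closed (being a combination of closed forms), and since $\omega_t$ agrees with $\omega$ in its leading term, each $\omega_t$ is again nondegenerate. I will produce a one-parameter family of formal coordinate transformations $\phi_t$ with $\phi_0 = \mathrm{id}$ and $\phi_t^* \omega_t = \omega_0$; the time-one map $\phi_1$ then carries $\omega$ to the constant form $\omega_0$.

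Next I would construct the generating time-dependent vector field. The difference $\omega - \omega_0$ is closed and has only terms of positive order in the $x_i$; applying the explicit contracting homotopy $H$ from the Poincar\'e lemma yields a cyclic $1$-form $\beta = H(\omega - \omega_0)$ with $d\beta = \omega - \omega_0$, again of positive order. Differentiating the desired relation $\phi_t^* \omega_t = \omega_0$ in $t$ and using that the flow of $\xi_t$ satisfies $\frac{d}{dt}\phi_t^*\omega_t = \phi_t^*\big(\CL_{\xi_t}\omega_t + \frac{d}{dt}\omega_t\big)$, it suffices to arrange $\CL_{\xi_t}\omega_t + (\omega - \omega_0) = 0$. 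Because $\omega_t$ is closed, Cartan's formula $\CL_{\xi_t} = d\, i_{\xi_t} + i_{\xi_t} d$ gives $\CL_{\xi_t}\omega_t = d\, i_{\xi_t}\omega_t$, so it is enough to solve the purely algebraic equation $i_{\xi_t}\omega_t = -\beta$.

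This algebraic equation is solvable for $\xi_t$ precisely by nondegeneracy: the assignment $\xi \mapsto i_\xi \omega_t$ is a map from vector fields to $1$-forms whose leading term is contraction against the constant nondegenerate form $\omega_0$, which is invertible; hence the map is invertible and $\xi_t = -(i_{(-)}\omega_t)^{-1}(\beta)$ is determined order by order. Since $\beta$ has positive order and $\omega_t$ has invertible constant leading part, $\xi_t$ carries no constant term, so it vanishes at the base point and integrates to a well-defined formal (pointed) coordinate transformation $\phi_t$; setting $t=1$ completes the argument.

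The main obstacle, and the point requiring the most care in the noncommutative case, is the precise meaning of nondegeneracy and of the inversion $(i_{(-)}\omega_t)^{-1}$ for a cyclic $2$-form: unlike the commutative case, contraction and the vector-field/$1$-form duality interact subtly with the cyclic quotient $\Omega_{cyc}(X)$, so one must check that the Cartan identities listed above genuinely descend to cyclic forms (which they do, since $\CL_\xi$ and $i_\xi$ preserve commutators) and that the order-by-order inversion produces coefficients in $\CO(X)$ respecting the cyclic structure. One must also verify that the formal flow of the time-dependent vector field $\xi_t$ converges as an automorphism of $\CO(X)$, which follows from $\xi_t$ vanishing at the origin so that each homogeneous component of $\phi_t$ is determined by finitely many lower-order terms.
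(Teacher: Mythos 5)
Your Moser-type argument is correct, but it is a genuinely different route from the one the paper takes. The paper defers the unfiltered proof to \cite{G} and \cite{Kaj} and, in Theorem \ref{darboux}, proves the filtered version by a discrete, order-by-order normalization: one inducts on the word-length (plus energy level) $N$, notes that the lowest nonconstant homogeneous piece $\omega_N$ of $\omega$ is $d_{cycl}$-closed and hence $d_{cycl}$-exact by the Poincar\'e lemma, and then kills it by a single polynomial coordinate change $x^i \mapsto x^i + f^i$ using the invertibility of the constant matrix $\omega_{ij}$; the Darboux transformation is the infinite composition of these elementary changes. Your proof is the continuous counterpart: the same two inputs (Poincar\'e lemma to produce a primitive, nondegeneracy of the constant part to invert $i_{(-)}\omega_t$ order by order) are packaged into one time-dependent flow. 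What your version buys is a single, conceptually transparent automorphism $\phi_1$ and a proof that visibly parallels the classical case; what the paper's version buys is that it avoids having to justify the identity $\frac{d}{dt}\phi_t^*\omega_t = \phi_t^*(\CL_{\xi_t}\omega_t + \dot\omega_t)$ and the integration of a time-dependent formal flow in the noncommutative cyclic setting (which you correctly flag as the delicate points, and which do require an argument, e.g.\ that each homogeneous component of $\phi_t$ solves a finite triangular system over $\kk \supseteq \QQ$), and it adapts directly to the gapped filtered case of Section \ref{sec:filter}, where the induction runs over order plus energy and where the theorem actually fails for symplectic forms with negative-energy terms --- a constraint your interpolation $\omega_t$ would also have to respect there, since $\xi_t$ must not decrease energy for the flow to exist as a filtered automorphism.
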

We refer readers to \cite{G} or \cite{Kaj} or the Theorem \ref{darboux} for its proof in the filtered case.

\subsection{Hochschild (co)homology for $\AI$-algebras}

Hochschild homology of an $\AI$-algebra $A=(C,m)$ as an $\AI$-module over itself is defined as follows. Denote
\begin{equation}\label{def:hh}
C^k(A,A) = C \otimes C[1]^{\otimes k},
\end{equation}
and its degree $\bullet$ part by $C^k_\bullet(A,A)$. We define the Hochschild chain complex
\begin{equation}\label{def:hochchain}
(C_{\bullet}(A,A),b) = \big( \oplus_{k \geq 0} C^k_\bullet(A,A),b \big),
\end{equation}
where the degree one differential $b$ is defined as follows: for $v \in A$ and $x_i \in A$,
$$b (\UL{v} \otimes x_1 \otimes \cdots \otimes x_k)
= \sum_{\stackrel{0 \leq j \leq k+1 -i}{1 \leq i}} (-1)^{\E_1} \UL{v} \otimes \cdots \otimes x_{i-1} \otimes m_j(x_i,\cdots,x_{i+j-1}) \otimes \cdots \otimes x_k$$
\begin{equation}\label{eq2}
 + \sum_{\stackrel{0 \leq i, j \leq k}{ i+j \leq k}} (-1)^{\E_2} \UL{m_{i+j+1}\big(x_{k-i+1},\cdots,x_k,v, x_1,\cdots,x_{j} \big)} \otimes x_{j+1} \otimes \cdots
\otimes x_{k-i}
\end{equation}
We underlined the module element and the signs follow the Koszul sign convention: $$\E_1 = |v|' + |x_1|' + \cdots + |x_{i-1}|',\;
\E_2 = \big(\sum_{s=1}^i |x_{k-i+s}|'\big)\big( |v|' + \sum_{t=1}^j |x_{t}|'\big).$$

Similarly, one can define the reduced Hochschild homology by considering $C^{red}:=C/R \cdot I$ and
set $C^k_{red}(A,A) = C \otimes (C^{red}[1])^{\otimes k}$ instead, and the resulting homology is isomorphic to the standard one.

The cochain complex obtained by taking a dual of the reduced Hochschild chain complex, $\big((C_{\bullet}^{red}(A,A))^*,b^*\big)$ defines the Hochschild cohomology $H^\bullet_{red}(A,A^*)$. Here, cochain elements are given by the maps $\{f_n:(C^{red}[1])^{\OT n} \to C^*\}$ and the degree one differential $b^*$ is given by
$$b^*f(a_1,...,a_n)=\sum (-1)^{Kos} f(a_1,...,m_k(...),...,a_n)+\sum (-1)^{Kos} d^*(a_1,...,\UL{f(...)},...,a_n),$$
Here $A^*$ has a canonical $\AI$-bimodule structure over $A$ whose bimodule structure is given by $d^*$ in \ref{def3}.

\subsection{(Negative) cyclic cohomology for $\AI$-algebras}
We recall the definition of the cyclic and negative cyclic cohomology of a unital $\AI$-algebra.
(For weakly unital case, see for example, \cite{HL} or \cite{C1})
In the case of $\AI$-algebras, there exist an Tsygan's bicomplex, and also $(b,B)$-complex defining cyclic homology. Cyclic and negative cyclic cochain complex can be obtained by taking the
dual of the $(b,B)$-complex in the following way.

Consider the following unbounded $(b^*,B^*)$ complex.
 \begin{equation} \label{diag:periodic}
  \xymatrix{
  \hspace{1cm} & \hspace{1cm} & \hspace{1cm} & \hspace{1cm} \\
    \; \ar[r]^{B^*} & C^{2}_{red}(A,A^*) \ar[r]^{B^*} \ar[u]^{b^*} & C^{1}_{red}(A,A^*) \ar[r]^{B^*} \ar[u]^{b^*} & C^{0}_{red}(A,A^*) \ar[r]^{B^*} \ar[u]^{b^*} & \; \\
  \; \ar[r]^{B^*} & C^{1}_{red}(A,A^*) \ar[r]^{B^*} \ar[u]^{b^*} & C^{0}_{red}(A,A^*) \ar[r]^{B^*} \ar[u]^{b^*} & C^{-1}_{red}(A,A^*) \ar[r]^{B^*} \ar[u]^{b^*} & \; \\
  \; \ar[r]^{B^*} & C^{0}_{red}(A,A^*) \ar[r]^{B^*} \ar[u]^{b^*} & C^{-1}_{red}(A,A^*) \ar[r]^{B^*} \ar[u]^{b^*} & C^{-2}_{red}(A,A^*) \ar[r]^{B^*} \ar[u]^{b^*} & \; \\
  \hspace{1cm} & \hspace{1cm} \ar[u]^{b^*}  &  \hspace{1cm} \ar[u]^{b^*}  &  \hspace{1cm} \ar[u]^{b^*} \\
  }
\end{equation}
Here $(C^{\bullet}_{red}(A,A^*),b^*)$ is the dual of reduced Hochschild cochain complex in the previous subsection, and
the dual of the Conne-Tsygan's operator is given by
$$B^*f(a_1,...,a_n)=\sum_{i}f(a_i,a_{i+1},...,a_{i-1})(I).$$

The double complex $CC^{\bullet}_{-}(A,A^*)$ which consists of only negative columns of (\ref{diag:periodic}) defines the 
\textit{negative cyclic cohomology} of an $\AI$-algebra $A$, which we denote by $HC^\bullet_{-}(A)$ . Here, elements in the line parallel to $y=-x$ line has the same total degree of the double complex, and we consider the direct sums instead of the direct products so that
its dualization would give the negative cyclic homology which is given by the direct products (with suitable finiteness assumption).

The double complex $CC^{\bullet}(A,A^*)$ which consists of only nonnegative columns of (\ref{diag:periodic}) defines the cyclic cohomology of the $\AI$-algebra $A$, which we denote by $HC^\bullet(A)$. Here we use the direct products instead of direct sums so that the dualization of the
cyclic homology would give cyclic cohomology.

The double complex $CP^{\bullet}(A,A^*)$ which consists of all columns of (\ref{diag:periodic}) defines the periodic cyclic cohomology of the $\AI$-algebra $A$, which we denote by $HP^\bullet(A)$. Here we use the direct products.

As usual, one obtains the following spectral sequence of these three homology theories given by the inclusion of
$CC^{\bullet}(A,A^*)$ to $CP^{\bullet}(A,A^*)$:
\begin{equation}\label{eq:les}
\cdots \to HC^n(A) \to HP^n(A) \to HC^n_{-}(A) \to HC^{n+1}(A) \to \cdots 
\end{equation}
Here the map $HC^n_{-}(A) \to HC^{n+1}(A)$ is induced by $B^*$

These homology theories for arbitrary $\AI$-algebra can be difficult to deal with, and we are mainly interested in the case that the $\AI$-algebra
$A=(C,\{m_k\})$ satisfies either $C^{> 0} \equiv 0$ or $C^{<0} \equiv 0$ before shifting degrees.
We remark that the usual (non-graded) algebras may be considered as $\AI$-algebras and  after degree shifting, all elements have degree $-1$.
In this case the Hochschild complex $C_{\geq 0}(A,A) \equiv 0$ for degree reasons. The examples from geometry, for example the usual de Rham complex, has degree from $0$ to $N$.  In particular, if we assume that $C^0$ is generated by the unit (in cohomology), then it is easy to that 
the Hochschild cochain complex satisfies $C^{>1}_{red}(A,A^*) \equiv 0$. We also remark that by the standard spectral sequence arguments,
homotopy equivalent $\AI$-algebras has isomorphic Hochschild (co)homology classes. 
As we have used direct sums to define negative cyclic cohomology, the usual 
usual invariant, coinvariant relation gives rise to the following lemma:
\begin{lemma}[\cite{H} Lemma 3.6]\label{hchc}
 Let $(A,m)$ be a weakly unital $\AI$-algebra for which there exists an integer $N$ such that $H^k(V,V^*) =0$ for $k>N$. Then, for any integer $n$, we have 
$$HC^n_{-}(A) \cong HC^{n+1}(A).$$
\end{lemma}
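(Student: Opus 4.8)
The plan is to deduce the isomorphism directly from the long exact sequence (\ref{eq:les}), whose connecting homomorphism $HC^n_-(A)\to HC^{n+1}(A)$ is, as noted there, the map induced by $B^*$. Recall that (\ref{eq:les}) arises from the short exact sequence of total complexes $0\to CC^\bullet(A,A^*)\to CP^\bullet(A,A^*)\to CP^\bullet(A,A^*)/CC^\bullet(A,A^*)\to 0$ associated to the inclusion of the nonnegative columns of (\ref{diag:periodic}) into all of its columns. The first point to settle is the identification of the quotient, which a priori is a direct product over the negative columns, with the negative cyclic complex $CC^\bullet_-(A,A^*)$, which by convention is built from direct sums. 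I would establish this identification as a quasi-isomorphism; once it holds, the connecting map of the short exact sequence becomes exactly the asserted arrow $HC^n_-(A)\to HC^{n+1}(A)$.

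First I would filter all three complexes by columns and pass to the spectral sequences whose $E_1$-pages are obtained by taking $b^*$-cohomology columnwise. This replaces each column by a copy of the reduced Hochschild cohomology $H^\bullet(V,V^*)$, the horizontal differential being the operator $\bar B$ induced by $B^*$. The hypothesis $H^k(V,V^*)=0$ for $k>N$ enters here: along each line of constant total degree in (\ref{diag:periodic}), the columns running off toward the negative side carry Hochschild cohomology in arbitrarily high internal degree, which vanishes, so only finitely many $E_1$-terms survive on that side. Consequently, in each fixed total degree the direct sum defining $CC^\bullet_-$ and the direct product appearing in $CP^\bullet/CC^\bullet$ have the same finite $E_1$, the inclusion $CC^\bullet_-\hookrightarrow CP^\bullet/CC^\bullet$ is a quasi-isomorphism, and both spectral sequences converge. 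This legitimizes (\ref{eq:les}) and pins down the connecting map as $B^*$.

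It then remains to prove that this connecting map is an isomorphism for every $n$. By exactness of (\ref{eq:les}) this is equivalent to the vanishing of the periodic cyclic cohomology $HP^\bullet(A)$, since the connecting map is flanked in (\ref{eq:les}) by $HP^\bullet(A)$ on both sides. At this step I would use the standing assumption $\QQ\subseteq\kk$: the cyclic group actions that move between the columns of (\ref{diag:periodic}) have invertible order, so averaging identifies invariants with coinvariants, matching the product/limit description underlying $HC^\bullet$ with the sum/colimit description underlying $HC^\bullet_-$. Combined with the finiteness secured above, which makes these limits and colimits stabilize in each total degree, this identifies the negative-column $E_1$-complex with a single-column shift of the nonnegative one; on cohomology the shift is precisely $n\mapsto n+1$. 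Tracing this back through the spectral sequences yields $HC^n_-(A)\cong HC^{n+1}(A)$.

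The step I expect to be the main obstacle is exactly the conversion of boundedness of $H^\bullet(V,V^*)$ into the sum-versus-product (colimit-versus-limit) identification in each total degree, together with the convergence of the two spectral sequences. This is where both hypotheses are genuinely used: $\mathrm{char}(\kk)=0$ so that invariants agree with coinvariants, and the bound $k>N$ so that only finitely many Hochschild groups contribute in each total degree and the direct-sum convention for $CC^\bullet_-$ loses no information. By comparison, getting the signs and the exact column shift correct in the connecting homomorphism is a delicate but essentially routine verification.
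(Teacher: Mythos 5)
The paper itself does not prove this lemma: it is quoted from Hamilton (\cite{H}, Lemma 3.6), and the only indications given are that the isomorphism is the connecting map of the long exact sequence (\ref{eq:les}) and that the statement rests on ``the usual invariant, coinvariant relation'' together with the direct-sum convention used for $CC^{\bullet}_{-}$. Your skeleton is consistent with that indication and is the natural one: legitimize (\ref{eq:les}) by identifying the quotient $CP^\bullet/CC^\bullet$ (a product over the negative columns) with $CC^\bullet_-$ (a sum), observe that the connecting map $HC^n_-(A)\to HC^{n+1}(A)$ being an isomorphism for every $n$ is equivalent, by exactness, to $HP^\bullet(A)=0$, and then kill $HP^\bullet(A)$. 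The first step, and your observation that the hypothesis $H^k(V,V^*)=0$ for $k>N$ is exactly what makes only finitely many $E_1$-terms survive on the negative-column side of each constant-total-degree line (so that sum equals product there), are correct and worth spelling out.

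The genuine gap is in the last step: you never actually prove $HP^\bullet(A)=0$. The paragraph meant to do this claims that averaging over cyclic groups ``identifies the negative-column $E_1$-complex with a single-column shift of the nonnegative one.'' But the two halves of (\ref{diag:periodic}) are truncations of the periodic bicomplex on \emph{opposite} sides and are not shifts of one another, and the invariants-equal-coinvariants identification (which in characteristic zero relates the $(b^*,B^*)$-bicomplex to Connes' $\lambda$-complex) does not produce such an isomorphism; moreover a one-column shift of the bicomplex changes total degree by $2$ (this is the periodicity operator $S$), not by $1$, so even the degree count in your conclusion does not match the asserted $n\mapsto n+1$ (which in fact comes from the connecting homomorphism of the short exact sequence, not from a column shift). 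What is actually required is a proof that the full product total complex $CP^\bullet(A,A^*)$ is acyclic. One standard route is to show that the boundedness hypothesis forces the cyclic (or negative cyclic) cohomology to vanish in all sufficiently large total degrees and then to compute $HP^n(A)$ as the derived inverse limit of $HC^{n+2k}(A)$ along $S$, checking that the $\varprojlim^1$-term also vanishes; alternatively one must establish convergence of the column-filtration spectral sequence for $CP^\bullet$ and show its abutment is zero. Neither is automatic here, because in each fixed total degree the $E_1$-row is bounded only on the side where the internal Hochschild degree grows, and infinitely many columns can survive on the other side, so ``both spectral sequences converge'' is precisely the point that needs an argument rather than an assertion. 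Until this is supplied, your proposal establishes the exact sequence but not the isomorphism.
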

The equivalence is given by the map in the long exact sequence (\ref{eq:les}), and this is also the relation between
the cohomology classes used in Theorem \ref{thm:KS} and Theorem \ref{thm}.

\section{Cyclic $\AI$-algebras and its homotopy notions}
The cyclic structure was first considered by Kontsevich \cite{K} as a symplectic form on the non-commutative formal manifolds (see section \ref{sec:noncomm}).
There are somewhat different sign conventions whether one is working with degree shifting or not and
we refer readers to \cite{C} for detailed explanation on this point.
\subsection{Cyclic $\AI$-algebras}
\begin{definition}
An $\AI$-algebra $(C,\{m_*\})$ is said to have a {\it cyclic symmetric} inner product if
there exists a skew-symmetric non-degenerate, bilinear map $$<,> : C[1] \otimes C[1] \to R,$$
such that for all integer $k \geq 1$,
\begin{equation}\label{cyeqn}
    <m_{k,\beta}(x_1,\cdots,x_k),x_{k+1}> = (-1)^{K}<m_{k,\beta}(x_2,\cdots,x_{k+1}),x_{1}>.
\end{equation}
 where $K = |x_1|'(|x_2|' + \cdots +|x_{k+1}|')$.
 For short, we will call such an algebra, cyclic $\AI$-algebra.
\end{definition}
There is a notion of cyclic $\AI$-homomorphism due to Kajiura \cite{Kaj}
\begin{definition}
An $\AI$-homomorphism $\{h_k\}_{k\geq 1 }$ between two cyclic $\AI$-algebras is called
a cyclic $\AI$-homomorphism if
\begin{enumerate}
\item $h_1$ preserves inner product $<a,b> = <h_1(a),h_1(b)>$.
\item \begin{equation}
\sum_{i+j=k} <h_i(x_1,\cdots,x_i), h_j(x_{i+1},\cdots,x_k)> =0.
\end{equation}
\end{enumerate}
\end{definition}

\subsection{Strong homotopy inner products}
We recall the notion of a strong homotopy inner product of $\AI$-algebra $A$ from \cite{C} in this section.
(see also \cite{CL}).

For $\AI$-algebras, cyclicity is not preserved under quasi-isomorphism of an $\AI$-algebra and the homotopy cyclicity
has been defined in \cite{C}. Its characterization theorem and relation to non-commutative symplectic forms has been explored.
For such notions, it is useful to use $\AI$-bimodule maps from $A$ to its dual $A^*$ which was
called as infinity inner product in \cite{T}.

We first recall a canonical $\AI$-bimodule structure on the dual $A^*$ of an $\AI$-algebra $(A,m)$.
We refer readers to \cite{FOOO} for the definition of $\AI$-bimodule.
Recall that $\AI$-bimodule structure of $M$ over $A$ is described by sequence of maps of degree one
$$d_{k,l}:A[1]^{\otimes k} \otimes M[1] \otimes A[1]^{\otimes l} \to M[1].$$
For the case of $M=A[1]$, we may set $d_{k,l}= m_{k+l+1}$.
For the case of the dual $M = (A[1])^*$, we define the bimodule structure $d^*_{k,l}$ as follows
\begin{equation}\label{def3}
d^*_{k,l}(x_1,\cdots,x_k,v^*,x_{k+1},\cdots,x_{k+l}) (w) =
(-1)^{\epsilon} v^* \big(  m_{k+l+1} (x_{k+1},\cdots,x_{k+l},w,x_1,\cdots,x_k) \big),
\end{equation}
with $\epsilon = Kos = |v^*|' + (|x_1|'+ \cdots + |x_k|')(|v^*|' + |x_{k+1}| + \cdots + |x_{k+1}|' + |w|')$.

We recall the following lemma relating cyclic inner product and $\AI$-bimodule maps.
\begin{lemma}[\cite{C} Lemma 3.1]
Let $\psi$ be an $\AI$-bimodule homomorphism $\psi:A \to A^*$.
Define $$<a,b>=\psi_{0,0}(a)(b),$$
and suppose that $<,>$ is {\em non-degenerate}.
Then, it defines a cyclic symmetric inner product on $A$  if
\begin{enumerate}\label{cycprop}
\item $ \psi_{k,l} \equiv 0$ for $(k,l) \neq (0,0)$
\item $\psi_{0,0}(a)(b) = -(-1)^{|a|'|b|'}\psi_{0,0}(b)(a).$
\end{enumerate}
Conversely, any cyclic symmetric inner product $<,>$ on $A$ give rise to an $\AI$-bimodule map
$\psi:A \to A^*$ with (1) and (2).
\end{lemma}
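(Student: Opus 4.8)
The plan is to set up a dictionary between the two notions and to read off both conditions directly from the defining relation of an $\AI$-bimodule homomorphism \cite{FOOO}. Throughout I write $<a,b> := \psi_{0,0}(a)(b)$. Condition (2) is, tautologically, the skew-symmetry of $<,>$, so the entire content of the statement is the equivalence of the cyclicity relation (\ref{cyeqn}) with the bimodule-homomorphism relation under the hypothesis that $\psi_{k,l}\equiv 0$ for $(k,l)\neq(0,0)$.

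First I would write out the $\AI$-bimodule homomorphism equation for $\psi$ from $(A[1],\, d_{k,l}=m_{k+l+1})$ to $((A[1])^*,\, d^*_{k,l})$, component by component in the input shape $(p,q)$, that is, $p$ algebra entries, then the module entry $v$, then $q$ algebra entries. Imposing $\psi_{k,l}\equiv 0$ for $(k,l)\neq(0,0)$ annihilates almost every term: a purely-algebra $\AI$-operation $m$ among the left or right entries produces an algebra element sitting adjacent to the module slot, hence always leaves at least one algebra entry feeding $\psi$, forcing a $\psi_{k,l}$ with $(k,l)\neq(0,0)$, and drops out. The only survivors are (i) the source module action that absorbs $v$ together with \emph{all} remaining entries, yielding $\psi_{0,0}\big(m_{p+q+1}(x_1,\dots,x_p,v,x_{p+1},\dots,x_{p+q})\big)$, and (ii) the target action $d^*_{p,q}(x_1,\dots,x_p,\psi_{0,0}(v),x_{p+1},\dots,x_{p+q})$ on the output side (any $d^*$ absorbing fewer entries would again force a nonzero $(k,l)$). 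Thus the shape-$(p,q)$ relation collapses to an identity between these two expressions.

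Next I would evaluate this surviving identity on a test element $w\in A$ and expand the target term using the definition (\ref{def3}) of $d^*$. Taking $p=0$ already gives, after relabeling $v, x_1,\dots,x_q, w$ as $x_1,\dots,x_{k+1}$ with $k=q+1$, a relation of the form $<m_k(x_1,\dots,x_k),x_{k+1}> = \pm <x_1, m_k(x_2,\dots,x_{k+1})>$; combining it with the skew-symmetry (2) converts it into precisely (\ref{cyeqn}). This settles the forward direction. For the converse I would set $\psi_{0,0}(a):=<a,\cdot>$ and $\psi_{k,l}:=0$ otherwise; then (1) and (2) hold by construction, and it remains to verify that the collapsed shape-$(p,q)$ relation above holds for \emph{all} $(p,q)$. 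Since the argument $(x_1,\dots,x_p,v,x_{p+1},\dots,x_{p+q})$ of $m_{p+q+1}$ is a cyclic rotation of the one appearing inside $d^*$, this follows by iterating (\ref{cyeqn}) together with skew-symmetry to move $v$ and $w$ into the required positions.

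The step I expect to be the main obstacle is purely the sign bookkeeping: I must check that the Koszul sign $\epsilon$ in (\ref{def3}), the internal signs of the bimodule-homomorphism relation, and the shifted grading $|\cdot|'$ assemble to exactly the exponent $K=|x_1|'(|x_2|'+\cdots+|x_{k+1}|')$ of (\ref{cyeqn}). In the converse this is compounded by the need to iterate cyclicity $p$ times for a shape-$(p,q)$ relation and to confirm that the accumulated rotation signs telescope correctly. I would control this by reducing every shape to the $p=0$ case and tracking signs through a single rotation at a time, so that no global sign computation is ever required.
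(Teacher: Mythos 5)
Your proposal is correct and takes essentially the same route as the source: the paper states this lemma without proof (quoting it from \cite{C}, Lemma 3.1), and the proof there is precisely this unpacking of the $\AI$-bimodule-homomorphism equation under hypothesis (1), where the shape-$(0,q)$ component together with the skew-symmetry (2) yields the cyclicity relation (\ref{cyeqn}), and the general shape-$(p,q)$ component is recovered in the converse by iterating the single cyclic rotation. The only delicate point is, as you note, the Koszul sign bookkeeping against the sign $\epsilon$ in (\ref{def3}), and your plan of reducing to one rotation at a time is the standard way to handle it.
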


We also recall that $\AI$-homomorphism $f:A\to B$ can be also understood as an $\AI$-bimodule homomorphism $\widetilde{f}:A\to B$ over $(f,f)$.
In this case, $\widetilde{f}_{k,l}:A^{\otimes k} \otimes A \otimes A^{\otimes l} \to B $ is defined by $\widetilde{f}_{k,l} = f_{k+l+1}$.
One can check that $\widetilde{f}$ satisfies $ \widetilde{f} \circ \widehat{m}^A = m^B \circ \WH{\widetilde{f}}$.

Now, we recall the definition of strong homotopy inner product from \cite{CL} which is modified from \cite{C},
\begin{definition}\label{def:cyc2}
    Let $A$ be an $A_{\infty}$-algebra. We call an $A_{\infty}$-bimodule map $\phi:A \rightarrow A^*$ a
    {\textbf{strong homotopy inner product}} if  it satisfies the following properties.
    \begin{enumerate}
    \item (Skew symmetry) $\phi_{k,l}(\vec{a},\UL{v},\vec{b})(w)=- (-1)^{Kos}\phi_{l,k}(\vec{b},\UL{w},\vec{a})(v)$.
    \item (Closedness) for any choice of a family $(a_1,...,a_{l+1})$ and any choice of indices $1 \leq i<j<k \leq l+1$, we have $$\phi(...,\UL{a_i},...)(a_j)+(-1)^{Kos}\phi(...,\UL{a_j},...)(a_k)+(-1)^{Kos}\phi(...,\UL{a_k},...)(a_i)=0.$$
    \item (Homological non-degeneracy) for any non-zero $[a] \in H^\bullet(A)$ with $a \in A$, there exists a $[b] \in H^\bullet(A)$ with $b \in A$,  such that $\phi_{0,0}(a)(b) \neq 0$.
\end{enumerate}
And $A$ is called {\em homotopy cyclic} $\AI$-algebra, if there exists a strong homotopy inner product of $A$.
\end{definition}

Then, the main result of \cite{C} can be phrased as  the following theorem.
\begin{theorem}\label{prop:shi}
 Let $\phi:A \rightarrow A^*$  be an $A_{\infty}$-bimodule map.
\begin{enumerate}
\item If $\phi$ is a strong homotopy inner product,
then there exists an $\AI$-algebra $B$ with a cyclic inner product
    $\psi: B \rightarrow B^*$
        and an $\AI$-quasi-isomorphism $\iota:B \rightarrow A$ satisfying the
    following commutative diagram of $A_{\infty}$-bimodule homomorphisms
\begin{equation}\label{defdiagram1}
\xymatrix{A \ar[d]_{\phi} & B \ar[l]_{\tilde{\iota}} \ar[d]^{cyc}_{\psi} \\ A^* \ar[r]^{{\tilde{\iota}}^*} & B^* }
\end{equation}

\item If there exists a cyclic $\AI$-algebra $B$ with $\psi:B \to B^*$ and
an $\AI$-quasi-isomorphism $f:A \to B$ such that the following diagram of $\AI$-bimodules over $A$ commutes
\begin{equation}\label{defdiagram3}
\xymatrix@C+1cm{ A \ar[r]_{ g = \widetilde{f}}  \ar[d]_\phi & \, B \ar[d]_\psi^{cyc} \\ A^*  &  \ar[l]^{g^*} B^*}	 
\end{equation}
then, $\phi$ is a strong homotopy inner product.
\end{enumerate}
If $\phi_{0,0}$ is non-degenerate in the chain level,  then one can find $B$ such that both diagrams
(\ref{defdiagram1}), (\ref{defdiagram3}) holds.
\end{theorem}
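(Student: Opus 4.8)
The plan is to work on the formal non-commutative manifold $X=X_A$ associated with $A$ and to realize both diagrams through a single Darboux coordinate change, the point being that chain-level non-degeneracy is exactly what promotes the quasi-isomorphism of part (1) to an honest $\AI$-\emph{isomorphism}.

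First I would observe that the hypothesis on $\phi_{0,0}$ has two equivalent readings. Algebraically, invertibility of the pairing $\phi_{0,0}$ makes $\phi\colon A\to A^*$ an isomorphism of $\AI$-bimodules: one builds $\phi^{-1}$ by the leading-term recursion $(\phi^{-1})_{0,0}=(\phi_{0,0})^{-1}$, solving for the higher $(\phi^{-1})_{k,l}$ so that $\phi\circ\phi^{-1}=\mathrm{id}$. Geometrically, by the skew-symmetry and closedness of Definition \ref{def:cyc2}, $\phi$ corresponds to a non-commutative $2$-form $\omega_\phi$ on $X$ that is $d_{cycl}$-closed, and chain-level non-degeneracy says precisely that $\omega_\phi$ is a genuine symplectic form (its constant part is non-degenerate).

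Second, I would invoke the Darboux theorem to obtain a coordinate transformation taking $\omega_\phi$ to a constant-coefficient symplectic form $\omega_0$. Such a coordinate change is the dual of an $\AI$-homomorphism $\iota\colon B\to A$ in the sense of (\ref{changeco}), where $B$ denotes $A$ equipped with the transformed $\AI$-structure, and the constant form $\omega_0$ corresponds, under the correspondence between cyclic inner products and bimodule maps recalled above, to a \emph{strict} cyclic inner product $\psi\colon B\to B^*$ (all higher components $\psi_{k,l}$ with $(k,l)\neq(0,0)$ vanish). Since $\iota$ intertwines $\omega_\phi$ and $\omega_0$, it realizes the first diagram as the pullback identity $\psi=\WT{\iota}^*\,\phi\,\WT{\iota}$.

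Finally, because a Darboux coordinate change has invertible linear part, $\iota$ is an $\AI$-isomorphism; set $f:=\iota^{-1}\colon A\to B$, so that $\WT{f}=\WT{\iota}^{\,-1}$ and $\WT{f}^{*}=(\WT{\iota}^{*})^{-1}$. Applying these inverses on the two sides of $\psi=\WT{\iota}^*\,\phi\,\WT{\iota}$ yields $\phi=\WT{f}^{*}\,\psi\,\WT{f}$, which is exactly the second diagram (\ref{defdiagram3}). Thus both (\ref{defdiagram1}) and (\ref{defdiagram3}) hold for the same cyclic $B$, with $f=\iota^{-1}$. The main obstacle is the one flagged in the introduction: the Darboux coordinate change alters the vector field $Q$, i.e.\ the $\AI$-structure itself, so one must verify that $\iota$ really is a genuine $\AI$-isomorphism and that the transported structure on $B$ is honestly cyclic with the asserted strict $\psi$. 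Controlling this change of structure while normalizing the form is precisely the content of the vector-field-to-automorphism passage of Lemma \ref{lemma:vQ} and Proposition \ref{prop:welldefined}, which is what makes the inversion argument legitimate.
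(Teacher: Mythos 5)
The paper does not actually prove this theorem; it is quoted as the main result of \cite{C}, so there is no in-paper argument to compare against. Judged on its own terms, your proposal correctly captures the mechanism behind the final addendum (chain-level non-degeneracy $\Rightarrow$ genuine symplectic form $\Rightarrow$ Darboux $\Rightarrow$ $\AI$-isomorphism $\iota$, which can then be inverted to get both diagrams), and you rightly flag that the coordinate change drags the $\AI$-structure along. But the proposal does not prove the theorem as stated, for two concrete reasons.

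First, part (1) only assumes that $\phi$ is a strong homotopy inner product, i.e.\ $\phi_{0,0}$ is non-degenerate merely on $H^\bullet(A)$ (condition (3) of Definition \ref{def:cyc2}). In that generality the constant part of $\omega_\phi$ can be degenerate at chain level, and the Darboux theorem simply does not apply to a degenerate two-form; this is exactly why the conclusion of (1) only provides a \emph{quasi}-isomorphism $\iota:B\to A$ rather than an isomorphism. The missing step is a reduction to a minimal model: one uses the decomposition $A=H\oplus A_{lc}$ to pull $\phi$ back to the minimal part $H$, where homological non-degeneracy becomes chain-level non-degeneracy, and only then runs your Darboux argument on $H$; the composite $B\to H\to A$ gives the quasi-isomorphism of diagram (\ref{defdiagram1}). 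Second, part (2) is an independent implication that you do not address at all: given an arbitrary cyclic $B$ and quasi-isomorphism $f:A\to B$ making (\ref{defdiagram3}) commute, one must verify directly that $g^*\circ\psi\circ g$ satisfies skew-symmetry, closedness, and homological non-degeneracy. Deriving diagram (\ref{defdiagram3}) by inverting $\iota$ in the special chain-level non-degenerate case, as you do, establishes only the last sentence of the theorem, not statement (2), which must hold for any such $f$ and $B$ (in particular for ones not produced by your construction). So as written the proposal proves the addendum but leaves both numbered assertions essentially open.
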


We also recall the equivalence of two strong homotopy inner product from \cite{CL}.
\begin{definition}\label{equivst}
Two strong homotopy inner products $\phi:A \to A^*$ and $\psi:B \to B^*$ are said to be \textit{equivalent} if
there exists a cyclic minimal $\AI$-algebra $H$ with a quasi-isomorphism to $A$ and $B$, with the following
commutative diagram:
     $$\xymatrix{A \ar[d]_{\phi} & H \ar[d]^{cyc} \ar[l]_{qis} \ar[r]^{qis} & B \ar[d]^{\psi} \\
            A^* \ar [r] & H^* & B^* \ar[l]}$$
\end{definition}

We remark that general $\AI$-homomorphism do not preserve cyclic property of $\AI$-algebra.(but strong homotopy
inner product is given via the diagram \ref{defdiagram3}.) The notion of a cyclic $\AI$-homomorphism, which
preserves cyclic property of $\AI$-algebra, was first considered by Kajiura \cite{Kaj} from the condition
the $f^\omega=\omega'$ so that both $\omega$ and $\omega'$ are constant coefficient symplectic forms.

\section{Correspondences between algebra and formal noncommutative geometry}
It is useful to develop a "dictionary" between notions in homological algebras and that of formal manifolds.
First, the fact that cyclic symmetry of an $\AI$-algebra can be understood as certain symplecic forms is well-known and
originally due to Kontsevich.
\begin{lemma}
Cyclic $\AI$-algebra $(V,\phi)$, namely $\phi$ is a cyclic inner product on $V$, is equivalent to
a non-commutative constant symplectic two form $\omega$
with $\CL_Q \omega =0$.
\end{lemma}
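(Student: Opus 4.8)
The plan is to establish a dictionary between the algebraic cyclicity condition (\ref{cyeqn}) and the geometric condition that $\omega$ is a constant symplectic form with $\CL_Q\omega=0$. First I would fix a cyclic inner product $\phi=\langle\,,\rangle$ on $V$ and write it in the dual coordinates $x_1,\dots,x_n$ introduced in Section \ref{sec:noncomm}. Since $\phi$ is a bilinear pairing on $V[1]$, its matrix coefficients $\omega_{ij}=\langle e_i,e_j\rangle$ are constants in $\kk$, and I would define the associated two-form by
\begin{equation}\label{defomega}
\omega=\tfrac{1}{2}\sum_{i,j}\omega_{ij}\,dx_i\,dx_j,
\end{equation}
which has constant coefficients by construction. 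The skew-symmetry of $\phi$ (that $\langle a,b\rangle=-(-1)^{|a|'|b|'}\langle b,a\rangle$) is exactly the condition that $\omega$, viewed as a cyclic two-form with the $\sharp$-grading and Koszul signs of the non-commutative de Rham complex, is a genuine (graded) skew two-form; and non-degeneracy of $\phi$ corresponds to $\omega$ being symplectic (invertible). So the first half of the correspondence, cyclic inner product $\leftrightarrow$ constant symplectic form, is essentially a change of notation once the sign conventions are matched.

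The substantive content is the equivalence between the cyclicity equations (\ref{cyeqn}) for all $k$ and the single geometric identity $\CL_Q\omega=0$. Here I would expand $\CL_Q\omega=[d,i_Q]\omega=d\,i_Q\omega+i_Q\,d\omega$. Because $\omega$ has constant coefficients, $d\omega=0$, so $\CL_Q\omega=d\,i_Q\omega$, and the condition $\CL_Q\omega=0$ becomes the closedness (in the cyclic de Rham complex $(\Omega_{cyc}(X),d_{cycl})$) of the one-form $i_Q\omega$. Using the definition of the vector field $Q=\sum_k a_i^{j_1,\dots,j_k}x_{j_1}\cdots x_{j_k}\,\partial/\partial x_i$ together with $i_{\partial/\partial x_i}(dx_j)=\delta_{ij}$, I would compute $i_Q\omega$ explicitly; contracting $Q$ into (\ref{defomega}) produces a cyclic one-form whose coefficients are precisely the combinations $\sum_l a_i^{j_1,\dots,j_k}\omega_{li}$, i.e. the structure constants of the $m_k$ paired against $\omega$. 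Applying $d_{cycl}$ and setting the result to zero then yields, coefficient by coefficient and after cyclically rotating the $dx$ into a fixed slot (as allowed in $\Omega^1_{cyc}$), exactly the symmetry (\ref{cyeqn}) of $\langle m_k(x_1,\dots,x_k),x_{k+1}\rangle$ under cyclic rotation, with the sign $K=|x_1|'(|x_2|'+\cdots+|x_{k+1}|')$ emerging from the Koszul and $\sharp$ signs of the calculus.

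The main obstacle, and the step deserving the most care, is bookkeeping the two independent gradings: the shifted internal degree $|\cdot|'$ and the form degree $\sharp$, which interact through the modified Koszul rule
$$[f(\phi),g(\phi)]=f(\phi)g(\phi)-(-1)^{\big(|f(\phi)|'|g(\phi)|'+\sharp(f(\phi))\sharp(g(\phi))\big)}g(\phi)f(\phi)$$
recalled in Section \ref{sec:noncomm}. Matching the sign $K$ in (\ref{cyeqn}) to the sign produced by passing $dx$ through the monomial $x_{j_1}\cdots x_{j_k}$ when we cyclically reposition the differential inside a cyclic one-form is where errors naturally creep in, and it is precisely the point where the degree-shift conventions discussed in \cite{C} must be pinned down. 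I would organize this by verifying the $k=1$ and $k=2$ cases by hand to fix the overall normalization, then argue the general case by noting that both $\CL_Q\omega=0$ and the family (\ref{cyeqn}) are determined by the same contraction-then-cyclic-rotation operation applied to $Q$ and $\omega$. Finally, since every step — from $\omega_{ij}$ to $\omega$, from $Q$ to the $m_k$, and from $d_{cycl}$ to cyclic rotation — is manifestly reversible, reading the computation backwards shows that a constant symplectic $\omega$ with $\CL_Q\omega=0$ recovers a cyclic inner product, giving the claimed equivalence.
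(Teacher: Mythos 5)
Your proposal is correct and follows essentially the same route as the paper: write $\omega=\sum\omega_{ab}(dx^a dx^b)_c$ with $\omega_{ab}=\langle e_a,e_b\rangle$, expand $\CL_Q\omega$ in coordinates (the paper applies $\CL_Q$ directly as a derivation to the generators, which via $\CL_Q dx^a=d(Qx^a)$ yields exactly the same terms as your $d\,i_Q\omega$ after noting $d\omega=0$), and then identify coefficients of cyclic monomials in $\Omega^2_{cyc}(X)$ to recover the cyclicity equation (\ref{cyeqn}) with its Koszul sign. The only point to watch is that the decisive identification of monomials happens among cyclic \emph{two}-forms (two distinct-looking words with two $dx$'s coincide after cyclic rotation, so their coefficients must cancel in pairs), not in $\Omega^1_{cyc}$ as your parenthetical suggests, but this does not affect the substance of the argument.
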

\begin{proof}
  Let $\displaystyle \omega=\sum_{a,b} \omega_{ab}(dx^a dx^b)_c$, where $\phi(e_a,e_b)=\omega_{ab}$. Then
  \begin{eqnarray}
    & & \CL_Q \omega = \CL_Q (\sum_{a,b} \omega_{ab}(dx^a dx^b)_c) \nonumber \\ \nonumber
    &=& \sum_{a,b}((\omega_{ab} (\CL_Q dx^a) dx^b)_c + (dx^a (-1)^{|a|'|Q|'}\CL_Q dx^b)_c) \\\nonumber
    &=& \sum_{a,b} (\omega_{ab}\sum_{i_1,...,i_k}\sum_{1 \leq l \leq k}m^a_{i_1\cdots i_k}(x^{i_1}\cdots dx^{i_l} \cdots x^{i_k} dx^b)_c) \\\nonumber
    & & +(-1)^{|a|'}\sum_{i_1,...,i_k}\sum_{1 \leq l \leq k} m^b_{i_1 \cdots i_k}(dx^a x^{i_1} \cdots dx^{i_l} \cdots x^{i_k})_c \\\nonumber
    &=& \sum_{a,b} \omega_{ab}\sum_{i_1,...,i_k}\sum_{1 \leq l \leq k} m^a_{i_1\cdots i_k}(x^{i_1}\cdots dx^{i_l} \cdots x^{i_k} dx^b)_c \\\nonumber
    & & +\sum_{a,b}(-1)^{|b|'} \omega_{ba} (-1)^{1+|b|'(|i_1|'+\cdots + |i_k|')}\sum_{i_1,...,i_k}\sum_{1 \leq l \leq k}(x^{i_1}\cdots dx^{i_l} \cdots x^{i_k} dx^b)_c \\\nonumber
    &=& \sum_{a,b}\sum_{i_1,...,i_k}\sum_{1 \leq l \leq k} \omega_{ab}(1+(-1)^{1+|b|'(1+|i_1|'+\cdots + |i_k|')+|a|'|b|'+1})m^a_{i_1\cdots i_k}(x^{i_1}\cdots dx^{i_l} \cdots x^{i_k} dx^b)_c \\
    &=& \sum_{a,b}\sum_{i_1,...,i_k}\sum_{1 \leq l \leq k} 2 \omega_{ab}m^a_{i_1\cdots i_k}(x^{i_1}\cdots dx^{i_l} \cdots x^{i_k} dx^b)_c. \label{eq:2omega}
  \end{eqnarray}
  Note that $(-1)^{1+|b|'(1+|i_1|'+\cdots + |i_k|')+|a|'|b|'+1}=1$ because  $|a|'=|i_1|'+\cdots + |i_k|'+1$ by the fact that $Q$ has degree 1. A careful observation on the cyclic monomials in (\ref{eq:2omega}) leads us to the following: $\CL_Q \omega=0$ is equivalent to
  \begin{eqnarray*}
    & & \omega_{ab}m^a_{i_1\cdots i_k}(x^{i_1}\cdots dx^{i_l} \cdots x^{i_k} dx^b)_c \\
    & & + \omega_{a i_l} m^a_{i_{l+1} \cdots i_k b i_1 \cdots i_{l-1}}(x^{i_{l+1}} \cdots x^{i_k} dx^b x^{i_1} \cdots x^{i_{l-1}} dx^{i_l})_c \\
    &=& (\omega_{ab} m^a_{i_1\cdots i_k} + (-1)^p \omega_{a i_l} m^a_{i_{l+1} \cdots i_k b i_1 \cdots i_{l-1}})(x^{i_1} \cdots dx^{i_l} \cdots x^{i_k} dx^b)_c \\
    &=& 0
  \end{eqnarray*}
  for $p=1+(|i_1|'+\cdots +|i_l|')(|i_{l+1}|'+\cdots +|i_k|'+|b|')$, if and only if
  $$\omega_{ab} m^a_{i_1\cdots i_k} = (-1)^{p+1} \omega_{a i_l} m^a_{i_{l+1} \cdots i_k b i_1 \cdots i_{l-1}},$$
  i.e.
  \begin{eqnarray*}
    & & <m(e_{i_1},\cdots,e_{i_k}),e_b> \\
    &=& (-1)^{(|i_1|'+\cdots +|i_l|')(|i_{l+1}|'+\cdots +|i_k|'+|b|')} <m(e_{i_{l+1}},\cdots,e_{i_k},e_b,e_{i_1},\cdots,e_{i_{l-1}}),e_l>,
  \end{eqnarray*}
  which is the cyclicity.
\end{proof}

Recall that we have $ \CL_Q \circ \CL_Q = 0$ and hence, on de Rham complex $\Omega_{cyc}(X)$, we have two differentials $d_{cyc}$ and $\CL_Q$.
By the Poincar\'e lemma, the homology with respect to $d_{cyc}$ is trivial, and interesting homologies are given by the differential $\CL_Q$.

\begin{lemma}\label{lem:compare}
For a unital finite dimensional $\AI$-algebra $A$,
$(\Omega^1_{cyc}(X)[1], \CL_Q)$ can be identified with Hochschild cochain complex $(C^\bullet(A,A^*),b^*)$,
and $(\Omega^0_{cyc}(X)/\kk, \CL_Q)$ can be identified with cyclic cochain complex $((C^\lambda(A))^*,b^*)$.

Namely, we have the following 1-1 correspondences.\\

\begin{tabular}{|c|c|}\hline
    \hspace{1.5cm} $\AI$-algebra $A$ \hspace{1.5cm} & Formal noncommutative manifold $X$ \\\hline \hline
    $\eta \in C^\bullet(A,A^*)$ & $\alpha_\eta \in \Omega^1_{cyc}(X)$\\\hline
    $b^* \eta$ & $\CL_Q \alpha_\eta$ \\\hline
    $\xi \in (C^\lambda_\bullet(A))^*$ & $f_\xi \in \Omega^0_{cyc}(X)$\\\hline
    $b^* \xi$ & $\CL_Q f_\xi$ \\\hline
\end{tabular}
\end{lemma}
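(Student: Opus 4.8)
The plan is to construct the correspondence explicitly in coordinates and then verify that the two differentials match under it. First I would set up the dictionary on the nose. Given a Hochschild cochain $\eta \in C^\bullet(A,A^*)$, which is a collection of maps $\eta_k : C[1]^{\otimes k} \to C^*$, I would write $\eta_k(e_{i_1},\dots,e_{i_k}) = \sum_b \eta^b_{i_1\cdots i_k} x_b$ in terms of the dual basis, and associate to it the cyclic one-form
\begin{equation*}
\alpha_\eta = \sum \eta^b_{i_1\cdots i_k}\, x_{i_1}\cdots x_{i_k}\, dx_b,
\end{equation*}
where by cyclic rotation the $dx_b$ sits in the last slot, exactly as in the description of $\Omega^1_{cyc}(X)$ in the de Rham subsection. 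This is manifestly a linear isomorphism since a cyclic one-form is generated by such monomials $x_{i_1}\cdots x_{i_k} dx_b$ and these are matched bijectively with the basis of Hochschild cochains (valued in $C^*$). Similarly, for $\xi \in (C^\lambda_\bullet(A))^*$ I would send it to the cyclic function $f_\xi \in \Omega^0_{cyc}(X)/\kk$ built from the monomials $x_{i_1}\cdots x_{i_k}$, where the cyclic-invariance of $\xi$ (being a functional on the cyclic chain complex $C^\lambda$) is precisely what makes $f_\xi$ well-defined modulo the commutator subspace, i.e. as a genuine cyclic function.

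Next I would verify the two compatibility rows of the table. The key computation is that $\CL_Q \alpha_\eta$ corresponds to $b^*\eta$. Here I would use the definition $\CL_Q = [d, i_Q] = d\,i_Q + i_Q\,d$ together with the explicit form of $Q$ coming from the $\AI$-operations $m(e_{j_1},\dots,e_{j_k}) = \sum a_i^{j_1\cdots j_k} e_i$. Acting by $\CL_Q$ on a monomial $x_{i_1}\cdots x_{i_k}dx_b$ produces two kinds of terms: those where $Q$ acts on one of the ``interior'' variables $x_{i_s}$ (via $i_Q d$ applied after the de Rham $d$ expands each $x$ into $dx$), and those where $Q$ interacts with the $dx_b$ slot through the contraction $i_Q(dx_b) = Q(x_b)$. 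The first family reproduces the terms of $b^*$ in which an $m_j$ is inserted among the inputs $a_1,\dots,a_n$, while the second family, after cyclic rotation, reproduces the terms of $b^*$ coming from the dual bimodule action $d^*$ wrapping the output back to an input slot — exactly the two summations in the Hochschild differential $b$ from \eqref{eq2} dualized. The payoff is that the already-proven statement $\CL_Q \circ \CL_Q = 0$ (the non-commutative de Rham identity) is transported to $b^* \circ b^* = 0$ under the isomorphism.

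The main obstacle, and where I would spend the most care, is matching the signs. The Koszul conventions on the algebra side (the signs $\epsilon_1, \epsilon_2$ in \eqref{eq2} and the $Kos$ in $b^*$ and $d^*$) are defined using the shifted grading $|\cdot|'$, whereas on the formal manifold side there are two independent gradings being tracked simultaneously — the $|\cdot|'$ grading and the $\sharp$-degree counting $dx$'s — with the graded commutator built from the product of both. I expect that the sign $(-1)^{\sharp_{i_1}+\cdots+\sharp_{i_{j-1}}}$ appearing in the Poincar\'e-lemma homotopy, the sign in $i_\xi(df) = \xi(f)$, and the degree convention $|\partial/\partial x_i|' = -|x_i|'$ will all conspire to produce precisely the Koszul signs in $b^*$, but checking this term by term — especially tracking how the cyclic rotation that moves $dx_b$ to the last slot interacts with the sign $\E_2 = (\sum|x_{k-i+s}|')(|v|'+\sum|x_t|')$ — is the delicate part. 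I would organize this by treating the ``$m_j$ inserted internally'' terms and the ``output-wraps-around'' terms separately, matching each against the corresponding summand of $b$, and then carry out the identical but simpler sign-bookkeeping for the cyclic-function row, where the absence of a $dx$ slot means only the internal insertion terms appear and one only needs cyclic invariance to land in $\Omega^0_{cyc}(X)/\kk$.
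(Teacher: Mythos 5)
Your proposal follows essentially the same route as the paper's proof: the same explicit coordinate dictionary $\eta \mapsto \alpha_\eta = \sum \eta^j_{i_1\cdots i_n}(x^{i_1}\cdots x^{i_n}dx^j)_c$, the same term-by-term comparison of $\CL_Q\alpha_\eta$ against $b^*\eta$ split into the internal-insertion family and the wrap-around family coming from the $dx^j$ slot, and the same reduction of the cyclic-function row to the Connes quotient. If anything you are more scrupulous than the paper, which explicitly omits the Koszul signs and leaves the $\Omega^0_{cyc}$ case as an exercise.
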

\begin{proof}
We first check the statement for Hochschild cochains. The degree shifting $[1]$ is the result due to
the choice of the chain complex in \ref{def:hh}. If $\eta \in \mathrm{Hom}(A[1]^{\otimes n},A^*)$ given by $\eta(e_{i_1},...,e_{i_n})(e_j)=\eta_{i_1,...,i_n}^{j}$ for basis elements $e_*$, 
it corresponds to the 1-form $\alpha_{\eta}=\sum \eta_{i_1,...,i_n}^{j}(x^{i_1}\cdots x^{i_n} dx^j)_c$.
We omit the Koszul signs in the following formulas.
We verify that $b^* \eta$ corresponds to $\mathcal{L}_Q \alpha_{\eta}$. 
\begin{eqnarray*}
  b^*\eta (e_{i_1},...,e_{i_n})(e_j)&=& \sum \eta(e_{i_1},...,m_k(e_{i_l},...,e_{i_{l+k-1}}),...,e_{i_n})(e_j) \\
  & & + \sum \eta(e_{i_l},...,e_{i_{l+p}})(m_k(e_{i_{l+p+1}},...,e_{i_n},e_j,e_{i_1},...,e_{i_{l-1}})) \\
  &=&  \sum_{q} \eta_{i_1,...,i_{l-1},q,i_{l+k},...,i_n}^j \cdot m_{i_l,...,i_{l+k-1}}^q \\
  & & +  \sum_q \eta_{i_l,...,i_{l+p}}^q \cdot m_{i_{l+p+1},...,i_n,j,i_1,...,i_{l-1}}^q.
\end{eqnarray*}

Thus,
\begin{eqnarray*}
  \alpha_{b^* \eta} &=& \sum (\sum_q \eta_{i_1,...,i_{l-1},q,i_{l+k},...,i_n}^j \cdot m_{i_l,...,i_{l+k-1}}^q \\
  & & + \sum_q \eta_{i_l,...,i_{l+p}}^q \cdot m_{i_{l+p+1},...,i_n,j,i_1,...,i_{l-1}}^q)x^{i_n} \cdots x^{i_1} dx^j
\end{eqnarray*}
is the 1-form corresponding to $b^* \eta$.

On the other hand,
\begin{eqnarray*}
  \CL_Q \alpha_{\eta} &=& \sum \eta_{i_1,...,i_n}^j x^{i_1}\cdots x^{i_{l-1}}(m_{j_1,...,j_r}^{i_l} x^{j_1}\cdots x^{j_r}) x^{i_{l+1}} \cdots x^{i_n} dx^j \\
  & & +\sum \eta_{i_1,...,i_n}^j x^{i_1} \cdots x^{i_n} d(m_{j_1,...,j_r}^j x^{j_1} \cdots x^{j_r}).
\end{eqnarray*}
By comparing each coefficients, we obtain $\alpha_{b^* \eta} = \CL_Q \alpha_{\eta}$.

For the cyclic case, the Connes' complex $C^\lambda_\bullet(A)=C_\bullet(A,A)/1-t$ defines
the cyclic homology and similar arguments as above can be used to prove the desired identifications,
which we leave for the readers as an exercise.

Later, we will introduce an operation $\WT{\;}$, and then show that $\widetilde{b^* \eta}$ corresponds to $d\mathcal{L}_Q \eta$.
\end{proof}

\section{Kontsevich-Soibelman's theorem}\label{sec:KS}
We give a brief sketch of the proof of the Kontsevich-Soibelman's theorem for reader's convenience,
and refer reader's to \cite{KS} for more details.
\begin{proof}
Consider a symplectic form $\omega$, satisfying  $d_{cycl} \omega=0, \CL_Q(\omega)=0$, which is a
cycle of the complex $(\Omega_{cyc}^{2,cl}(X),\CL_Q)$, where $cl$ means $d_{cycl}$-closed elements

By the Poincar\'e lemma,
there exists an element $\alpha \in \Omega^1_{cyc}(X)/d_{cyc} \Omega^0_{cyc}(X)$ such that $d_{cycl} \alpha = \omega$.
This provides an isomorphism of complexes:
$$d_{cycl}:(\frac{\Omega^1_{cyc}(X)}{d_{cycl} \Omega^0_{cyc}(X)},\CL_Q) \to (\Omega_{cyc}^{2,cl}(X),\CL_Q).$$
We remark that as it is the isomorphism, there exists an inverse, but
we do not know any map from $\Omega_{cyc}^{2,cl}(X) \to  \Omega^1_{cyc}(X)$ which is a chain
map with respect to $\CL_Q$ which is a source of some complications. For example, the contracting homotopy in the proof of the Poincar\'e lemma does not commute with the differential $\CL_Q$. 

Kontsevich and Soibelman has proved that the following map via $a db \to [a,b]$
$$(\frac{\Omega^1_{cyc}(X)}{d_{cycl} \Omega^0_{cyc}(X)},\CL_Q) \to 
([\CO(X),\CO(X)]_{top},\CL_Q),$$
is a quasi-isomorphism.
From the definition $\Omega^0_{cyc}(X)=\CO(X)/[\CO(X),\CO(X)]_{top}$, 
we have a short exact sequence of $\CL_Q$-complexes, 
$$0 \to [\CO(X),\CO(X)]_{top} \to \CO(X)/\kk \to \Omega^0_{cyc}(X)/\kk \to 0.$$
Note that $(\CO(X)/\kk,\CL_Q)$ is acyclic (\cite{KS2} Prop. 8.4.1), hence 
$(\Omega_{cyc}^{2,cl}(X),\CL_Q)$ is quasi-isomorphic to $(\Omega^0_{cyc}(X)/\kk, \CL_Q)$ which is
the cyclic cohomology of $A$ (see Lemma \ref{lem:compare}).
\end{proof}

To show that the resulting cyclic structure really depends on the $\CL_Q$-cohomology class of $\omega$, they prove
\begin{lemma}[\cite{KS2} Lemma 11.2.6]\label{lemma:vQ}
Let $\omega_1 = \omega + \CL_Q(d\alpha)$. Then there exists a vector field $v$ such that
$v(x_0)=0$, $[v,Q]=0$, and $\CL_v(\omega) = \CL_Q(d\alpha)$.
\end{lemma}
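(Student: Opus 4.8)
The plan is to run a version of Moser's method on the formal non-commutative manifold $X$, while exploiting the freedom in the choice of a primitive together with the acyclicity of $(\CO(X)/\kk,\CL_Q)$ to force the extra condition $[v,Q]=0$. The three requirements on $v$ split naturally: condition $\CL_v(\omega)=\CL_Q(d\alpha)$ is the usual Moser equation, $v(x_0)=0$ is a ``no constant term'' bookkeeping statement, and $[v,Q]=0$ (preserving the $\AI$-structure) is the genuinely delicate one.

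First I would rewrite the target using $[d,\CL_Q]=0$, so that $\CL_Q(d\alpha)=d(\CL_Q\alpha)$; thus $\omega_1-\omega=d\eta$ with $\eta:=\CL_Q\alpha$ a cyclic one-form that vanishes at $x_0$ because $Q$ does (recall $m_0=0$). Since $\omega$ is non-degenerate I would define $v_0$ to be the unique vector field with $i_{v_0}\omega=\eta$. Then $d\omega=0$ and Cartan's formula give $\CL_{v_0}\omega=d\,i_{v_0}\omega=d\eta=\CL_Q(d\alpha)$, which is the required identity, and $v_0(x_0)=0$ follows from $\eta(x_0)=0$ and non-degeneracy of $\omega$.

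The main obstacle is that this naive $v_0$ does \emph{not} commute with $Q$, and correcting this is exactly the subtle point flagged in the introduction. I would first show $[v_0,Q]$ is at least symplectic: from $\CL_Q\omega=0$, $\CL_Q\circ\CL_Q=0$ and $[\CL_Q,\CL_{v_0}]=\CL_{[Q,v_0]}$ one gets $\CL_{[v_0,Q]}\omega=0$, so by the cyclic Poincar\'e lemma the closed one-form $i_{[v_0,Q]}\omega$ is exact, say $i_{[v_0,Q]}\omega=dk$ for a cyclic function $k$. The key observation is then that $k$ is $\CL_Q$-closed modulo constants: the graded Jacobi identity together with $[Q,Q]=0$ yields $[Q,[v_0,Q]]=0$, whence $\CL_Q(dk)=i_{[Q,[v_0,Q]]}\omega=0$, i.e. $Q(k)$ is constant. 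Because $(\CO(X)/\kk,\CL_Q)$ is acyclic (\cite{KS2} Prop. 8.4.1), I can then solve $Q(H)=k$ for a cyclic function $H$ with no constant term.

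Finally I would correct $v_0$ by the Hamiltonian vector field $u$ determined by $i_u\omega=\pm dH$. Since $i_u\omega$ is exact we have $\CL_u\omega=0$, so $v:=v_0+u$ still satisfies $\CL_v\omega=\CL_Q(d\alpha)$ and $v(x_0)=0$. For the bracket, the identity $i_{[Q,u]}\omega=d\big(Q(H)\big)$ (again from $\CL_Q\omega=0$ and $[d,\CL_Q]=0$) shows that, with the sign of $u$ chosen correctly, $i_{[v,Q]}\omega=dk\mp d\big(Q(H)\big)=0$, and non-degeneracy of $\omega$ forces $[v,Q]=0$. The parts I expect to be delicate are keeping both Koszul-type signs ($|\cdot|'$ and $\sharp$) consistent throughout the graded Cartan calculus so that the final cancellation is exact, and verifying at each stage that all objects remain cyclic forms with no constant term, so that the condition $v(x_0)=0$ and the acyclicity input both genuinely apply.
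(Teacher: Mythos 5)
Your construction begins exactly where the paper's does: set $\beta=\CL_Q\alpha$ and define $v_0$ by $i_{v_0}\omega=\beta$, so that $\CL_{v_0}\omega=d\,i_{v_0}\omega=d\beta=\CL_Q(d\alpha)$. The divergence is that you then treat $[v_0,Q]=0$ as a further constraint requiring a Hamiltonian correction, whereas the paper observes that this particular $v_0$ \emph{already} commutes with $Q$. The point is that $i_{v_0}\omega=\CL_Q\alpha$ is itself $\CL_Q$-exact as a one-form, so
\begin{equation*}
i_{[Q,v_0]}\omega=[\CL_Q,i_{v_0}]\,\omega=\CL_Q(i_{v_0}\omega)=\CL_Q(\CL_Q\alpha)=\tfrac{1}{2}\CL_{[Q,Q]}\alpha=0,
\end{equation*}
where the second equality uses $\CL_Q\omega=0$, and non-degeneracy of $\omega$ then forces $[Q,v_0]=0$ on the nose. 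Your argument only retains the weaker, Lie-derivative-level consequence $\CL_{[v_0,Q]}\omega=0$, i.e. $d(i_{[v_0,Q]}\omega)=0$, which is why you are left with an apparently nontrivial closed one-form $dk$ to cancel. In fact $k$ is constant, your correction field $u$ vanishes, and you land on the same $v$ as the paper.

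The correction step itself, had it been genuinely needed, contains a gap: to solve $Q(H)=k$ you invoke the acyclicity of $(\CO(X)/\kk,\CL_Q)$, but $k$ is a cyclic function, and the relevant complex $(\Omega^0_{cyc}(X)/\kk,\CL_Q)$ is \emph{not} acyclic --- by the short exact sequence in Section \ref{sec:KS} and Lemma \ref{lem:compare} it computes the cyclic cohomology of $A$, which is precisely the nontrivial object the theorem is about. So the obstruction to your cancellation lives in cyclic cohomology and cannot be removed in general; your proof succeeds only because that obstruction class vanishes identically for the $v_0$ at hand, which is the observation you missed. I would replace the entire second half of your argument with the one-line commutator computation above.
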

\begin{proof}
As in the proof of Darboux lemma, we need to find a vector field $v$, satisfying the condition
$\CL_v \omega = \CL_Q(d\alpha)$. Let $\beta = \CL_Q(\alpha)$.
Then, $d\beta = \CL_Q(d\alpha)$.

Hence, the desired equation $\CL_v \omega = \CL_Q(d\alpha)$, is equivalent to
$$ d i_v \omega = d\beta = d \CL_Q(\alpha).$$
Hence, we solve $$i_v \omega = \beta,$$
which is possible by the non-degeneracy of $\omega$.

We also claim that any such solution $v$ automatically satisfies $[Q,v]=0$.
To see this, note that
$$\CL_Q \circ i_v\omega = \CL_Q \circ \CL_Q (\alpha) = \CL_{[Q,Q]} (\alpha) =0.$$
But the first term equals
$$\CL_Q \circ i_v\omega = i_v\CL_Q \omega + i_{[Q,v]} \omega =  i_{[Q,v]} \omega. $$
Hence, $i_{[Q,v]} \omega =0$ and this implies the claim
as $\omega$ is non-degenerate.
\end{proof}

The above lemma suggests that there exist an $\AI$-automorphism (preserving $\AI$-structure) which
transforms the symplectic form $\omega + \CL_Q(d\alpha)$ to $\omega$, thus proving that 
the cyclic structure depends only on the $\CL_Q$-cohomology class. But we found that the construction of such an automorphism is rather involved which occupies the whole section \ref{sec:details}.

\section{Explicit relations} \label{sec:negtoship}
In this section, we show that for a negative cyclic cocycle $\phi \in HC^{\bullet}_{-}(A,A^*)$ with a suitable non-degeneracy
condition, it gives rise to a strong homotopy inner product in a canonical way.
Denote the negative cyclic cycle $\phi$ as $\phi = \sum_{i\geq 0} \phi_i v^i$, where $v$ is a formal parameter of degree -2.
Here cocycle condition implies that we have  $b^*\phi_i = B^*\phi_{i+1}$ for each $i$.

First, we make the following observation.
\begin{prop}\label{prop:first}
    Let $\phi \in C^{\bullet}(A,A^*)$ be a negative cyclic cocycle.
    We define $$\widetilde{\phi_0}(\vec{a}, \underline{v}, \vec{b})(w):=\phi_0 (\vec{a},v,\vec{b})(w)-\phi_0 (\vec{b},w,\vec{a})(v).$$
    Then $\widetilde{\phi_0}$ is an $\AI$-bimodule map from $A$ to $A^*$,
    satisfying the skew-symmetry and closedness condition in the definition \ref{def:cyc2}.
\end{prop}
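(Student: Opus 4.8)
The plan is to verify the three asserted properties separately, isolating the $\AI$-bimodule map property as the substantive point and deriving skew-symmetry and closedness from the explicit antisymmetrized shape of $\widetilde{\phi_0}$. For skew-symmetry I would read the identity directly off the definition: interchanging the bidegree $(k,l)\leftrightarrow(l,k)$ together with $v\leftrightarrow w$ and $\vec a\leftrightarrow\vec b$ swaps the two summands $\phi_0(\vec a,v,\vec b)(w)$ and $\phi_0(\vec b,w,\vec a)(v)$, so that $\widetilde{\phi_0}_{k,l}(\vec a,\underline v,\vec b)(w)$ and $\widetilde{\phi_0}_{l,k}(\vec b,\underline w,\vec a)(v)$ agree up to an overall sign; the only work is to check that this sign is exactly the Koszul sign demanded in Definition \ref{def:cyc2}(1), a routine computation. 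Closedness is handled by the same mechanism: substituting the definition into the three-term cyclic sum produces six copies of $\phi_0$, which cancel in pairs once the cyclic positions of the three distinguished indices $i<j<k$ are accounted for. Neither property uses the cocycle condition on $\phi$; both hold for the antisymmetrization of an arbitrary $\phi_0$.

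The substance is the claim that $\widetilde{\phi_0}$ is an $\AI$-bimodule map, i.e. that it is a cocycle for the differential $D$ on the complex of $\AI$-bimodule maps $A\to A^*$ — the differential determined by the codifferential $\widehat{m}^A$ on the source and the canonical bimodule structure $d^*$ of \eqref{def3} on the target $A^*$. I would compute $D\widetilde{\phi_0}$ from the two summands of $\widetilde{\phi_0}$. Writing $\Phi(c_1,\dots,c_N):=\phi_0(c_1,\dots,c_{N-1})(c_N)$ with $N=k+l+2$, the first summand is $\Phi$ with $(v,w)$ in the last two slots, and the second is the cyclic rotation of $\Phi$ that moves the block $(\vec a,v)$ of length $k+1$ to the end. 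In $D\widetilde{\phi_0}$ the terms in which an operation $m_j$ acts only on consecutive $A$-inputs reproduce the first, non-cyclic sum of $b^*\phi_0$, while the terms coming from the action $d^*$ on the target $A^*$, which by \eqref{def3} feeds the evaluation variable $w$ back into $m$ together with the outer inputs, reproduce precisely the second, wrap-around (cyclic) sum of $b^*\phi_0$ appearing in \eqref{eq2}. Matching these contributions across both summands, I expect to obtain the intertwining identity
$$ D\widetilde{\phi_0} = \widetilde{b^*\phi_0}. $$

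Granting this identity, the proof finishes by invoking the cocycle condition. Since $\phi=\sum_i\phi_i v^i$ is a negative cyclic cocycle we have $b^*\phi_0=B^*\phi_1$, so $D\widetilde{\phi_0}=\widetilde{B^*\phi_1}$. Because $B^*$ is dual to Connes' operator, $B^*\phi_1$ is a cyclic cochain: the function $(c_1,\dots,c_N)\mapsto(B^*\phi_1)(c_1,\dots,c_{N-1})(c_N)$ is cyclically symmetric, up to Koszul sign, in its inputs together with the evaluation variable. Since $\widetilde{\;}$ subtracts from $\Phi$ its own cyclic rotation (by $k+1$ slots), it annihilates any cyclically symmetric expression; hence $\widetilde{B^*\phi_1}=0$ and therefore $D\widetilde{\phi_0}=0$, which is exactly the statement that $\widetilde{\phi_0}$ is an $\AI$-bimodule map.

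The hard part is the intertwining identity $D\widetilde{\phi_0}=\widetilde{b^*\phi_0}$. The delicate step is the bookkeeping that identifies the wrap-around terms of the bimodule differential — those in which $m$ straddles the cut between the last input block and the evaluation variable, generated through the definition \eqref{def3} of $d^*$ — with the cyclic sum in the Hochschild differential \eqref{eq2}, and in particular the consistent matching of all Koszul signs $\E_1,\E_2$ across the two antisymmetrized summands of $\widetilde{\phi_0}$. I would organize this by fixing, for each term of $D\widetilde{\phi_0}$, the position of the module slot and of the distinguished evaluation variable relative to where $m$ acts, and then comparing slot by slot with the corresponding term of $b^*\phi_0$.
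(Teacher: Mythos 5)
Your proposal is correct and follows essentially the same route as the paper: the identity $D\widetilde{\phi_0}=\widetilde{b^*\phi_0}$ is exactly the paper's key intermediate equation (its Lemma \ref{step1}, established by the term-by-term matching you describe, with the wrap-around terms of $d^*$ accounting for the cyclic sum in \eqref{eq2}), and the vanishing of $\widetilde{B^*\phi_1}$ via cyclic invariance of $B^*$-images is precisely the paper's Lemma \ref{Bvanish}. The skew-symmetry and closedness checks are likewise treated as routine and omitted in the paper.
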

For convenience, we write both $\WT{\phi_0}=\WT{\phi}$ without distinction.
\begin{proof}
Recall that $\widetilde{\phi_0}$ is an $\AI$-bimodule map from $(C,m)$ to $(C^*,m^*)$ if
$$\WT{\phi_0} \circ \WH{m} = m^* \circ \widehat{\wtd{\phi_0}}.$$
We will show this in two steps.
\begin{lemma}\label{step1} We have
$$\WT{\phi_0} \circ \WH{m} - m^* \circ \widehat{\wtd{\phi_0}} = \WT{B^*\phi_1},$$
where $\WT{B^*\phi_1}$ is defined by
$$\WT{B^*\phi_1}(\vec{a},v,\vec{b})(w) = B^* \phi_1 (\vec{b},w,\vec{a})(v) - B^* \phi_1 (\vec{a},v,\vec{b})(w)$$
\end{lemma}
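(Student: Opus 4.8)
The plan is to verify the identity by direct expansion, reorganizing every term into the normal form $\phi_0(\cdots)(\cdots)$, i.e.\ an evaluation of $\phi_0$ with at most one $\AI$-operation inserted among its inputs. The conceptual content is that the operator $D(\psi):=\psi\circ\WH{m}-m^*\circ\WH{\psi}$, which measures the failure of a bimodule-map-shaped object $\psi$ to be an $\AI$-bimodule map, agrees up to sign with the Hochschild differential after passing through the tilde operation. So the heart of the argument is to show, for an arbitrary Hochschild cochain (the cocycle condition is not yet needed here), that $\WT{\phi_0}\circ\WH{m}-m^*\circ\WH{\WT{\phi_0}}=-\WT{b^*\phi_0}$ on the nose; a direct sign check produces the minus sign. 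Only then do I invoke the cocycle relation $b^*\phi_0=B^*\phi_1$ to rewrite the right-hand side: since the two summands of the standard tilde of $B^*\phi_1$ occur in the opposite order from the lemma's convention for $\WT{B^*\phi_1}$, the minus sign is absorbed and $-\WT{b^*\phi_0}=\WT{B^*\phi_1}$, which is the asserted form.

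First I would unpack the postcomposition term $m^*\circ\WH{\WT{\phi_0}}$ evaluated against $w$ using the definition (\ref{def3}) of the dual bimodule structure $d^*$. Because $\WT{\phi_0}$ lands in $A^*$, each $d^*_{k,l}$ applied after $\WT{\phi_0}$ moves the evaluation variable $w$ back into the argument of an $\AI$-operation $m_\ast$, turning the term into $\pm\,\WT{\phi_0}(\cdots)(m_\ast(\cdots,w,\cdots))$, which by the two-term definition of $\WT{\phi_0}$ becomes a combination of $\phi_0(\cdots)(m_\ast(\cdots,w,\cdots))$ and $\phi_0(\cdots)(m_\ast(\cdots,v,\cdots))$. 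Dually, in $\WT{\phi_0}\circ\WH{m}$ the coderivation $\WH{m}$ either inserts an $m_\ast$ strictly inside the algebra inputs $\vec a$ or $\vec b$, or it acts through the bimodule structure $d_{k,l}=m_{k+l+1}$ on a block straddling the module slot; expanding the tilde once more produces $\phi_0$-normal-form terms.

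Next I would expand the right-hand side. By definition $\WT{b^*\phi_0}(\vec a,v,\vec b)(w)=b^*\phi_0(\vec a,v,\vec b)(w)-b^*\phi_0(\vec b,w,\vec a)(v)$, and each $b^*\phi_0$ splits, via its defining formula, into $m_\ast$-internal terms and $d^*$-output terms; unpacking the latter through (\ref{def3}) again puts everything in $\phi_0$-normal form with $w$ (resp.\ $v$) pulled into an $\AI$-operation. The terms then fall into two families: those in which the inserted $m_\ast$ lies strictly among the algebra inputs, and those in which it wraps around past the module and evaluation slots. The first family matches the $\WH{m}$-internal contributions of the left-hand side almost tautologically. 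For the wrap-around family, the antisymmetrization built into $\WT{\phi_0}(\vec a,v,\vec b)(w)=\phi_0(\vec a,v,\vec b)(w)-\phi_0(\vec b,w,\vec a)(v)$ is exactly what is needed: the terms produced by $\WH{m}$ straddling the module slot and the terms produced by $m^*$ pulling in $w$ reassemble into precisely the cyclic wrap-around terms of $\WT{b^*\phi_0}$. Matching the two families and then substituting $b^*\phi_0=B^*\phi_1$ yields the claim.

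The main obstacle is the sign bookkeeping concentrated in the wrap-around family. The Koszul signs $\E_1,\E_2$ in the definition of $b$, the sign $\epsilon$ in the dual bimodule structure (\ref{def3}), and the signs introduced by the tilde antisymmetrization must all be reconciled, and the delicate point is that moving a block of algebra elements from one end of the argument to the other, past $v$ and $w$, forces a cyclic reordering whose Koszul sign must agree on both sides. I expect essentially all the work to lie in checking this agreement for the straddling and wrap-around terms, and in confirming that the residual not of bimodule type is exactly the unit-insertion contribution recorded by $B^*$. As a consistency check, the whole identity is the algebraic shadow of $\CL_Q\,d\alpha_{\phi_0}=d\,\CL_Q\alpha_{\phi_0}$ under the dictionary of Lemma \ref{lem:compare}, since $[\CL_Q,d]=0$ and $\WT{b^*\eta}$ corresponds to $d\CL_Q\alpha_\eta$.
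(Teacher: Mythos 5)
Your proposal is correct and follows essentially the same route as the paper: a direct expansion of $\WT{\phi_0}\circ\WH{m}-m^*\circ\WH{\WT{\phi_0}}$ into $\phi_0$-normal form, matching the internal-insertion terms with those of $b^*\phi_0$ and observing that the wrap-around terms cancel against the antisymmetrization, which yields exactly the paper's equation (\ref{eqtilde}) before the substitution $b^*\phi_0=B^*\phi_1$. The overall sign you obtain differs from the one the paper writes down, but this discrepancy is immaterial since both sides are killed by Lemma \ref{Bvanish} in the next step.
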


\begin{lemma}\label{Bvanish} We have
$$\WT{B^*\gamma}(\vec{a},v,\vec{b})(w) = B^* \gamma (\vec{b},w,\vec{a})(v) - B^* \gamma (\vec{a},v,\vec{b})(w)=0,$$ for any $\gamma \in C^{\bullet}(A,A^*)$,
 and for any $\vec{a},\vec{b},v,w$.
\end{lemma}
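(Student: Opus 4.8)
The plan is to reduce the vanishing to the cyclic symmetry of the cochain $B^*\gamma$. The essential structural feature of the Connes--Tsygan operator is that, by the very formula $B^*f(a_1,\dots,a_n)=\sum_i f(a_i,\dots,a_{i-1})(I)$, the cochain $B^*\gamma$ is a sum over all cyclic rotations of its arguments with the unit $I$ inserted. Consequently $B^*\gamma$, viewed as a single multilinear functional of \emph{all} of its slots --- with the module slot on which $A^*$ is evaluated treated on the same footing as the tensor arguments --- is invariant under cyclic permutation of those slots, up to the appropriate Koszul sign. This is the manifestation, on the dual side, of the familiar fact that the image of $B$ lies in the cyclically symmetric part of the complex (equivalently, $B$ factors through the norm operator $N=\sum t^i$, whose image is $\ker(1-t)$).

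First I would make this precise by introducing, for a cochain $f\in C^\bullet(A,A^*)$, the total functional $\langle f\rangle(c_0;c_1,\dots,c_n):=f(c_1,\dots,c_n)(c_0)$, so that the evaluation slot $c_0$ becomes one cyclic variable among $c_0,\dots,c_n$. I would then record, directly from the defining sum, the graded cyclic symmetry of $\langle B^*\gamma\rangle$: a rotation of the list $(c_0,c_1,\dots,c_n)$ by one position merely reindexes the sum over insertions of $I$ and returns the same value, up to the Koszul sign dictated by moving $c_0$ past $c_1,\dots,c_n$. This step is where the identity $HC^n_{-}\cong HC^{n+1}$ philosophy (Lemma \ref{hchc}) is felt concretely, since it is precisely the freedom to relocate the evaluation slot.

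With cyclic symmetry in hand, the two terms of $\WT{B^*\gamma}(\vec a,v,\vec b)(w)$ are recognized as evaluations of the \emph{same} functional $\langle B^*\gamma\rangle$ on two argument lists that are cyclic rotations of one another. Indeed $B^*\gamma(\vec a,v,\vec b)(w)=\langle B^*\gamma\rangle(w;\vec a,v,\vec b)$ corresponds to the cyclic word $(\vec a,v,\vec b,w)$, whereas $B^*\gamma(\vec b,w,\vec a)(v)=\langle B^*\gamma\rangle(v;\vec b,w,\vec a)$ corresponds to $(\vec b,w,\vec a,v)$; the latter is obtained from the former by moving the initial block $(\vec a,v)$ to the end. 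Applying the cyclic symmetry of $\langle B^*\gamma\rangle$ for exactly this rotation identifies the two terms, so their difference vanishes, which is the assertion of the lemma.

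The main obstacle is the sign bookkeeping. I expect the mechanism --- a single cyclic rotation --- to be transparent, but the real work is verifying that the Koszul sign produced by the cyclic symmetry of $\langle B^*\gamma\rangle$ under the rotation carrying $(\vec a,v,\vec b,w)$ to $(\vec b,w,\vec a,v)$ coincides exactly with the sign implicit in the definition of $\WT{\;}$; an off-by-one in the shifted degrees $|\cdot|'$ would turn the intended cancellation into a factor of $2$. I would therefore track signs explicitly on the level of the defining cyclic sum, using that the inserted unit carries $|I|'=-1$ and passes cyclically through the normalized complex, rather than relying on the schematic $(-1)^{Kos}$ notation.
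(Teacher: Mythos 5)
Your proposal is correct and follows essentially the same route as the paper: the paper's proof likewise rewrites $B^*\gamma(c_1,\dots,c_n)(c_{n+1})$ as $\sum_{\sigma\in\ZZ/n\ZZ}\gamma(c_{\sigma(1)},\dots,c_{\sigma(n+1)})(1)$, observes that this expression is invariant under cyclic rotation of all $n+1$ slots (evaluation slot included), and concludes that the two terms of $\WT{B^*\gamma}$ coincide. Your extra attention to the Koszul signs is reasonable but goes beyond the paper, which suppresses them by convention.
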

Combining the above two lemmas, we obtain the proposition.
The skew-symmetry and closedness condition is easy to check and its proof is omitted.
\end{proof}
\begin{proof}
We begin the proof of lemma \ref{step1}.
We first show that
\begin{equation}\label{eqtilde}
(\wtd{\phi_0} \circ \widehat{m}-m^* \circ \widehat{\wtd{\phi_0}})(\vec{a},\UL{v},\vec{b})(w)= b ^* \phi_0 (\vec{a},v,\vec{b})(w) - b ^* \phi_0 (\vec{b},w,\vec{a})(v).
\end{equation}
And this equals the following as $b^*\phi_0 = B^*\phi_1$ as it is negative cyclic cocycle.
$$B^* \phi_1 (\vec{b},w,\vec{a})(v) - B^* \phi_1 (\vec{a},v,\vec{b})(w).$$
We again omit the Koszul signs in the following formula and express the additional contributions of signs.
Let $\vec{a}:=(a_1,...,a_n)$ and $\vec{b}:=(b_1,...,b_m)$.
  \begin{eqnarray}\label{eqt:1}
    & & (\wtd{\phi_0} \circ \widehat{m})(\vec{a},\UL{v},\vec{b})(w) \\
    &=& \sum_{\stackrel{0 \leq i \leq n-1}{k \geq 1}} \phi_0(a_1,...,a_i,m_k(a_{i+1},...,a_{i+k}),a_{i+k+1},...,a_n,v,\vec{b})(w) \nonumber \\
    &+& \sum_{\stackrel{0 \leq i \leq n-1, 1\leq j \leq m}{k \geq 1}} \phi_0(a_1,...,a_i,m_k(a_{i+1},...,a_n,v,b_1,...,b_j),b_{j+1},...,b_m)(w) \nonumber \\
    &+& \sum_{\stackrel{0 \leq j \leq m-1}{k \geq 1}} \phi_0(\vec{a},v,b_1,...,b_j,m_k(b_{j+1},...,b_{j+k}),b_{j+k+1},...,b_m)(w) \nonumber \\
    &-& \sum_{\stackrel{0 \leq j \leq m}{k \geq 1}} \phi_0(b_1,...,b_j,m_k(b_{j+1},...,b_{j+k}),b_{j+k+1},...,b_m,w,\vec{a})(v) \nonumber \\
    &-& \sum_{\stackrel{0 \leq i \leq n-1}{k \geq 1}} \phi_0(\vec{b},w,a_1,...,a_i,m_k(a_{i+1},...,a_{i+k}),a_{i+k+1},...,a_n)(w) \nonumber \\
        &-& \sum_{\stackrel{0 \leq s \leq n-1,1 \leq j \leq m}{k \geq 1}} \phi_0(b_{j+1},...,b_m,w,a_1,...,a_s)(m_k(a_{s+1},...,a_n,v,b_1,...,b_j)) \nonumber
  \end{eqnarray}

  \begin{eqnarray}\label{eqt:12}
    & & (m^* \circ \widehat{\wtd{\phi_0}})(\vec{a},\UL{v},\vec{b})(w) \\
     &=&  \sum_{\stackrel{0 \leq j \leq m-1, 1 \leq i \leq n}{k \geq 1}} \phi_0(b_1,...,b_j,m_k(b_{j+1},...,b_m,w,a_1,...,b_i),a_{i+1},...,a_n)(v) \nonumber \\
     &-& \sum_{\stackrel{1 \leq i \leq n, 0 \leq j \leq m-1}{k \geq 1}} \phi_0(a_{i+1},...,a_n,v,b_1,...,b_j)(m_k(b_{j+1},...,b_m,w,a_1,...,a_i)) \nonumber
  \end{eqnarray}
  On the other hand,
  \begin{eqnarray}
    & & b^* \phi_0 (\vec{a},v,\vec{b})(w)-b^* \phi_0 (\vec{b},w,\vec{a})(v) \nonumber \\
    &=& (\ref{eqt:1})  -  (\ref{eqt:12}) \nonumber \\
    &+& \sum \phi_0(a_i,...,a_l)(m(a_{l+1},...,a_k,v,\vec{b},w,a_1,...,a_{i-1})) \label{eqt:2}\\
    &+& \sum \phi_0(b_j,...,b_p)(m(b_{p+1},...,b_n,w,\vec{a},v,b_1,...,b_{j-1})) \label{eqt:3}\\
    &-& \sum \phi_0(b_j,...,b_p)(m(b_{p+1},...,b_n,w,\vec{a},v,b_1,...,b_{j-1})) \label{eqt:4}\\
    &-& \sum \phi_0(a_i,...,a_l)(m(a_{l+1},...,a_k,v,\vec{b},w,a_1,...,a_{i-1})) \label{eqt:5}
  \end{eqnarray}

  Note that the terms (\ref{eqt:2})-(\ref{eqt:5}) cancel out by themselves.
  By combining the above results, the lemma \ref{step1} is obtained.
\end{proof}
\begin{proof}
Now  we prove lemma \ref{Bvanish}.
  \begin{eqnarray*}
    B^* \gamma (c_1,...,c_n)(c_{n+1}) &=& \sum_{\sigma \in \mathbb{Z} / n\mathbb{Z}} \gamma(c_{\sigma(1)},...,c_{\sigma(n+1)})(1)
  \end{eqnarray*}
  Hence, $B^* \gamma(c_1,...,c_n)(c_{n+1}) = B^* \gamma (c_{\sigma(1)},...,c_{\sigma(n)})(c_{\sigma(n+1)})$ for any $\sigma \in \mathbb{Z}/ n \mathbb{Z}$.
  In particular, $B^* \phi_1 (\vec{b},w,\vec{a})(v) - B^* \phi_1 (\vec{a},v,\vec{b})(w)=0$.
\end{proof}
\begin{remark}\label{rmkN}
In the case that we use the (dual of) Tsygan's bicomplex, instead of $(b^*,B^*)$-complex to define
the negative cyclic cohomology, the same proposition holds true:
this is because the equation \ref{eqtilde} still holds. If we have $b^*\phi_0 = N^*\phi_1'$ instead for
the symmetrization operator $N$, then the proof above shows that $\WT{N^*\phi_1}$ also should vanish
as in the case of $B$ using the same symmetry argument.
\end{remark}

Hence
if $\wtd{\phi}_{0,0}$ is nondegenerate on $H^{\bullet}(A)$, then $\phi$ indeed gives a strong homotopy inner product. We call such a $\phi \in C^{\bullet}_{-}(A,A^*)$ be \textit{homologically nondegenerate}(H.N. for short below).

\begin{lemma}\label{lemma:corr}
We have the following 1-1 correspondences.

\begin{tabular}{|c|c|}\hline
     $\AI$-algebra $A$ & Formal noncommutative manifold $X$ \\\hline \hline
    skew-sym. $\AI$-bimod. map $\psi:A \to A^*$ & $ \omega_\psi \in \Omega_{cyc}^2(X)$ with $L_Q \omega_\psi =0$ \\\hline
    $\eta \in C^\bullet(A,A^*)$ & $\alpha_\eta \in \Omega^1_{cyc}(X)$\\\hline
    $\widetilde{\eta}$ & $d \alpha_\eta$ \\\hline
    S.H.I.P. $\phi:A \to A^*$  & 
    H.N. $\omega_\phi \in \Omega_{cyc}^2(X), d \omega_\phi=0= \CL_Q \omega_\phi$ \\\hline
\end{tabular}
%
%
%
%There exist one to one correspondence between skew-symmetric $\AI$-bimodule maps $\psi$ and $L_Q$-closed two forms in $\Omega_{cyc}^2(X)$.
%For Hochschild cocycle $\eta \in C^\bullet(A,A^*)$ and for the corresponding $\alpha_\eta \in \Omega^1_{cycl}(X)$ in the Lemma \ref{lem:compare}, the $\AI$-bimodule map 
%$\widetilde{\eta}$ obtained from
%the previous proposition corresponds to $d \alpha_\eta$.
%And strong homotopy inner product $\phi:A \to A^*$  corresponds to 
% homologically nondegenerate 2-form  $\omega_\phi$ such that $d \omega_\phi=0$ and $\CL_Q \omega_\phi=0$
\end{lemma}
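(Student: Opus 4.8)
The plan is to build the dictionary one row at a time, using as foundations the earlier (unnamed) lemma identifying a constant cyclic inner product with a constant $\CL_Q$-closed symplectic form, and Lemma \ref{lem:compare} identifying $\eta \in C^\bullet(A,A^*)$ with the one-form $\alpha_\eta$ and $b^*\eta$ with $\CL_Q\alpha_\eta$. The central device is an explicit encoding of a skew-symmetric $\AI$-bimodule map $\psi:A\to A^*$ as a cyclic two-form: to each component $\psi_{k,l}(a_1,\dots,a_k,\underline{v},b_1,\dots,b_l)(w)$ I attach the cyclic monomial $(x^{a_1}\cdots x^{a_k}\,dx^v\,x^{b_1}\cdots x^{b_l}\,dx^w)_c$, the two differentials marking the module slot $v$ and the evaluation slot $w$, and sum over all inputs with the structure constants of $\psi$ as coefficients. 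Reading the table from top to bottom, Rows 1 and 3 are the two nontrivial identifications and Row 4 is their combination with the non-degeneracy bookkeeping.

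For Row 1, I would first check that the encoding $\psi \mapsto \omega_\psi$ is well defined precisely when $\psi$ is skew-symmetric: the cyclic rotation of the monomial that interchanges the roles of the two $dx$'s (moving $dx^w$ into the module position) carries exactly the Koszul sign appearing in the skew-symmetry axiom $\psi_{k,l}(\vec a,\underline v,\vec b)(w)=-(-1)^{Kos}\psi_{l,k}(\vec b,\underline w,\vec a)(v)$ of Definition \ref{def:cyc2}. Thus skew-symmetric bimodule maps are in bijection with cyclic two-forms. It then remains to show that the $\AI$-bimodule equation $\psi\circ\widehat m = m^*\circ\widehat{\psi}$ is equivalent to $\CL_Q\omega_\psi=0$. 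This is the general-coefficient version of the computation in the lemma preceding Lemma \ref{lem:compare}: expanding $\CL_Q\omega_\psi=[d,i_Q]\omega_\psi$ inserts $Q$ (hence the structure constants $m^\bullet_{\bullet}$) into each of the two $dx$ slots and into each $x$ slot, and after grouping cyclic monomials the coefficient of a fixed monomial is exactly the bimodule relation evaluated on the corresponding inputs, once for the left/right action terms and once for the differential $m^*$ term.

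For Row 3, I would compute $d\alpha_\eta$ directly. Writing $\alpha_\eta=\sum \eta^j_{i_1\cdots i_n}\,x^{i_1}\cdots x^{i_n}\,dx^j$, the de Rham differential converts exactly one of the $x^{i_l}$ into $dx^{i_l}$, producing a signed sum $\sum_l \pm\, x^{i_1}\cdots dx^{i_l}\cdots x^{i_n}\,dx^j$ of cyclic two-forms carrying two differentials. Using cyclic rotation to bring each term into the standard form of the encoding above, the position $l$ of the new differential determines the splitting of the arguments into the left block $\vec a$, the module slot, the right block $\vec b$, and the evaluation slot; the alternating sign produced by $d$ together with the cyclic rotation reproduces precisely the two terms of $\widetilde\eta(\vec a,\underline v,\vec b)(w)=\eta(\vec a,v,\vec b)(w)-\eta(\vec b,w,\vec a)(v)$. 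Hence $\omega_{\widetilde\eta}=d\alpha_\eta$ under the same encoding, and combined with Lemma \ref{lem:compare} this is consistent with $\widetilde{b^*\eta}\leftrightarrow \CL_Q d\alpha_\eta$ since $d$ and $\CL_Q$ commute.

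Finally, Row 4 follows by assembling the previous rows. A strong homotopy inner product is by Definition \ref{def:cyc2} a skew-symmetric $\AI$-bimodule map, so Row 1 already produces a $\CL_Q$-closed cyclic two-form $\omega_\phi$, and homological non-degeneracy of $\phi$ translates into the \textit{H.N.} (non-degeneracy in cohomology) label on $\omega_\phi$. The remaining content is to identify the closedness axiom (2) of Definition \ref{def:cyc2} with the de Rham closedness $d\omega_\phi=0$: expanding $d\omega_\phi$ turns one more $x$ into a $dx$, yielding a cyclic three-form whose vanishing on a monomial is, after cyclic rotation of the three differentials, exactly the three-term alternating relation $\phi(\dots,\underline{a_i},\dots)(a_j)+(-1)^{Kos}\phi(\dots,\underline{a_j},\dots)(a_k)+(-1)^{Kos}\phi(\dots,\underline{a_k},\dots)(a_i)=0$. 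I expect this last identification to be the main obstacle: matching the three-term cyclic closedness relation with $d\omega_\phi=0$ requires tracking how the three $dx$-markers are permuted under the cyclic identification of three-forms and verifying that the induced Koszul signs agree with those in axiom (2), which is the most delicate sign computation of the lemma. The converse directions — recovering $\psi$ from a $\CL_Q$-closed cyclic two-form, and a closedness-satisfying $\phi$ from a $d$-closed one — are then immediate by inverting the encoding.
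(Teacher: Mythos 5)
Your proposal follows essentially the same route as the paper: the same monomial encoding $\psi_{k,l}(e_{i_1},\dots,\underline{e_j},\dots)(e_n)\mapsto x^{i_1}\cdots dx^j\cdots dx^n$ with skew-symmetry accounting for the cyclic ambiguity of the two $dx$'s, the same direct computation with cyclic rotation to get $\omega_{\widetilde\eta}=d\alpha_\eta$, and the same assembly of the rows (reducing the bimodule/$\CL_Q$ and closedness/$d$-closedness equivalences to computations in the style of Proposition \ref{prop:first} and Lemma \ref{lem:compare}). The paper in fact leaves the $\CL_Q$-closedness and $d$-closedness verifications as "tedious but elementary," so your sketch is, if anything, slightly more explicit about where the sign bookkeeping lives.
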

\begin{proof}
Given a collection of maps $\psi_{k,l}:A^{\otimes k} \otimes \UL{A} \otimes A^{\otimes l} \rightarrow A^*$, we assign a cyclic 2-form  $$\omega_{\psi}=\sum (\psi_{k,l}(e_{i_1},...,e_{i_k},\UL{e_j},e_{j_1},...,e_{j_l})(e_n))x^{i_1}\cdots x^{i_k} dx^j x^{j_1}\cdots x^{j_l} dx^n$$ for basis elements $e_*$ (as in \cite{C}). Skew-symmetry is needed as we cannot tell the order of $dx^j, dx^n$ in the expression for cyclic forms.

We omit the proof of the correspondence of $L_Q$-closedness and $\AI$-bimodule property. This can be
carried out similarly as in the proof of Prop \ref{prop:first} and Lemma \ref{lem:compare} and it is tedious but elementary computations.

We show that $\omega_{\widetilde{\eta}}=d\alpha_{\eta}$. Observe that
\begin{eqnarray*}
  \widetilde{\eta}(e_{i_1},...,e_{i_k},\UL{e_j},e_{j_1},...,e_{j_l})(e_n) &=& \eta(e_{i_1},...,e_k,e_j,e_{j_1},...,e_{j_l})(e_n) \\
  & & -\eta(e_{j_1},...,e_{j_l},e_n,e_{i_1},...,e_{i_k})(e_j) \\
  &=& \eta_{i_1,...,i_k,j,j_1,...,j_l}^n - \eta_{j_1,...,j_l,n,i_1,...,i_k}^j,
\end{eqnarray*}
so $$\omega_{\widetilde{\eta}}=\sum(\eta_{i_1,...,i_k,j,j_1,...,j_l}^n - \eta_{j_1,...,j_l,n,i_1,...,i_k}^j)x^{i_1}\cdots x^{i_k}dx^j x^{j_1} \cdots x^{j_l} dx^n$$ is the 2-form corresponding to $\widetilde{\eta}$.

By definition,
\begin{equation}\label{eq:dalpha}
d \alpha_{\eta}=\sum_l \sum \eta_{i_1,...,i_n}^j x^{i_1}\cdots dx^{i_l} \cdots x^{i_n} dx^j.\end{equation}
Note that in $\Omega_{cyc}$, we have (up to Koszul sign)
 $$x^{i_{l+1}} \cdots x^{i_n} dx^j x^{i_1} \cdots x^{i_{l-1}} dx^{i_l} = -x^{i_1} \cdots dx^{i_l} \cdots x^{i_n} dx^j,$$ and hence (\ref{eq:dalpha}) reduces to
$$d \alpha_{\eta} = \sum_l \sum(\eta_{i_1,...,i_n}^j - \eta_{i_{l+1},...,i_n,j,i_1,...,i_{l-1}}^{i_l})x^{i_1}\cdots dx^{i_l} \cdots x^{i_n} dx^j.$$
Then we have $\omega_{\eta}=d\alpha_{\eta}$ by rearranging indices above.

Suppose that we are given a strong homotopy inner product $\phi:A \to A^*$.
Consider the corresponding two form $\omega_{\phi}$ from the above. It is not hard to check that the
closedness condition is equivalent to $d_{cyc}\omega_{\phi} = 0$.
Hence, as we proved that $\CL_Q$-closedness of $\omega_\phi$ is equivalent to $\phi$ being  $\AI$-bimodule map,
so we obtain the last claim.
\end{proof}

\section{Construction of an automorphism}\label{sec:details}
In this section, we prove that two strong homotopy inner products obtained two negative cyclic cocycles in the
same homology class are indeed equivalent to each other in the sense of \ref{equivst} (see also the comments at the end of the section \ref{sec:KS}).

First, we construct $\AI$-automorphisms from certain kinds of vector fields.
\begin{lemma}\label{lem:homfromv}
A formal vector field $v$ which satisfies $[Q,v]=0$ provides
an $\AI$-automorphism. Here $v$ is assumed to have length $\geq 2$.
(i.e. any non-trivial component of $v$ which is given by $f(x)\PD{i}$ satisfies
$order(f(x)) \geq 2$).
\end{lemma}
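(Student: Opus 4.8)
The plan is to exhibit the $\AI$-automorphism as the time-one flow of the vector field $v$. Since $v$ is a formal vector field with $[Q,v]=0$, I would consider the one-parameter family of coordinate transformations $\varphi_t = \exp(t\,\CL_v)$ acting on $\CO(X)=\kk\ll x_1,\dots,x_n\gg$, and set $\varphi = \varphi_1$. The key structural point is that a coordinate change on the formal manifold $X$ corresponds, under the dictionary of Section \ref{sec:noncomm}, to an $\AI$-homomorphism precisely when it intertwines the codifferential, i.e.\ when it commutes with $Q$; and conversely, an honest algebra automorphism of $\CO(X)$ fixing the base point gives an $\AI$-homomorphism via the change-of-coordinate formula (\ref{changeco}). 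So the whole task reduces to showing that (i) $\exp(\CL_v)$ is a well-defined automorphism of $\CO(X)$ in the adic topology, and (ii) it commutes with $Q$.

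First I would address well-definedness. The hypothesis that every nontrivial component of $v$ has the form $f(x)\PD{i}$ with $\mathrm{order}(f(x))\geq 2$ is exactly what guarantees convergence: applying $\CL_v$ to a monomial of order $k$ raises the order by at least $1$ (each substitution $x_i \mapsto f(x)$ replaces a degree-one generator by something of order $\geq 2$), so on the quotient $\CO(X)/\mathfrak{m}^N$ by the $N$-th power of the augmentation ideal, only finitely many terms of the exponential series $\sum_{j\geq 0}\tfrac{1}{j!}\CL_v^{\,j}$ survive. Hence the series converges in the adic topology and defines a continuous algebra endomorphism; its inverse is $\exp(-\CL_v)$, so it is an automorphism. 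I would also note that $v(x_0)=0$ (the base point is fixed) ensures no constant term is introduced, keeping us in the pointed setting $m_0=0$ assumed throughout.

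Next I would verify compatibility with $Q$. Because $v$ is a derivation of $\CO(X)$, so is $\CL_v$, and $\exp(\CL_v)$ is an algebra automorphism. The $\AI$-homomorphism condition $\WH{f}\circ\WH{d}=\WH{d}\circ\WH{f}$ dualizes to the statement that the coordinate change commutes with $Q$. Now $[Q,v]=0$ gives $[Q,\CL_v]=0$ as operators (using the identities $[\CL_\xi,\CL_\eta]=\CL_{[\xi,\eta]}$ recorded in the de Rham subsection, so $[\CL_Q,\CL_v]=\CL_{[Q,v]}=0$, and $Q$ acts as $\CL_Q$ on functions), and commuting operators have commuting exponentials: $Q\circ\exp(\CL_v)=\exp(\CL_v)\circ Q$. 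Translating back through the dictionary, the automorphism $\exp(\CL_v)$ intertwines the codifferentials of the source and target $\AI$-structures, which is precisely the assertion that it is an $\AI$-automorphism.

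The main obstacle I anticipate is not conceptual but careful bookkeeping of convergence together with the sign/grading conventions ($\sharp$ and $|\cdot|'$) under which $\CL_v$ and $Q$ are defined; in particular one must confirm that the length condition $\mathrm{order}(f)\geq 2$ truly forces $\CL_v$ to be topologically nilpotent modulo each $\mathfrak{m}^N$ rather than merely order-nondecreasing. Extracting the explicit Taylor coefficients $h^i_{j_1,\dots,j_k}$ of the resulting $\AI$-homomorphism from the coordinate change $x_i\mapsto \exp(\CL_v)(x_i)$ via (\ref{changeco}) is then a routine unwinding that I would leave to the subsequent lemmas (\ref{homfromv2}) and Proposition \ref{prop:welldefined}, which the text indicates handle the finer points.
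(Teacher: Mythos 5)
Your proposal is correct, but it runs on the opposite side of the bar duality from the paper's own argument, so a comparison is in order. The paper works directly on the tensor coalgebra: it promotes $v$ to a coderivation $\WH{v}$ of $TV[1]$, forms $e^{\WH{v}}$, and proves in Lemma \ref{homfromv2} that $e^{\WH{v}}$ is a cohomomorphism by checking $(e^{\WH{v}}\otimes e^{\WH{v}})\circ\Delta=\Delta\circ e^{\WH{v}}$ via a shuffle count of the binomial coefficients $\frac{j!}{j_1!\,j_2!}$; the $\AI$-relation $\WH{d}\WH{f}=\WH{f}\WH{d}$ then drops out of $\WH{d}\WH{v}=\WH{v}\WH{d}$ exactly as your $Q\circ\exp(\CL_v)=\exp(\CL_v)\circ Q$ does. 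You instead exponentiate $\CL_v$ on $\CO(X)$ and argue that a derivation exponentiates to an algebra automorphism; this is the precise dual of the same combinatorial identity (Leibniz binomials in place of shuffles), and the paper itself proves your version later, in the unnumbered lemma of Section \ref{sec:details} showing $e^{\CL_v}(\alpha\cdot\beta)=e^{\CL_v}\alpha\cdot e^{\CL_v}\beta$ and matching the coordinate change $x^i\mapsto e^{\CL_v}x^i$ with the $\AI$-morphism of this lemma. What the paper's route buys is that the conclusion is literally an $\AI$-automorphism with no translation step: $\WH{f}=e^{\WH{v}}$ is a cohomomorphism commuting with $\WH{d}$, full stop. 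What your route buys is a cleaner convergence argument and immediate compatibility with the later symplectic computations. The one place where your write-up leans on something not yet fully established is the assertion that a continuous, base-point-fixing algebra automorphism of $\CO(X)$ commuting with $Q$ ``is'' an $\AI$-automorphism via (\ref{changeco}): unwinding that dictionary in the direction you need (dualizing back to a cohomomorphism of $BC$ with $f_1=\mathrm{id}$) is exactly the content of the paper's shuffle computation, so you have not avoided it, only relocated it. Your convergence argument (order of a monomial increases by at least one under each application of $\CL_v$, hence topological nilpotence modulo each power of the augmentation ideal) is sound and is in fact slightly more explicit than what the paper says on this point.
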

\begin{proof}
A formal vector field $v$ (as a derivation) corresponds to a coderivation, which we also call $v$, of tensor coalgebra $TV[1]$. Such $v$
is represented by a family of maps $v_k:A^{\otimes k} \to A$, and denote by $\WH{v}$ the coderivation
$$\WH{v}:TV[1] \to TV[1], \WH{v} = \sum_k \WH{v_k}.$$
where $\WH{v_k}$ is defined as in the definition of $\AI$-operation $\WH{m_k}$.
Corresponding to the condition $[Q,v]=0$ is the identity
\begin{equation}\label{dffd}
\WH{d} \circ \WH{v} = \WH{v} \circ \WH{d}.
\end{equation}

We define its exponential $e^{\WH{v}}$ as
\begin{equation}
e^{\WH{v}} = 1 + \WH{v} + \frac{1}{2!}\WH{v} \circ \WH{v} +  \frac{1}{3!}\WH{v} \circ \WH{v} \circ \WH{v} + \cdots = \sum_{k=0}^{\infty}
\frac{1}{k!}(\WH{v})^k
\end{equation}
One can check that the infinite sum makes sense due to the assumption on $v$.
Let $\pi:TV[1] \to V[1]$ be the natural projection to its component of tensor length one.
Then, we define
\begin{equation}\label{aihomocomp}
f := \pi \circ e^{\WH{v}} : TV[1] \to V[1].
\end{equation}

It is easy to check that one may write
$$f = id \circ \pi + v (\sum \frac{1}{k!} (\WH{v})^{k-1}) .$$
In fact, by the assumption on $v$, $f_1:V[1] \to V[1]$ is given by identity.
 
For example, we have
\begin{eqnarray*}
  & & e^{\WH{v}}(x_1 \otimes x_2 \otimes x_3) \\
  &=& x_1 \otimes x_2 \otimes x_3 + v(x_1 \otimes x_2)\otimes x_3 + x_1 \otimes v(x_2 \otimes x_3) \\
  & & + v(x_1 \otimes x_2 \otimes x_3) + \frac{v(v(x_1 \otimes x_2) \otimes x_3) + v(x_1 \otimes v(x_2 \otimes x_3))}{2} \\
  &=& f_1(x_1) \otimes f_1(x_2) \otimes f_1(x_3) + f_2(x_1 \otimes x_2) \otimes f_1(x_3)+f_1(x_1) \otimes f_2(x_2\otimes x_3) \\
  & & + f_3(x_1 \otimes x_2 \otimes x_3) \\
  &=& \WH{f}(x_1 \otimes x_2 \otimes x_3).
\end{eqnarray*}

In general, we have $\WH{f} = e^{\WH{v}}$, which we prove in the following lemma.
Now, the proof of the Lemma \ref{lem:homfromv} follows from the following lemma.
\end{proof}
\begin{lemma}\label{homfromv2}
$f$ defines an $\AI$-automorphism. More precisely,
we have $$\WH{f} = e^{\WH{v}}, \;\; \WH{d}\WH{f} = \WH{f}\WH{d}.$$
\end{lemma}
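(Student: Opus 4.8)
The plan is to prove the two assertions in sequence, since the second follows almost immediately once the first is established. The key identity to establish is $\WH{f} = e^{\WH{v}}$, where $\WH{f}$ denotes the coalgebra morphism canonically induced by the collection $\{f_k\}$ defined via $f = \pi \circ e^{\WH{v}}$. The natural approach is to exploit the universal property of the tensor coalgebra $TV[1]$: a coalgebra morphism into $TV[1]$ is \emph{uniquely} determined by its composition with the projection $\pi : TV[1] \to V[1]$ onto the length-one component. Thus, to prove $\WH{f} = e^{\WH{v}}$, I would first check that $e^{\WH{v}}$ is itself a coalgebra morphism, and then verify that $\pi \circ e^{\WH{v}} = f$, which holds by the very definition of $f$ in \eqref{aihomocomp}. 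Uniqueness then forces $\WH{f} = e^{\WH{v}}$.

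First I would verify that $e^{\WH{v}}$ is a morphism of coalgebras, i.e. that $\Delta \circ e^{\WH{v}} = (e^{\WH{v}} \otimes e^{\WH{v}}) \circ \Delta$. The clean way to see this is to recall that $\WH{v}$ is a \emph{coderivation} of $TV[1]$, meaning $\Delta \circ \WH{v} = (\WH{v} \otimes \mathrm{id} + \mathrm{id} \otimes \WH{v}) \circ \Delta$. Exponentiating a coderivation yields a coalgebra morphism — this is the standard fact that the exponential of a derivation is an algebra map, transported to the dual (coalgebra) setting. Concretely, one shows by induction that $\Delta \circ (\WH{v})^k = \sum_{i+j=k} \binom{k}{i} ((\WH{v})^i \otimes (\WH{v})^j) \circ \Delta$, and then sums the resulting binomial series, using $\sum_{i+j=k} \frac{1}{i!\,j!} = \frac{1}{k!}\binom{k}{i}$ appropriately, to obtain the product $e^{\WH{v}} \otimes e^{\WH{v}}$. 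The convergence of all infinite sums is guaranteed by the length hypothesis $\mathrm{order}(f(x)) \geq 2$ on the components of $v$: on any fixed tensor length $n$, only finitely many terms of $e^{\WH{v}}$ contribute, since each application of $\WH{v}$ strictly decreases tensor length.

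Once $\WH{f} = e^{\WH{v}}$ is in hand, the chain-map property $\WH{d}\,\WH{f} = \WH{f}\,\WH{d}$ follows directly from the hypothesis $[Q,v]=0$, which was translated in \eqref{dffd} into the commutation relation $\WH{d} \circ \WH{v} = \WH{v} \circ \WH{d}$. Since $\WH{d}$ commutes with $\WH{v}$, it commutes with every power $(\WH{v})^k$ and hence with the entire sum $e^{\WH{v}} = \sum_k \frac{1}{k!}(\WH{v})^k$, giving $\WH{d}\, e^{\WH{v}} = e^{\WH{v}}\, \WH{d}$, i.e. $\WH{d}\,\WH{f} = \WH{f}\,\WH{d}$. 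This is precisely the statement that $f$ is an $\AI$-morphism. That $f$ is an \emph{automorphism} follows from the observation already recorded in the proof of Lemma \ref{lem:homfromv}, namely that $f_1 = \mathrm{id}$ because the length hypothesis forces $v$ to contribute nothing to the length-one part; a morphism whose linear term is the identity is invertible, with inverse induced by $e^{-\WH{v}}$ (note $-v$ also satisfies $[Q,-v]=0$ and the same length condition).

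I expect the main obstacle to be the bookkeeping in establishing that $e^{\WH{v}}$ is a coalgebra morphism — specifically, marshalling the Koszul signs correctly in the coderivation identity for $\WH{v}$ and carrying them through the binomial induction, together with a careful justification that the formal sums converge on each graded piece. The commutation step and the automorphism conclusion are comparatively routine once the coalgebra-morphism identity is secured.
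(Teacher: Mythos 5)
Your proposal is correct and follows essentially the same route as the paper: establish that $e^{\WH{v}}$ is a cohomomorphism (the paper verifies $(e^{\WH{v}}\otimes e^{\WH{v}})\circ\Delta=\Delta\circ e^{\WH{v}}$ by applying both sides to a tensor and counting $(j_1,j_2)$-shuffles, which is exactly your binomial identity derived from the coderivation Leibniz rule), invoke the fact that a cohomomorphism is determined by its projection to $V[1]$ to get $\WH{f}=e^{\WH{v}}$, and then deduce $\WH{d}\WH{f}=\WH{f}\WH{d}$ from $\WH{d}\circ\WH{v}=\WH{v}\circ\WH{d}$. The convergence remark and the observation that $f_1=\mathrm{id}$ forces invertibility match what the paper records in Lemma \ref{lem:homfromv}.
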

\begin{proof}
We first show that $e^{\WH{v}}:TV \to TV$ satisfies the following identity
\begin{equation}\label{cohoeq}
(e^{\WH{v}} \otimes e^{\WH{v}} ) \circ \Delta = \Delta  \circ e^{\WH{v}}
\end{equation}
This would imply that $e^{\WH{v}}$ is a cohomomorphism, and it is well-known that such a cohomomorphism
is completely determined by its projection (\ref{aihomocomp}) (see for example \cite{T}) and
satisfies the identity $\WH{f} = e^{\WH{v}}$. 

To prove the identity, we apply (\ref{cohoeq}) to an expression $x_1\otimes \cdots \otimes x_k$. 
The left hand side of (\ref{cohoeq}) becomes
$$(e^{\WH{v}} \otimes e^{\WH{v}} ) \circ \Delta (x_1\otimes \cdots \otimes x_k)
= \sum_{i=1}^k \big(e^{\WH{v}}(x_1\otimes \cdots \otimes x_i) \otimes e^{\WH{v}}(x_{i+1}\otimes \cdots \otimes x_k) \big).$$
The right hand side becomes
$$ \Delta \circ e^{\WH{v}} (x_1\otimes \cdots \otimes x_k)
= \Delta \big( \sum_{j=0}^{\infty} \frac{1}{j!}(\underbrace{\WH{v} \circ \cdots \circ \WH{v}}_{j}(x_1\otimes \cdots \otimes x_k)) \big)$$
$$= \sum_{j=0}^{\infty}  \frac{1}{j!} \sum_{\stackrel{(j_1,j_2) shuffle}{j_1+j_2=j}} 
\big(\underbrace{\WH{v} \circ \cdots \WH{v}}_{j_1}\big) \otimes \big( 
   \underbrace{\WH{v} \circ \cdots \WH{v}}_{j_2} \big) \circ \Delta (x_1\otimes \cdots \otimes x_k).$$
The equality here is obtained by noting that $\Delta$ divides the tensor product into two parts.
Recall that the number of such shuffles are $\frac{j!}{j_1!j_2!}$ and hence the above expression becomes
$$= \sum_{j=0}^{\infty}  \sum_{j_1+j_2=j} \big(\frac{1}{j_1!} (\WH{v})^{j_1} \otimes 
 \frac{1}{j_2!} (\WH{v})^{j_2}\big) \circ \Delta (x_1\otimes \cdots \otimes x_k).$$
 This proves the claim.

From this, we have
$$\WH{d}\WH{f} = \WH{d}\circ e^{\WH{v}} = e^{\WH{v}}\circ \WH{d} =  \WH{f}\WH{d}.$$
by the identity (\ref{dffd}) above.
\end{proof}
\begin{remark}
The automorphism just defined is {\em not} the automorphism  to
transform $\omega + \CL_Q(d\alpha)$ to $\omega$ that is suggested in the Lemma \ref{lemma:vQ}.
In fact it is a first order approximation of the correct automorphism, and in the next proposition,
we show how to find the actual automorphism which transforms $\omega + \CL_Q(d\alpha)$ to $\omega$.
\end{remark}

In the section \ref{sec:negtoship}, we assigned a strong homotopy inner product to a negative cyclic cocycle. Now we prove that the assignment is also well-defined on the cohomology level up to equivalence of strong homotopy inner products.

\begin{prop}\label{prop:welldefined}
Let $A$ be a weakly unital compact $\AI$-algebra.
  If two negative cyclic cocycles $\phi$ and $\phi '$ give the same cohomology class, then $\widetilde{\phi}$ and $\widetilde{\phi '}$ are equivalent as strong homotopy inner products.
\end{prop}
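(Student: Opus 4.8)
The plan is to translate the statement into the symplectic language of Section~\ref{sec:KS} through the dictionary of Lemma~\ref{lemma:corr}, reduce to the infinitesimal statement of Lemma~\ref{lemma:vQ}, and then upgrade the resulting infinitesimal symmetry to an honest finite $\AI$-automorphism. First I would pass to the minimal model: since $A$ is compact, set $H=H^\bullet(A,m_1)$, pull back $\phi,\phi'$ and the associated strong homotopy inner products $\WT{\phi},\WT{\phi'}$ to the finite-dimensional formal manifold $X=X_H$. Under Lemma~\ref{lemma:corr} these correspond to two $\CL_Q$-closed and $d_{cyc}$-closed cyclic $2$-forms $\omega:=\omega_{\WT{\phi}}$ and $\omega':=\omega_{\WT{\phi'}}$. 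Because a coboundary does not change the induced pairing $\WT{\phi}_{0,0}=\WT{\phi'}_{0,0}$ on $H^\bullet(A)$, both forms have the same non-degenerate constant leading term on $H$, so the Darboux theorem applies to each.

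Next, since $\phi-\phi'$ is a negative cyclic coboundary, its $v^0$-component has the schematic form $\phi_0-\phi'_0=b^*\psi_0-B^*\psi_1$. Applying Lemma~\ref{lem:compare} and Lemma~\ref{lemma:corr} gives $\omega'-\omega=d\alpha_{\phi'_0-\phi_0}$, where the $b^*$-term contributes $d\CL_Q\alpha_{\psi_0}=\CL_Q(d\alpha_{\psi_0})$ and the $B^*$-term contributes a $d_{cyc}$-exact $1$-form that is annihilated by the outer $d$. Hence $\omega'-\omega=\CL_Q(d\alpha)$ with $\alpha:=\alpha_{\psi_0}$, which is precisely the hypothesis of Lemma~\ref{lemma:vQ}. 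That lemma then produces a formal vector field $v$ vanishing at the base point with $[Q,v]=0$ and $\CL_v\omega=\CL_Q(d\alpha)=\omega'-\omega$.

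The main obstacle is the passage from this first-order identity to the actual automorphism, exactly the point flagged in the Remark after Lemma~\ref{homfromv2}: the exponential $e^{\WH{v}}$ of Lemma~\ref{lem:homfromv} only realizes $\omega\mapsto\omega+\CL_v\omega$ to first order, while the genuine pullback contributes the higher-order error $\tfrac12\CL_v^2\omega+\cdots$. To obtain the exact transformation I would run a Moser-type argument in the formal category. Set $\omega_t=\omega+t(\omega'-\omega)$ for $t\in[0,1]$; each $\omega_t$ is $\CL_Q$-closed, and by the common leading term it is non-degenerate for all $t$. Solve $i_{v_t}\omega_t=-\CL_Q\alpha$ at each $t$, so that $\CL_{v_t}\omega_t=-\tfrac{d}{dt}\omega_t$; repeating the computation $\CL_Q\circ i_{v_t}\omega_t=-\CL_Q^2\alpha=0$ from the proof of Lemma~\ref{lemma:vQ} forces $[Q,v_t]=0$ for every $t$. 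Since each $v_t$ vanishes at the base point, its flow is well-defined order by order in the adic topology, and by the construction of Lemmas~\ref{lem:homfromv} and \ref{homfromv2} applied degree by degree it assembles into a genuine $\AI$-automorphism $\Phi=\Phi_1$ preserving $Q$ with $\Phi^*\omega'=\omega$.

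Finally, $\Phi$ is a cyclic coordinate change that fixes the $\AI$-structure and carries the constant-coefficient cyclic inner product of $\WT{\phi'}$ to that of $\WT{\phi}$ on the common minimal model $H$. Composing the quasi-isomorphisms supplied by Theorem~\ref{prop:shi} with $\Phi$ then yields the single cyclic minimal $\AI$-algebra $H$ together with quasi-isomorphisms to $A$ realizing the commuting diagram of Definition~\ref{equivst}, which establishes that $\WT{\phi}$ and $\WT{\phi'}$ are equivalent. I expect the genuinely delicate points to be the control of non-degeneracy of $\omega_t$ along the whole path and the verification that the formal flow of the time-dependent field $v_t$ preserves $Q$ at every order rather than merely infinitesimally, both of which require the finiteness afforded by the minimal model and the vanishing of $v_t$ at the base point.
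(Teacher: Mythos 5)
Your proposal follows the paper's skeleton up to the decisive step --- reduction to the minimal model, translation via Lemma \ref{lemma:corr} into two $\CL_Q$-closed symplectic forms differing by $\CL_Q(d\alpha)$, disposal of the $B^*$-part (you do this on the form side, the paper does it algebraically via Lemma \ref{Bvanish}; both work), and invocation of Lemma \ref{lemma:vQ} to produce $v$ with $[Q,v]=0$ and $\CL_v\omega=\omega'-\omega$ --- but then diverges exactly where the paper says the real work lies. The paper corrects the first-order defect of $e^{\WH{v}}$ by iteration: it computes $e^{\CL_v}(\omega+d\CL_Q\eta)=\omega+\sum_{k\geq 1}a_k(\CL_v)^k(d\CL_Q\eta)$, observes that the residual error has strictly higher order in the $x$-adic filtration, and composes countably many such exponentials, each fixing tensors of ever greater length, so that the infinite composition converges. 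You instead run a Moser-type argument with the path $\omega_t=\omega+t(\omega'-\omega)$ and a time-dependent field $v_t$ solving $i_{v_t}\omega_t=-\CL_Q\alpha$; your verifications that $\omega_t$ stays non-degenerate (common constant part) and that $[Q,v_t]=0$ for all $t$ are correct and mirror the computation in Lemma \ref{lemma:vQ}. The Moser route is conceptually cleaner and produces the automorphism ``in one stroke''; the paper's route uses only the time-independent exponential, for which Lemmas \ref{lem:homfromv} and \ref{homfromv2} supply a complete proof that the result is a cohomomorphism commuting with $\WH{d}$.

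The one place where your argument needs more than you have written is precisely the assembly of the flow of $v_t$ into a genuine $\AI$-automorphism. Lemmas \ref{lem:homfromv} and \ref{homfromv2} prove the cohomomorphism identity $(e^{\WH{v}}\otimes e^{\WH{v}})\circ\Delta=\Delta\circ e^{\WH{v}}$ only for the exponential of a \emph{fixed} coderivation, via the shuffle count $j!/(j_1!j_2!)$; the time-one flow of a time-dependent $v_t$ is not such an exponential, so you must separately show (order by order in the adic topology) that it is a coalgebra automorphism and that it intertwines $\WH{d}$ --- for instance by writing it as a limit of compositions $e^{\WH{v}_{t_N}\Delta t}\circ\cdots\circ e^{\WH{v}_{t_1}\Delta t}$, or by differentiating the cohomomorphism identity along the flow. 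This is fillable and standard, but it is exactly the kind of point the paper flags as subtle, so it should not be left implicit.
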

\begin{proof}
First, we pull-back all the related notions to the minimal model $H^\bullet(A,m_1)$, which is unital and
finite dimensional. By using the decomposition theorem of an $\AI$-algebra, suppose we have
 $A=H\oplus A_{lc}$, where $H$ is the minimal part and $A_{lc}$ is the linear contractible part of $A$.
Let $i:H\to A$ be the inclusion, which is also an $\AI$-quasi-isomorphism.

First,in the unital case, as the two cycles $\phi, \phi '$ are cohomologous, we may write
$\phi^{'} = \phi + (b^* + vB^*) \psi$. Hence may write for some $\eta$, $\gamma \in C^{\bullet}_{-}(A,A^*)$ 
$$\phi^{'}_0 = \phi_0 + b^* \eta + B^* \gamma.$$
Hence, the induced $\AI$-bimodule maps from the Prop. \ref{prop:first} satisfy
$\WT{\phi^{'}}=
 \widetilde{\phi}+\wtd{b^* \eta}+\wtd{B^* \gamma}$, but by lemma \ref{Bvanish}, we have  $\wtd{B^* \gamma}\equiv 0$, so $\WT{\phi^{'}} = \widetilde{\phi}+\wtd{b^* \eta}$.
In the weakly unital case, one can proceed similarly using Tsygan's bicomplex using the remark \ref{rmkN}.

Now, using the $\AI$-quasimorphism $i:H \to A$, we pull back $\WT{\phi}$ and $\WT{\phi^{'}}$ to $H$ by
\begin{equation}\label{defdiagram5}
\xymatrix{A \ar[d]_{\WT{\phi}} & H \ar[l]_{\tilde{i}} \ar[d]_{i^* \WT{\phi}} \\ A^* \ar[r]^{{\tilde{i}}^*} & H^* }
\end{equation}
to obtain $i^* \WT{\phi}$ and $i^* \WT{\phi^{'}}$. From the definition of the equivalence of strong homotopy 
inner products, it is enough to prove the equivalence between 
$i^* \WT{\phi}$  and $i^* \WT{\phi^{'}}$.

Using $i$, we can also pull-back the Hochschild cohomology classes by
$i^*:C^\bullet(A,A^*) \rightarrow C^*(H,H^*)$.
We claim that 
$$i^* \widetilde{b^*_A \eta} = \widetilde{i^* b^*_A \eta} = \widetilde{b^*_H i^* \eta}$$
Here, the first $i^*$ was used to pullback an infinity inner product, while the other $i^*$'s are for Hochschild cochains. The first equality is almost trivial, and the second one is given by following:
\begin{eqnarray*}
  & & i^* b^*_A \eta(a_1,...,a_k)(a_{k+1}) \\
  &=& \sum b^*_A \eta(i(a_1),...,i(a_k))(i(a_{k+1})) \\
  &=& \sum \eta(i(a_1),...,i(a_l),m_A(i(a_{l+1}),...,i(a_p)),i(a_{p+1}),...,i(a_k))(i(a_{k+1})) \\
  & & +\sum \eta(i(a_j),...,i(a_p))(m_A(i(a_{p+1}),...,i(a_{k+1}),i(a_1),...,i(a_{j-1}))) \\
  &=& \sum \eta(i(a_1),...,i(a_l),i(m_H(a_{l+1},...,a_p)),i(a_{p+1}),...,i(a_k))(i(a_{k+1})) \\
  & & +\sum \eta(i(a_j),...,i(a_p))(i(m_H(a_{p+1}),...,i(a_{j-1}))) \\
  &=& b^*_H i^* \eta(a_1,...,a_k)(a_{k+1}).
\end{eqnarray*}
Observe that in the third equality we used the fact $i \circ \widehat{m_H}=m_A \circ \widehat{i}$, i.e. $i$ is an $\AI$-homomorphism.

By using the results of \cite{C}, in fact, we can pull them back further similarly via the diagram
\begin{equation}\label{defdiagram6}
\xymatrix{H \ar[d]_{cyc} \ar[r]_{\tilde{g}} & H  \ar[d]_{ \WT{\phi}} \\ H^*  & \ar[l]^{{\tilde{g}}^*} H^* }
\end{equation}
to assume that the strong homotopy inner product $\WT{\phi}$ is in fact cyclic inner product.

Therefore, it is enough to prove the proposition for the minimal model $H$ with the cyclic inner product
$\WT{\phi}$ and it suffices to find an $\AI$-automorphism $f$ with the following commutative diagram:
\begin{equation}\label{defdiagram8}
\xymatrix{H \ar[d]_{\widetilde{\phi}} \ar@{.>}[r]^{f} & H \ar[d]^{\widetilde{\phi}+\wtd{b^* \eta}} \\
            H^*  & H^* \ar@{.>}[l]^{f^*}}
            \end{equation}
            
It is very hard to get such an automorphism $f$ at once, so we need to construct it recursively. The construction becomes more natural if we use the dual notion of all above, namely formal noncommutative calculus. In the dual context, $H$ corresponds to a formal noncommutative affine manifold $X$, and $\AI$-automorphism $f$ corresponds to the coordinate change of $X$ preserving $Q$ which is a vector field corresponding to the $\AI$-structure of $H$ as before.

Let $\omega=\sum \omega_{ij} dx^i dx^j$ be a closed cyclic 2-form on $X$ which corresponds to the cyclic inner product $\widetilde{\phi}$. We denote 
$$d\mathcal{L}_Q \eta=\sum a_{ij}dx^j dx^i + \sum_{|I\cup J| \geq 1}a_{ij,IJ} x^I dx^i x^J dx^j.$$
We claim that the coefficients $a_{ij}=0$ for all $i,j$:
By minimality of $H$, $Q = 0 + O(x^2)$, i.e. the constant and the linear part of $Q$ is zero, and 
this implies the claim.

So by  the nondegeneracy of $\omega$, we can construct a vector field $v=\sum v_i(x)\PD{i}$ such that $\CL_v \omega = -d\CL_Q \eta$ 
or $i_v\omega = - \CL_Q \eta$.
 Observe that $v = 0 + O(x^2)$ and
$\omega$ and $\omega + d\CL_Q \eta$ have the same constant part, or 
$$\omega+ d\CL_Q \equiv \omega + O(x^2).$$
Using $v$, we constructed the automorphism $f$ in the previous lemma. To check how much $f$ has transformed
$\omega+ d\CL_Q$, we proceed as follows using non-commutative calculus.

First, we denote $$e^{\CL_v} := Id + \CL_v+\frac{(\CL_v)^2}{2!}+\frac{(\CL_v)^3}{3!}+\cdots.$$
\begin{lemma}
Under change of coordinates $x^i \mapsto e^{\CL_v}x^i$,
any differential form $\beta$ transforms as 
$$\beta \mapsto e^{\CL_v} \beta.$$

In fact the coordinate change here corresponds to an $\AI$-isomorphism of the lemma \ref{lem:homfromv}
in the sense of the (\ref{changeco}).
\end{lemma}
\begin{proof}
This is easily seen as follows. Since this is trivial for coordinate functions and $e^{\CL_v}$ commutes with $d$, it suffices to show that $e^{\CL_v}(\alpha \cdot \beta)=e^{\CL_v}\alpha \cdot e^{\CL_v}\beta$ for any two differential forms $\alpha$ and $\beta$.
\begin{eqnarray*}
  e^{\CL_v}\alpha \cdot e^{\CL_v}\beta &=& \sum_{k \geq 0} \frac{(\CL_v)^k}{k!}\alpha \cdot \sum_{l \geq 0}\frac{(\CL_v)^l}{l!}\beta \\
  &=& \sum_{k,l\geq 0} \frac{(\CL_v)^k}{k!}\alpha \cdot \frac{(\CL_v)^l}{l!}\beta
\end{eqnarray*}
and
\begin{eqnarray}
  e^{\CL_v}(\alpha \cdot \beta) &=& \sum_{k\geq 0} \frac{(\CL_v)^k}{k!}\alpha \cdot \beta \nonumber \\
  &=& \sum_{k \geq l \geq 0} \frac{1}{k!}(\CL_v)^{k-l} \alpha \cdot (\CL_v)^l \beta \cdot \frac{k!}{(k-l)!l!}. \label{eq:elv}
\end{eqnarray}
In (\ref{eq:elv}), we used that $\CL_v$ is a derivation, and $\displaystyle \frac{k!}{(k-l)!l!}$ means the number of $(k-l,l)$-shuffles. Hence we get the desired result.

To prove the second claim, let $f:H \rightarrow H$ be an $\AI$-automorphism such that
$$f(e_{i_1},...,e_{i_k})=\sum_j f_{i_1,...,i_k}^j e_j.$$
Then the coordinate change associated to $f$ is given by
$$x^j \mapsto \sum f_{i_1,...,i_k}^j x^{i_1} \cdots x^{i_k},$$
where $\{x^j\}$ is the dual coordinate of $\{e_j\}$.

Now let $f$ be given as in lemma 7.1, i.e. $f = id \circ \pi + v (\sum \frac{1}{k!} (\WH{v})^{k-1}) .$ As usual, let $$v(e_{i_1},...,e_{i_k})=\sum_j v_{i_1,...,i_k}^j e_j,$$
and let $v^j(e_{i_1},...,e_{i_k}):=v_{i_1,...,i_k}^j e_j.$ Then
\begin{equation*}\label{eq:fk}
f_k(e_{i_1},...,e_{i_k})=\sum_{1 \leq l \leq k-1}\frac{1}{l!}v \circ \widehat{v}^{l-1}(e_{i_1},...,e_{i_k}).
\end{equation*}
As above, let \begin{equation}\label{eq:fj} f^j_k(e_{i_1},...,e_{i_k}):=\sum_{1 \leq l \leq k-1}\frac{1}{l!}v^j \circ \widehat{v}^{l-1}(e_{i_1},...,e_{i_k}).\end{equation}

Finally, compare the coefficient of the $l$-th summand of (\ref{eq:fj}) and that of $\displaystyle \frac{(\CL_v)^l}{l!}x^j$, then we will easily get the result.
\end{proof}

Hence, this coordinate change gives us
$$\omega^{(1)}:=\omega+d\CL_Q \eta \mapsto \omega^{(2)}:=e^{\CL_v}\omega + e^{\CL_v}(d\CL_Q \eta),$$
\begin{eqnarray*}
  e^{\CL_v}\omega + e^{\CL_v}d\CL_Q \eta &=& (\omega + \CL_v \omega + \sum_{k\geq 2}\frac{1}{k!}(\CL_v)^k \omega)+ (d\CL_Q \eta + \sum_{k \geq 1}\frac{1}{k!}(\CL_v)^k d\CL_Q \eta) \\
  &=&\omega+\sum_{k\geq 2} \frac{1}{k!} (\CL_v)^{k-1} \CL_v \omega + d\CL_Q \sum_{k \geq 1}\frac{1}{k!}(\CL_v)^k \eta \\
  &=&\omega+\sum_{k\geq 2} \frac{1}{k!} (\CL_v)^{k-1} (-d\CL_Q \eta) + d\CL_Q \sum_{k \geq 1}\frac{1}{k!}(\CL_v)^k \eta \\
  &=&\omega+d\CL_Q \sum_{k \geq 2} (-\frac{1}{k!}(\CL_v)^{k-1} \eta)+d\CL_Q \sum_{k\geq 1}\frac{1}{k!}(\CL_v)^k \eta \\
  &=&\omega+d\CL_Q \sum_{k\geq 1} a_k (\CL_v)^k \eta \\
   &=&\omega+ \sum_{k\geq 1} a_k (\CL_v)^k (d\CL_Q \eta )
\end{eqnarray*}
for some numbers $a_k \in k$.
We emphasize that for the second and the fourth identities, we used lemma \ref{lemma:vQ} so that $[\CL_Q,\CL_v]=\CL_{[Q,v]}=0$.

Note that the term $d\CL_Q \eta$ changed into $\sum_{k\geq 1} a_k (\CL_v)^k (d\CL_Q \eta )$.
The operation $\CL_v = d \circ i_v + i_v \circ d$ increase the number of formal variable $x^i$'s in the
expression at least by one.

Hence, we have
$$\omega^{(2)} \equiv \omega + O(x^3).$$
By repeating the same procedure, we can  transforms $\omega + d\CL_Q \eta$ into $\omega$ via
countably many procedures.
We remark that the infinite composition of such automorphism is well-defined as the automorphism
at the step $(k)$ will fix the tensor product of length up to $(k)$.
This proves the proposition.
\end{proof}
Summarizing this provides proof of the Theorem \ref{thm}.

\section{A connection to the Kontsevich-Soibelman's result}\label{sec:compare}
In \cite{KS}, Kontsevich-Soibelman has provided the formula for the cyclic inner product on 
the minimal model using the trace, and we show that it agrees with our formula.

Namely, we have two ways to get cyclic inner products on $H^{\bullet}(A)$ from given a homologically nondegenerate negative cyclic cocycle $\phi$. Namely, for $a,b \in H^{\bullet}(A)$, we may consider $\wtd{\phi}(a)(b)=\phi(a)(b)-\phi(b)(a)$ as in proposition \ref{prop:first}, or consider $\omega (a)(b)=Tr_{c[\phi]}(m_2(a,b))=Tr_{[B^* \phi_0]}(m_2(a,b))$ as in \cite{KS}. $Tr_{[\eta]}:A/[A,A] \to k$ for $[\eta]\in HC^{\bullet}$ is given by
 $$Tr_{[\eta]}(a)=\eta_0|_{A^*}(a)$$
  (Recall that $\eta_0 \in \displaystyle C^{\bullet}(A,A^*)=\bigoplus_{n \geq 0}\mathrm{Hom}(A^{\otimes n},A^*)=\bigoplus_{n \geq 1}\mathrm{Hom}(A^{\otimes n},k)$).
\begin{prop}
  Let $\phi$ be a negative cyclic cocycle of $A$ with whose zeroth column part is $\phi_0$.
  Then $Tr_{[B^* \phi_0]}(m_2(\cdot, \cdot)) = \wtd{\phi}(\cdot)(\cdot)$.
\end{prop}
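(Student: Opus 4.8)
The plan is to unwind both sides to explicit scalar expressions in the single component $(\phi_0)_1 \in \mathrm{Hom}(A[1],A^*)$ and then to identify them using the fact, established in Proposition \ref{prop:first} together with Lemma \ref{Bvanish}, that $\wtd{\phi_0}$ is an \emph{honest} $\AI$-bimodule map, evaluated on a minimal two-input configuration with the unit inserted in the dual slot. Throughout I would first pull everything back to the unital minimal model $H^\bullet(A,m_1)$, as is done elsewhere in the paper, so that $m_1=0$ and the unit $I$ is strict; this removes the bulk of the auxiliary terms before any computation begins.

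First I would compute the left-hand side. By definition $Tr_{[B^*\phi_0]}(c)=(B^*\phi_0)|_{A^*}(c)$ is the length-zero ($A^*$-valued) part of the cyclic cocycle $B^*\phi_0$ evaluated at $c$. Using the explicit formula $B^*f(a_1,\ldots,a_n)=\sum_i f(a_i,\ldots,a_{i-1})(I)$, this length-zero part is produced from the length-one component $(\phi_0)_1$; since $\phi_0$ is a \emph{reduced} cochain it vanishes whenever a Hochschild slot carries the unit, so only a single term survives and one gets $Tr_{[B^*\phi_0]}(c)=(\phi_0)_1(c)(I)$. Setting $c=m_2(a,b)$ yields $Tr_{[B^*\phi_0]}(m_2(a,b))=(\phi_0)_1(m_2(a,b))(I)$.

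Next I would evaluate the bimodule-map identity $\wtd{\phi_0}\circ\WH{m}=m^*\circ\widehat{\wtd{\phi_0}}$ on the element with one left input $a$, module element $\underline{b}$, no right inputs, and dual argument $I$. On the minimal model $\WH{m}(a,\underline{b})=\underline{m_2(a,b)}$, so the left-hand side is $\wtd{\phi_0}_{0,0}(m_2(a,b))(I)$, which by reducedness equals $(\phi_0)_1(m_2(a,b))(I)$ --- exactly the trace expression above. On the right-hand side, expanding $\widehat{\wtd{\phi_0}}$ and applying the canonical dual bimodule structure $d^*$ of \eqref{def3}, the only surviving contribution comes from $d^*_{1,0}(a,\underline{\wtd{\phi_0}_{0,0}(b)})(I)=\pm\,\wtd{\phi_0}_{0,0}(b)(m_2(I,a))$, which by the unit axiom \eqref{unit} reduces to $\pm\,\wtd{\phi_0}_{0,0}(b)(a)=\mp\,\wtd{\phi}(a)(b)$; the remaining $d^*_{0,0}$ term dies because $m_1(I)=0$. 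Comparing the two sides gives $(\phi_0)_1(m_2(a,b))(I)=\wtd{\phi}(a)(b)$ up to an overall sign, which is the claim.

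The step I expect to be the main obstacle is the sign bookkeeping: tracking the Koszul signs through $\WH{m}$, through the coproduct defining $\widehat{\wtd{\phi_0}}$, and through the definition \eqref{def3} of $d^*$, so that the surviving term assembles with exactly the right global sign rather than its negative. A secondary point requiring care is the justification of the reduction to the unital minimal model and, in the genuinely weakly-unital situation, the replacement of the $(b^*,B^*)$-argument by the Tsygan bicomplex as in Remark \ref{rmkN}, so that the insertion of $I$ and the vanishing of all non-$m_2$ terms remain legitimate.
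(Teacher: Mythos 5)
Your proposal is correct and follows essentially the same route as the paper: both proofs reduce the claim to the single identity obtained by evaluating the coboundary of $\phi_0$ on the unit together with $a,b$, with all extra terms killed by reducedness of the cochain and the unit axioms, leaving exactly $\phi_0(a,b)-(-1)^{|a|'|b|'}\phi_0(b,a)-\phi_0(1,m_2(a,b))=0$. The only packaging difference is that you extract this identity from the bimodule equation $\wtd{\phi_0}\circ\WH{m}=m^*\circ\widehat{\wtd{\phi_0}}$ of Proposition \ref{prop:first} evaluated at $(a,\underline{b})$ against $I$, whereas the paper expands $b^*\phi_0(1,a,b)=B^*\psi(1,a,b)=0$ directly; by equation \eqref{eqtilde} these are the same computation, and the paper's direct expansion also fixes the overall sign that your sketch leaves undetermined.
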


\begin{proof}
  We identify cocycles in $\bigoplus_{n \geq 1}Hom(A^{\otimes n},k)$.
  For $a,b\in H^{\bullet}(A)$,
  \begin{eqnarray*}
    & & Tr_{[B^* \phi_0]}(m_2(a,b)) \\
    &=& B^* \phi_0 (m_2(a,b)) \\
    &=& \phi_0(1,m_2(a,b)).
  \end{eqnarray*}
  A priori, we have $b^*\phi(1,a,b)=B^*\psi(1,a,b)$ for some hochschild cochain $\psi$ because $\phi$ is a cocycle. Clearly the right hand side is zero. On the other hand,
  \begin{eqnarray*}
    & & b^* \phi_0 (1,a,b) \\
    &=& \phi_0(\widehat{b}(1,a,b)) \\
    &=& \phi_0(m_2(1,a),b)+(-1)^{1 \cdot 1}\phi_0(1,m_2(a,b))+(-1)^{|b|'(|a|'+1)}\phi_0(m_2(b,1),a) \\
    &=& \phi_0(a,b)-\phi_0(1,m_2(a,b))+(-1)^{|a|'|b|'+|b|'+|b|}\phi_0(b,a). \\
  \end{eqnarray*}
  Hence $Tr_{[B^* \phi]}(m_2(a,b))=\phi_0(1,m_2(a,b))=\phi_0(a,b)-(-1)^{|a|'|b|'}\phi_0(b,a)$ as we desired.
\end{proof}

We remark that a minimal model of an $\AI$-algebra also has many automorphisms which do not preserve the $\AI$-structure and the cyclic structure, hence given an arbitrary minimal model, one can {\em not} assume that the trace as above provides the cyclic inner product of the given minimal model. Rather, \cite{KS} proves the existence of one minimal model which is cyclic with respect to the trace. Our formula provides a diagram to connect cyclic structure, (negative) cyclic cohomology class and the related $\AI$-structures.

We also remark that the homological non-degeneracy of cyclic cohomology class $\phi$ does {\it not} imply that $\phi$ is a non-trivial
cohomology class. For example, there exists an $\AI$-algebra with  trivial $m_1$-homology, but equipped with cyclic inner product.
In such a case, cyclic cohomology can be shown to be trivial using the spectral sequence arguments with the length filtration.

\section{Gapped filtered cases}\label{sec:filter}
Gapped filtered $\AI$-algebras are introduced by Fukaya, Oh, Ohta and Ono in their construction of gapped filtered $\AI$-algebra of Lagrangian submanifold. For the gapped filtered $\AI$-algebras, many of the results in this paper remain true as it will be explained. But there exists some subtlety in filtered notions, as sometimes non-negativity of the energy from the filtration is needed.  For example, the Darboux theorem 
in the general form does not hold true, but only for non-negative symplectic forms.

\subsection{Filtered $\AI$-algebras}
We recall the notion of gapped filtered $\AI$-algebra, and we refer readers to \cite{FOOO} for full details.
To consider $\AI$-algebras arising from the study of Lagrangian submanifolds or in general pseudo-holomorphic curves,
one considers filtered $\AI$-algebras over Novikov rings, where the filtration is given by the energy of pseudo-holomorphic curves.
Here Novikov rings are, for a ring $R$
(here $T$ and $e$ are formal parameters)
$$\NOV = \{ \sum_{i=0}^\infty a_i T^{\lambda_i}e^{q_i} | \; a_i \in R,\;\lambda_i \in \RR,\; q_i \in \ZZ, \; \lim_{i \to \infty} \lambda_i = \infty \}$$
$$\NOVO = \{ \sum_i a_i T^{\lambda_i}e^{q_i} \in \NOV | \lambda_i \geq 0 \}.$$

When we take dualizations, it is convenient to work with Novikov fields. The above rings $\NOV,\NOVO$ are not fields but one can forget the formal parameter $e$ (and work with $\ZZ/2$ grading only) and work with the following Novikov fields
\begin{equation}\label{def:lambda}
\Lambda =\{ \sum_{i=0}^\infty a_i T^{\lambda_i} | \; a_i \in \kk,\;\lambda_i \in \RR,\; \; \lim_{i \to \infty} \lambda_i = \infty \},\;\;
\Lambda_0 =\{ \sum_{i=0}^\infty a_i T^{\lambda_i} \in \Lambda| \lambda_i \geq 0 \}.
\end{equation}
Here, we consider a field $\kk$ containing $\QQ$, and there also exist another choice $\NOV^{(e)}$ in \cite{C}.
We remark that in most of the construction of \cite{FOOO}, they work with $\NOVO$ and only when one needs to work with $\NOV$, they take tensor product  $\otimes \NOV$ to work with $\NOV$ coefficients. We take a similar approach for $\Lambda$ and $\Lambda_0$.

The gapped condition is defined as follows.
The monoid $G \subset \RR_{\geq 0} \times 2 \ZZ$ is assumed to satisfy the following conditions
\begin{enumerate}
\item The projection $\pi_1(G) \subset \RR_{\geq 0}$ is discrete.
\item $G \cap (\{0\} \times 2\ZZ) = \{(0,0)\}$
\item $G \cap (\{\lambda \} \times 2\ZZ)$ is a finite set for any $\lambda$.
\end{enumerate}

Consider a free graded $\NOVO$ module $C$, and let $\OL{C}$ be an $\kk$-vector space
such that $C = \OL{C} \otimes_{\kk} \NOVO$. Then $(C,m_{\geq 0})$ is said to be $G$-gapped if there exists
homomorphisms $m_{k,\beta}:(\overline{C}[1])^{\otimes k} \to \overline{C}[1]$ for $k=0,1,\cdots,$ $\beta=(\lambda(\beta),\mu(\beta)) \in G$
such that
$$m_k = \sum_{\beta \in G} T^{\lambda(\beta)}e^{\mu(\beta)/2} m_{k,\beta}.$$
One defines filtered gapped $\AI$-algebras as in the definition \ref{def:ai}, by considering
the same equation \ref{aiformula} for $k=0,1,\cdots$.

Recall that these $m_k$ operations may be considered as coderivations by defining
\begin{equation}\label{eq:hatdfil}
\WH{m}_k(x_1 \otimes \cdots \otimes x_n) = \sum_{i=1}^{n-k+1}
(-1)^{|x_1|' + \cdots + |x_{i-1}|'} x_1 \otimes \cdots \otimes m_k(x_i,
\cdots, x_{i+k-1}) \otimes \cdots \otimes x_n
\end{equation}
for $k \leq n$ and $\WH{m}_k(x_1 \otimes \cdots \otimes x_n) =0$ for $k >n$.
If we set $\WH{d} = \sum_{k=0}^\infty \WH{m}_k$, the $\AI$-equations are equivalent to the equality
$\WH{d} \circ \WH{d} =0$.

We recall cyclic $\AI$-algebras in the gapped filtered case.

\begin{definition}
A filtered gapped $\AI$-algebra $(C,\{m_*\})$ is said to have a {\it cyclic symmetric} inner product if
there exists a skew-symmetric non-degenerate, bilinear map $$<,> : \overline{C}[1] \otimes \overline{C}[1] \to \kk,$$
which is extended linearly over $C$,
such that for all integer $k \geq 0$, $\beta \in G$,
\begin{equation}\label{cyeqnfil}
    <m_{k,\beta}(x_1,\cdots,x_k),x_{k+1}> = (-1)^{K}<m_{k,\beta}(x_2,\cdots,x_{k+1}),x_{1}>.
\end{equation}
 where $K = |x_1|'(|x_2|' + \cdots +|x_{k+1}|')$.
 For short, we will call such an algebra, cyclic (filtered) $\AI$-algebra.
\end{definition}

\subsection{Weakly filtered $\AI$-bimodule homomorphisms}
The usual notions of filtered $\AI$-homomorphisms, and filtered bimodule maps require the maps to preserve filtrations.
But the map obtained via differential forms in the Lemma \ref{lemma:corr} do not always preserve the filtration, but
provides so called, weakly filtered $\AI$-bimodule homomorphisms in \cite{FOOO}.

First we recall the notion of filtered $\AI$-homomorphism between two filtered $\AI$-algebras.
The family of maps of degree 0
$$f_k : B_k(C_1) \to C_2[1] \;\;\textrm{for} \; k =0,1,\cdots $$
induce the coalgebra map
$\HH{f}:\HH{B}C_1 \to \HH{B}C_2$, which for $x_1 \otimes  \cdots \otimes x_k \in B_k C_1$ is defined
by the formula
$$\HH{f}(x_1\otimes \cdots \otimes x_k) = \sum_{0 \leq k_1 \leq \cdots \leq k_n \leq k}
f_{k_1}(x_1,\cdots,x_{k_1})\otimes \cdots \otimes f_{k-k_n }(x_{k_n+1},\cdots,x_{k}).$$
We remark that the above can be an infinite sum due to the possible existence of $f_0(1)$.
In particular, $\HH{f}(1) = e^{f_0(1)}$. It is assumed that
\begin{equation}\label{fenergy}
\begin{cases}
f_k(F^\lambda B_k(C_1)) \subset F^\lambda C_2[1], \;\;\textrm{and}\\
f_0(1) \in F^{\lambda'}C_2[1] \,\;\; \textrm{for some}\; \lambda' >0. 
\end{cases}
\end{equation}
The map $\HH{f}$ is called a filtered $\AI$-homomorphism if  
$$\HH{d} \circ \HH{f} = \HH{f} \circ \HH{d}.$$

We recall the definition of weakly filtered $\AI$-bimodule homomorphisms from \cite{FOOO} in a
simple case of $A$-bimodules for an $\AI$-algebra $A=(C,\{m\})$.
Let $\WT{M}$ and $\WT{M}'$ be  filtered $(A,A)$ $\AI$-bimodules over $\NOV$.
A {\it weakly filtered $\AI$-bimodule homomorphism} $\WT{M} \to \WT{M}'$
is a family of $\NOV$-module homomorphisms
$$\phi_{k_1,k_0}:B_{k_1}(C) \HH{\otimes} \WT{M} \HH{\otimes} B_{k_0} (C) \to \WT{M}'$$
with the following properties:

\begin{enumerate}
\item There exists $c\geq 0$ independent of $k_0,k_1$ such that
$$\phi_{k_1,k_0} \big( F^{\lambda_1}B_{k_1}(C) \HH{\otimes} F^{\lambda}\WT{M} \HH{\otimes} F^{\lambda_0}B_{k_0} (C) \big) \subset F^{\lambda_1 +\lambda + \lambda_0 - c}\WT{M}'$$
\item  $\HH{\phi} \circ \HH{d} = \HH{d}' \circ \HH{\phi}$
\end{enumerate}

Weakly filtered homomorphisms arise when we study the
invariance property of the Floer cohomology $HF(L_0,L_1) \cong HF(L_0,\phi(L_1))$ where the constant $c$ is
related to the Hofer norm of the Hamiltonian isotopy $\phi$.

\subsection{Formal manifolds}
The bar complex $\WH{B}C$ in the filtered case is obtained by taking a completion with respect to energy.
Hence, the Hochschild complex $C_\bullet(A,A)$ is similarly defined but also has to be completed.
To consider dualization of the bar complex $\WH{B}C$, we consider only Novikov fields $\Lambda$, and also assume that $C$ is a finite dimensional vector space. And then, we can take topological dual spaces as in \cite{C}:
Let $V$ be a vector space over the field $\Lambda$ with finitely many generators $\{e_i\}_{i=1}^n$.
Consider $V$ as a topological vector space
by defining a fundamental system of neighborhoods of $V$ at $0$:
first define the filtrations $F^{>\lambda}V$ as
$$F^{>\lambda}V = \{ \sum_{j=1}^k a_j v_{i_j}| a_i \in \Lambda,\tau(a_i) > \lambda, \; \forall i\}.$$
Here $\tau$ is the valuation of $\Lambda$ which gives the minimal exponent of $T$.
We regard $F^{>\lambda}V$ for $\lambda=0,1,2,\cdots$ as fundamental system of neighborhoods at $0$.
The completion with respect to energy can be also considered as a completion using the Cauchy sequences in $V$ with the above topology.

Now, consider the following topological dual space
$$\CO(X) = Hom_{cont} (\WH{B}C, \Lambda).$$
Consider the dual basis $\{x_i\}_{i=1}^n$  considered as elements in
$\HH{V}^* = Hom_{cont}(\HH{V}, \Lambda).$
Then, the Lemma 9.1 of \cite{C1} may be translated as
\begin{lemma} We have
\begin{equation}\label{co}
\CO(X) = \Lambda <<x_1,\cdots,x_n>>,
\end{equation}
where the right hand side is the set of all formal power series of variables $x_1,\cdots,x_n$
whose coefficients in $\Lambda$ are bounded below.
\end{lemma}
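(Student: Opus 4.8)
The plan is to exhibit an explicit bijection between continuous functionals and formal power series through the dual basis, and then to show that the continuity of a functional translates exactly into the bounded-below condition on the coefficients of the associated series. For a multi-index $I=(i_1,\dots,i_k)$ I write $e_I:=e_{i_1}\otimes\cdots\otimes e_{i_k}\in\WH{B}C$ and $x_I:=x_{i_1}\cdots x_{i_k}$. To $\phi\in\CO(X)=Hom_{cont}(\WH{B}C,\Lambda)$ I would associate the series $P_\phi:=\sum_I\phi(e_I)\,x_I$, and to a series $P=\sum_I c_I x_I$ the functional $\phi_P$ prescribed on monomials by $\phi_P(e_J)=c_J$ and extended by continuity. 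Because the $\Lambda$-span of the monomials $e_I$ is dense in $\WH{B}C$, a continuous functional is determined by the scalars $\phi(e_I)$; hence $\phi\mapsto P_\phi$ is well defined and injective, and the task reduces to identifying its image and checking that the two assignments are mutually inverse.

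First I would argue that continuity forces $\{\tau(\phi(e_I))\}_I$ to be bounded below, $\tau$ denoting the valuation of $\Lambda$. Continuity at $0$ makes $\phi^{-1}(F^{>0}\Lambda)$ an open neighborhood of $0$, so it contains $F^{>\mu}\WH{B}C$ for some integer $\mu\geq 0$. For each real $s>\mu$ the element $T^s e_I$ lies in $F^{>\mu}\WH{B}C$, whence $\tau(\phi(T^s e_I))=s+\tau(\phi(e_I))>0$ and $\tau(\phi(e_I))>-s$; letting $s\downarrow\mu$ gives $\tau(\phi(e_I))\geq-\mu$ uniformly in $I$, so that $P_\phi\in\Lambda<<x_1,\dots,x_n>>$.

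For the opposite inclusion I would check that a series $P=\sum_I c_I x_I$ with $\tau(c_I)\geq -c$ for all $I$ produces a continuous functional. The structural input is the description of the completed bar complex: $\xi=\sum_I a_I e_I$ lies in $\WH{B}C$ exactly when $\{I:\tau(a_I)\leq\lambda\}$ is finite for every $\lambda$, equivalently when $\inf_{|I|=k}\tau(a_I)\to+\infty$ as $k\to\infty$. For such $\xi$ one has $\tau(c_I a_I)\geq\tau(a_I)-c$, so for every $N$ only finitely many terms of $\sum_I c_I a_I$ have valuation $\leq N$; the series therefore converges in $\Lambda$ and defines $\phi_P(\xi)$. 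Continuity follows at once, since $\xi\in F^{>\lambda+c}\WH{B}C$ forces every $\tau(a_I)>\lambda+c$ and hence $\tau(\phi_P(\xi))>\lambda$. Finally $\phi_{P_\phi}$ agrees with $\phi$ on all monomials $e_J$, hence on all of $\WH{B}C$ by density and continuity, while $P_{\phi_P}=\sum_I\phi_P(e_I)\,x_I=P$; thus the two constructions are inverse and the identification is established.

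The step I expect to be the main obstacle is precisely this matching, which hinges on correctly identifying $\WH{B}C$ with those formal sums whose coefficient valuations grow with tensor length, and on the \emph{uniformity} in $I$ of the bounds $-\mu$ and $-c$ above. It is this uniformity that singles out ``bounded below'' as the right condition: a control on $\tau(\phi(e_I))$ that were merely termwise and not uniform would let one build an element of $\WH{B}C$ on which the formal pairing $\sum_I\phi(e_I)\,a_I$ leaves $\Lambda$, so that neither a weaker nor a stronger hypothesis than bounded-below would reproduce $\CO(X)$.
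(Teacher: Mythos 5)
Your argument is correct, but note that the paper does not actually prove this lemma: it is quoted as a translation of Lemma 9.1 of \cite{C1}, and the only justification offered in the text is the heuristic remark that ``the dual elements are allowed to have infinite sums with bounded energy since the inputs for the evaluation already has energy converging to infinity.'' Your proof is a complete, self-contained substantiation of exactly that heuristic: the two key points are (i) continuity at $0$ applied to the single neighborhood $F^{>0}\Lambda$, together with $\Lambda$-linearity on the elements $T^s e_I$, which yields the \emph{uniform} lower bound $\tau(\phi(e_I))\geq -\mu$; and (ii) the characterization of the energy completion $\WH{B}C$ as formal sums $\sum_I a_I e_I$ with only finitely many terms of valuation $\leq\lambda$ for each $\lambda$ (equivalent to $\inf_{|I|=k}\tau(a_I)\to\infty$ because there are only $n^k$ multi-indices of each length), which makes $\sum_I c_I a_I$ summable in the complete valued field $\Lambda$ whenever the $\tau(c_I)$ are uniformly bounded below. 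Both steps are sound, the two assignments are visibly mutually inverse, and your closing observation about why uniformity is the exact dividing line (a sequence $\tau(c_{I_k})\to-\infty$ lets one take $a_{I_k}=T^{-\tau(c_{I_k})}$ to build an element of $\WH{B}C$ on which the pairing has infinitely many terms of valuation $0$ and so fails to converge) is the right way to see that ``bounded below'' is not an artifact of the argument. Two small points of care, neither of which damages the proof: you implicitly endow $\WH{B}C$ with the total-energy filtration in which the monomials $e_I$ have energy $0$, which is what the paper intends but does not spell out; and whether the empty multi-index (constant term) is included depends on whether one uses $BC$ or the augmented $BC^+$, an ambiguity already present in the paper itself.
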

In particular, $\CO(X)$ does not contain formal power series whose energy of the coefficients converging to
$-\infty$. Intuitively, the dual elements are allowed to have
infinite sums with bounded energy since the inputs for the evaluation already
has energy converging to infinity in its infinite sum.

One can also possibly use (\ref{co}) as a definition with several
different choices of coefficient rings $\NOV,\NOVO,\Lambda,\Lambda_0$. From now on,
we will work with $\Lambda$ but other coefficients can be used for the rest of the paper also with
little modification.

\subsection{Darboux theorem}
First, we define de Rham complex $\Omega_{cyc}(X)$, vector field $Q$ as before.
Note that the coefficients of the vector field $Q$ always have non-negative energy from the
definition of $\AI$-structure. Also note that $Q$ may have a component of constant vector field
which corresponds to the term $m_0$.

We show that the Darboux theorem in general does not hold in the filtered case, and one should restrict to
symplectic forms with non-negative energy.
Let $\omega \in \Omega_{cyc}^2(X)$ be a closed non-degenerate two form in the filtered setting as above.
Suppose the symplectic form can be written as $\omega = \omega_{ij}dx^idx^j + \omega'$
for $\omega' \in \Omega_{cyc}^2(X)$ such that each term of $\omega'$ has either positive energy ($T^\lambda$ for $\lambda>0$) or
positive length (with possibly negative energy).

\begin{theorem}[Darboux theorem]\label{darboux}
Consider the symplectic form $\omega = \omega_{ij}dx^idx^j + \omega'$ as above. If 
$\omega'$ does not contain a term with negative energy, then there exist filtered $\AI$-isomorphism $f$ which solves Darboux theorem. 
$$i.e. \;\;f^*\omega =  \omega_{ij}dx^idx^j.$$
But if $\omega'$ contains a term with negative energy with
positive length, then there does {\em not} exist any filtered $\AI$-isomorphism $f$ solving the Darboux theorem. 
\end{theorem}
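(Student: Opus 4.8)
The plan is to prove the two assertions separately, keeping careful track of the energy valuation $\tau$ throughout, since the filtered setting departs from the classical one precisely in the behaviour of energy. Begin with the existence half. First observe that $\omega$ is non-degenerate with leading bilinear form exactly $(\omega_{ij})$: by hypothesis every term of $\omega'$ carries positive energy or positive length, so its constant-coefficient length-two component vanishes, and the energy-zero, length-two part of $\omega = \omega_{ij}dx^idx^j + \omega'$ is the invertible matrix $(\omega_{ij})$, which is invertible over $\Lambda_0$. Since $\omega$ and $\omega_{ij}dx^idx^j$ are both $d_{cyc}$-closed, so is $\omega'$, and the Poincar\'e lemma furnishes a cyclic one-form $\beta = H(\omega')$ with $d_{cyc}\beta = \omega'$. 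The contracting homotopy $H$ only replaces a $dx_i$ by $x_i$ and hence preserves the energy filtration, so when $\omega'$ has non-negative energy the same is true of $\beta$.

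For the construction of $f$ I would proceed order by order in the combined (energy, length) filtration, rather than integrating a single Moser flow. At each stage, let $\omega'_0$ be the lowest-order remaining piece of the error; it is closed, so $\omega'_0 = d_{cyc}\beta_0$ with $\beta_0$ of the same non-negative energy, and I solve $i_v(\omega_{ij}dx^idx^j) = -\beta_0$ for a formal vector field $v$. Inverting the constant form $(\omega_{ij})$ shows $v$ has non-negative energy and no constant part. Applying the automorphism $e^{\CL_v}$ produced in Lemma \ref{lem:homfromv}, together with the transformation rule $\beta \mapsto e^{\CL_v}\beta$ for forms, cancels $\omega'_0$ and, exactly as in the telescoping computation in the proof of Proposition \ref{prop:welldefined}, leaves an error strictly higher in the filtration. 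Iterating and taking the infinite composition gives the desired $f$ with $f^*\omega = \omega_{ij}dx^idx^j$. The one genuinely new point is convergence: each $\CL_v$ strictly raises either the length or the energy, so by the gapped condition (discreteness of $\pi_1(G)$ and finiteness of each energy level) the composition converges and defines a \emph{filtered} $\AI$-isomorphism. Non-negativity of the energy of $\beta$, and hence of $v$, is exactly what keeps the whole recursion inside the filtered category.

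For the non-existence half I would argue by a valuation obstruction. Suppose a filtered $\AI$-isomorphism $f$ satisfied $f^*\omega = \omega_{ij}dx^idx^j$; then $\omega = (f^{-1})^*(\omega_{ij}dx^idx^j)$, with $f^{-1}$ again filtered. By the change-of-coordinate formula (\ref{changeco}) together with the filtration condition (\ref{fenergy}), the substitution associated to $f^{-1}$ sends each $x_i$ to a power series whose coefficients have non-negative energy, and $d$ as well as the multiplication of forms are additive on energy. Since the coefficients $\omega_{ij}$ lie in $\kk$ and hence have energy zero, every monomial of $(f^{-1})^*(\omega_{ij}dx^idx^j)$ has $\tau \geq 0$. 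This contradicts the assumption that $\omega$ contains a term of strictly negative energy; the positivity of its length guarantees it cannot be absorbed into the length-two constant part. Hence no such $f$ exists.

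The main obstacle is the existence half, specifically the filtered convergence of the order-by-order construction while simultaneously guaranteeing that each $v$ stays of non-negative energy; this is where the hypothesis on $\omega'$ is used decisively and where the argument diverges from the unfiltered proofs of \cite{G} and \cite{Kaj}. By contrast, the non-existence half is a short valuation computation, resting only on the observation that a filtered isomorphism cannot lower the energy of a form whose coefficients already begin in $\kk$.
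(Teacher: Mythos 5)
Your proposal is correct and follows essentially the same route as the paper: existence is proved by induction over the combined energy-plus-length order, at each stage using $d_{cycl}$-closedness, the Poincar\'e lemma, and non-degeneracy of the constant part $(\omega_{ij})$ to cancel the lowest-order error (the paper writes the correction directly as $x^i \mapsto x^i + f^i$ rather than as $e^{\CL_v}$ with $i_v(\omega_{ij}dx^idx^j)=-\beta_0$, but these agree to leading order and both converge by gappedness), and non-existence is the same energy-valuation obstruction (the paper tracks the minimal negative exponent under $f$ where you pull back the constant form by $f^{-1}$). The one small slip is your citation of Lemma \ref{lem:homfromv}, which assumes $[Q,v]=0$ --- a condition your $v$ need not satisfy --- but since the Darboux coordinate change only needs to be an algebra automorphism (the $\AI$-structure is transported along with it, not preserved), $e^{\CL_v}$ is still well defined and the argument stands.
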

\begin{proof}
For the first claim, we follow the proof of unfiltered case in the theorem 4.15 of \cite{Kaj}. 
In the gapped filtered case, the induction
should be run over the sum of two indices. As $\pi_1(G)$ is
discrete, we can find an increasing sequence $\lambda_j$ with $\lim \lambda_j  = \infty$ which covers
the image of $\pi_1(G) \subset \RR_{\geq 0}$.
We run the induction over the sum $k+j= N$, where $k$ is the power of $x^i$'s and 
 $j$ is for the energy level $\lambda_j$. 
 
Now, assume that $\omega$ satisfies the assumption, and $\omega$ is
transformed to the constant up to level $N$. Then, we consider the transformation of the form
$$ x^i \mapsto x^i + f^i,\;\;\; f^i = \sum_{j+k=N} T^{\lambda_j} x^{i_1}\cdots x^{i_k}.$$
By this transformation, $\omega$ is transformed as
$$\big( \omega_{ij}dx^idx^j + \omega_N + \cdots \big) \longmapsto
\big(\omega_{ij}dx^idx^j + \omega_N  + \omega_{ij}2d_{cycl}((f^i)dx^j)_c +\cdots \big).$$
But as $\omega_{ij}dx^idx^j + \omega_N + \cdots$ is $d_{cycl}$-closed, hence 
$\omega_N$ is $d_{cycl}$-closed and hence $d_{cycl}$-exact. So, by appropriate choice of $f^{i}$,
$\omega_N$ can be cancelled out as $\omega_{ij}$ is non-degenerate. Thus $\omega$ is transformed to be
constant up to $(N+1)$-level. Repeating this process completes the proof.

For the second statement, it is enough to show that a filtered isomorphism 
preserve the minimal negative exponent of the given symplectic form.
Note that as $f$ is an isomorphism, $f_1$ is an isomorphism. 
Then it is not hard to see as in the above that from the contribution of $f_1$, 
the change of coordinate by filtered $\AI$-map $f$ preserve the minimal negative exponent of the given symplectic form.
\end{proof}
We remark that there does not exist a notion of weakly filtered $\AI$-homomorphism.
Namely, a component $f_k$ of the filtered $\AI$-map $f$ cannot decrease the energy. 
If $f_k$ does decrease the energy, $\WH{f_k}$ for the bar complex would provide sequence of terms with the energy converging to $-\infty$,
but such elements do not exist in the bar complex $\WH{B}C$.
\subsection{Correspondences}
First, the definition of Hochschild (co)homology of filtered $\AI$-algebra can be given
in a similar way. But to consider its homological algebra, one has to be careful to deal with
$m_0$ terms, which we refer readers to \cite{C}. (For example, the standard contracting homotopy for the bar complex has to be modified.)
One define also the Hochschild cochain complex
$(C^\bullet(A,A^*),b^*)$ by taking the topological dual of $(C_\bullet(A,A),b)$.

As in the lemma \ref{lem:compare}, we have
\begin{lemma}\label{lem:compare2}
For a unital finite dimensional filtered gapped $\AI$-algebra $A$,
the complex $(\Omega^1_{cycl}(X)[1], \CL_Q)$ can be identified with Hochschild cochain complex $(C^\bullet(A,A^*),b^*)$,
and $(\Omega^0_{cycl}(X)/\Lambda, \CL_Q)$ can be identified with cyclic cochain complex $((C^\lambda(A))^*,b^*)$.
\end{lemma}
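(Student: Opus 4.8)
The plan is to run the same coefficient-matching argument as in Lemma \ref{lem:compare}, and then to verify that it survives the passage to completed (topological) duals and accommodates the new constant term $m_0$ of $Q$. First I would set up the correspondence exactly as before: to a Hochschild cochain $\eta \in C^\bullet(A,A^*)$, given on basis elements by $\eta(e_{i_1},\dots,e_{i_n})(e_j) = \eta^j_{i_1,\dots,i_n} \in \Lambda$, I assign the cyclic $1$-form $\alpha_\eta = \sum \eta^j_{i_1,\dots,i_n}(x^{i_1}\cdots x^{i_n}dx^j)_c$. The purely algebraic identity $\alpha_{b^*\eta} = \CL_Q \alpha_\eta$ is then checked monomial-by-monomial exactly as in the proof of Lemma \ref{lem:compare}; nothing in that computation uses the ground field, so it carries over verbatim (including the degree shift $[1]$) once convergence is guaranteed.

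The first genuinely new point is topological. In the filtered setting $C^\bullet(A,A^*)$ is the topological dual of the completed Hochschild complex, so its elements are continuous functionals, and by the identification (\ref{co}) (Lemma 9.1 of \cite{C1}) these correspond precisely to formal power series whose $\Lambda$-coefficients are bounded below in energy. I would check that $\eta \mapsto \alpha_\eta$ is a bijection between continuous Hochschild cochains and the cyclic $1$-forms in $\CO(T[1]X)$ with bounded-below coefficients: the boundedness of $\eta$ as a functional on the completed bar complex is exactly the boundedness-below condition on the coefficients of $\alpha_\eta$, since the inputs of the evaluation already carry energy tending to $+\infty$.

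The second new point, and what I expect to be the main obstacle, is the constant term of $Q$ coming from $m_0$. Now $\WH{d} = \sum_{k\geq 0}\WH{m}_k$ includes $m_0$-insertions, so $b^*$ acquires extra terms and $Q$ acquires a constant-coefficient component. I would verify that $\CL_Q$ remains a well-defined operator on power series with bounded-below coefficients, using that all coefficients of $Q$ have non-negative energy so that $\CL_Q$ cannot drive energies to $-\infty$, and that the $m_0$-contributions to $b^*\eta$ match, term by term, the action of the constant part of $Q$ through $\CL_Q = d\circ i_Q + i_Q\circ d$. The bookkeeping is more delicate than in the unfiltered case because each application of $\CL_Q$ can now produce infinitely many monomials, so one must track that the resulting sums genuinely lie in $\CO(T[1]X)$ rather than being merely formal.

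Finally, for the cyclic statement I would repeat the argument on Connes' complex $C^\lambda(A)$, with the quotient taken by the full Novikov field $\Lambda$ rather than by $\kk$; this replacement is forced by the fact that the degree-zero cyclic functions $\Omega^0_{cycl}(X)/\Lambda$ are now computed over $\Lambda$-coefficients, and the constant forms to be quotiented out constitute all of $\Lambda$. Once the convergence and boundedness bookkeeping above is in place, the cyclic identification follows from the same coefficient comparison as the Hochschild one.
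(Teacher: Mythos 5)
Your proposal is correct and follows essentially the same route the paper intends: the paper gives no separate proof of Lemma \ref{lem:compare2}, asserting it holds ``as in'' Lemma \ref{lem:compare} after passing to topological duals, and your coefficient-matching argument together with the identification of continuous functionals with bounded-below power series (via (\ref{co})) and the non-negativity of the energy of $Q$ (including its constant $m_0$-part) is precisely the verification being left implicit. The only point worth adding is that the convergence of the sums produced by $\CL_Q$ is guaranteed by the gappedness hypotheses on $G$ (discreteness of $\pi_1(G)$ and finiteness of each energy level), which is the structural reason your bookkeeping closes up.
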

Also the Prop. \ref{prop:first} holds true in the gapped filtered case. The lemma \ref{lemma:corr}
has to be modified as follows. First, we denote by
$$ \Omega_{cyc,+}^2(X) \subset  \Omega_{cyc}^2(X),$$
the subset consisting of formal sums each term of which has non-negative energy coefficient.
\begin{lemma}\label{lemma:corr2}
We have the following 1-1 correspondences.
\begin{enumerate}
\item A filtered (resp. weakly filtered) skew-symmetric $\AI$-bimodule map $\psi:A \to A^*$ corresponds to
a two form $ \omega_\psi \in \Omega_{cyc,+}^2(X)$ (resp. $\in \Omega_{cyc}^2(X)$) with $L_Q \omega_\psi =0$.
\item The relation between $\widetilde{\eta}$  and $d \alpha_\eta$ is as before.
\item The strong homotopy inner product $\phi:A \to A^*$ corresponds to the homologically non-degenerate $\omega_\phi \in \Omega_{cyc,+}^2(X)$ with
$d \omega_\phi=0= \CL_Q \omega_\phi$.
\end{enumerate}
\end{lemma}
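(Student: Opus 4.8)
The plan is to reduce everything to the unfiltered Lemma~\ref{lemma:corr}, whose verification proceeds entirely by term-by-term comparison of coefficients of cyclic monomials, and then to superimpose the energy bookkeeping that is special to the gapped filtered setting. Indeed, the three algebraic identities underlying the correspondence --- that $\CL_Q$-closedness of $\omega_\psi$ is equivalent to $\psi$ being an $\AI$-bimodule map, that skew-symmetry of $\psi$ is exactly what makes $\omega_\psi$ a well-defined cyclic form (since $dx^j$ and $dx^n$ cannot be ordered in $\Omega_{cyc}$), and that $\omega_{\widetilde\eta}=d\alpha_\eta$ --- are formal consequences of the definitions and of the fact that $\CL_Q=[d,i_Q]$ is a derivation. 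These survive verbatim in the filtered case, the only changes being that $Q$ now carries a constant component coming from $m_0$ and that the relevant sums range over $k,l=0,1,2,\dots$ and over $\beta\in G$. In particular part (2) is immediate from the unfiltered argument, and the $\CL_Q$-closed\,/\,bimodule and skew\,/\,well-defined equivalences hold whenever the expressions involved make sense as elements of $\CO(T[1]X)$.

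The genuinely new point is therefore to match the filtration behaviour of $\psi$ with the energy class of $\omega_\psi$. Writing $\psi_{k,l}(e_{i_1},\dots,e_{i_k},\UL{e_j},e_{j_1},\dots,e_{j_l})(e_n)$ for the coefficients, I would evaluate the filtration condition on the basis elements $e_*$, which lie in $\OL C$ and hence have energy $0$. For a strict filtered bimodule map (the constant $c=0$), the filtration condition then forces every coefficient to have valuation $\tau\geq 0$, so $\omega_\psi\in\Omega_{cyc,+}^2(X)$; this proves (1) in the filtered case. For a weakly filtered map with uniform constant $c\geq 0$, the same evaluation only gives $\tau\geq -c$, so the coefficients are bounded below by $-c$ independently of $k,l$, which is precisely the condition needed for the infinite sum $\omega_\psi$ to define an element of $\CO(T[1]X)=\Lambda\ll x_i,dx_i\gg$ (coefficients bounded below in energy), i.e. $\omega_\psi\in\Omega_{cyc}^2(X)$. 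Conversely, a two form with non-negative (resp. bounded-below) energy coefficients reconstructs a collection $\psi_{k,l}$, and reading the same inequality backwards shows it is a filtered (resp. weakly filtered) bimodule map; together with the $\CL_Q$-closedness already arranged, this completes the bijection for (1).

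For part (3) I would assemble the pieces using Definition~\ref{def:cyc2}: a strong homotopy inner product is a (strict filtered) $\AI$-bimodule map $\phi$ that is skew-symmetric, closed and homologically non-degenerate. The bimodule property gives $\CL_Q\omega_\phi=0$; the closedness condition translates, exactly as in the proof of Lemma~\ref{lemma:corr}, into $d_{cyc}\omega_\phi=0$; strictness of the filtration ($c=0$) places $\omega_\phi$ in $\Omega_{cyc,+}^2(X)$ by the previous paragraph; and homological non-degeneracy of $\phi_{0,0}$ corresponds to $\omega_\phi$ being homologically non-degenerate. Thus $\phi$ corresponds to a homologically non-degenerate $\omega_\phi\in\Omega_{cyc,+}^2(X)$ with $d\omega_\phi=0=\CL_Q\omega_\phi$, as claimed.

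I expect the main obstacle to be convergence in the completed, topologically dualized setting rather than the algebra. Because $\omega_\psi$ is an infinite sum over both the arity $(k,l)$ and the energy parameter $\beta\in G$, one must check that it is a legitimate element of $\CO(T[1]X)$, and this is exactly where the uniformity of the constant $c$ (independent of $k_0,k_1$ in the definition of a weakly filtered homomorphism) is indispensable: without it the coefficients could have energies tending to $-\infty$. A secondary subtlety is the constant part of $Q$ coming from $m_0$, which must be carried through $\CL_Q=d\circ i_Q+i_Q\circ d$ consistently; since the term-by-term identities were already written for $k=0,1,2,\dots$, this enters only as bookkeeping, but it is the point at which one should double-check that no spurious energy drop is introduced.
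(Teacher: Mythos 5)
Your proposal is correct and follows essentially the route the paper intends: the paper gives no separate proof of Lemma~\ref{lemma:corr2}, presenting it only as the modification of Lemma~\ref{lemma:corr} obtained by tracking energy, and your reduction to the unfiltered term-by-term verification plus the observation that the uniform constant $c$ in the weakly filtered condition is exactly what keeps the coefficients of $\omega_\psi$ bounded below (hence a legitimate element of $\CO(T[1]X)$, landing in $\Omega^2_{cyc,+}(X)$ precisely when $c=0$) is the intended argument. Your closing remarks on convergence of the infinite sum over arity and $\beta\in G$ and on the constant component of $Q$ coming from $m_0$ correctly identify the only points where the filtered case differs from the proof of Lemma~\ref{lemma:corr}.
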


We remark that weakly filtered $\AI$-bimodule maps can be used to prove the following lemma, which is proved in \cite{C}.
\begin{lemma}
The homologically non-degenerate weakly filtered $\AI$-bimodule map $\phi:A \to A^*$
provides an isomorphism of Hochschild homology $H_\bullet(A,A)$ with $H_\bullet(A,A^*)$.
\end{lemma}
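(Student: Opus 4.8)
The plan is to show that the bimodule map $\phi$ induces a quasi-isomorphism on the Hochschild chain complexes, which then yields the claimed isomorphism on Hochschild homology. Recall that an $\AI$-bimodule homomorphism $\phi=\{\phi_{k,l}\}$ from $A$ to $A^*$ induces a chain map
$$\phi_*:(C_\bullet(A,A),b)\to(C_\bullet(A,A^*),b),$$
where $\phi_*(\UL{v}\otimes x_1\otimes\cdots\otimes x_n)$ is the sum over all ways of feeding some left-most and right-most tensor factors into $\phi_{k,l}$ to produce the new (dual) module element, leaving the remaining factors as the bar part. The compatibility $\WH{\phi}\circ\WH{d}=\WH{d}\circ\WH{\phi}$ of a bimodule map is exactly the statement that $\phi_*$ commutes with $b$, so the content of the lemma is that $\phi_*$ is a quasi-isomorphism.

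First I would reduce the statement to its linear part $\phi_{0,0}:A\to A^*$. Both Hochschild complexes carry the filtration by tensor length, and in the unfiltered situation (or, in the filtered situation, after passing to the energy-zero reduction in which $m_0$ vanishes) the Hochschild differential either preserves length (via $m_1$) or decreases it (via $m_{\geq 2}$), while $\phi_*$ either preserves length (via $\phi_{0,0}$) or decreases it (via $\phi_{k,l}$ with $(k,l)\ne(0,0)$). Hence $\phi_*$ is a morphism of the associated length-filtered spectral sequences. On the $E_1$-page each complex splits as a tensor product of the module homology with the bar homology of $(A,m_1)$, and the induced map is $(\phi_{0,0})_*\otimes\mathrm{id}$. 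Thus it suffices to prove that $(\phi_{0,0})_*:H^\bullet(A)\to H^\bullet(A^*)$ is an isomorphism.

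This is where homological non-degeneracy enters. The $(0,0)$-part of the bimodule equation says precisely that $\phi_{0,0}$ is a chain map from $(A,m_1)$ to $(A^*,m_1^*)$, so it descends to $(\phi_{0,0})_*:H^\bullet(A)\to H^\bullet(A^*)\cong (H^\bullet(A))^*$, the last isomorphism being dualization over the ground field, valid since $A$ is compact, i.e. $H^\bullet(A)$ is finite dimensional. Homological non-degeneracy says that for every $0\ne[a]$ there is $[b]$ with $\phi_{0,0}(a)(b)\ne0$, that is, $(\phi_{0,0})_*$ is injective; finite-dimensionality together with $\dim H^\bullet(A)=\dim (H^\bullet(A))^*$ then forces it to be an isomorphism. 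Combining with the previous paragraph, $\phi_*$ is an isomorphism on $E_1$, and the comparison theorem for convergent spectral sequences gives that $\phi_*$ is a quasi-isomorphism, proving the lemma.

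The main obstacle is the filtered bookkeeping. Since $\phi$ is only weakly filtered, it need not respect the energy filtration, so one cannot run the argument naively over $\Lambda_0$; the point is to work over the Novikov field $\Lambda$, where the length filtration is energy-independent and $\phi_*$ is simply a continuous $\Lambda$-linear chain map. Two further issues must be handled. First, with $m_0\ne 0$ the operation $m_0$ contributes a length-increasing term to $b$, so one must first filter by energy—using the gapped assumption that $m_0$ has strictly positive energy—to pass to the energy-zero reduced $\AI$-algebra over $\kk$, where $m_0$ drops out and the length filtration behaves as above, and only then lift the resulting isomorphism back over $\Lambda$ by completeness. Second, the length spectral sequence is unbounded, so its convergence, and hence the applicability of the comparison theorem, relies on the energy completion of the bar and Hochschild complexes; verifying this convergence in the completed setting is the technical heart of the argument.
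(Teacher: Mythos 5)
The paper does not actually prove this lemma---it explicitly defers the proof to \cite{C}---but your argument is the standard one (and essentially the one carried out there): reduce via the length-filtration spectral sequence on the two Hochschild complexes to the statement that $(\phi_{0,0})_*:H^\bullet(A)\to H^\bullet(A^*)\cong (H^\bullet(A))^*$ is an isomorphism, which follows from homological non-degeneracy (injectivity) together with finite-dimensionality (surjectivity by dimension count). Your outline is correct; the only part left schematic is the convergence of the length spectral sequence in the energy-completed, $m_0\neq 0$ setting, which you correctly isolate as the remaining technical point and for which the gappedness of $G$ (discreteness and positivity of the energies appearing in $m_0$) is exactly what makes the double-filtration induction work.
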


\subsection{The main theorem in the filtered case}
Now, we prove the main theorem for gapped filtered $\AI$-algebras.
First, we call a filtered $\AI$-algebra $(C,m)$ {\em compact} if the homology $H^\bullet(C,m_{1,0})$ is finite dimensional, and is called {\em canonical} if $m_{1,0} \equiv 0$. In \cite{FOOO}, the canonical model theorem (similar to minimal model theorem) is proved. The decomposition theorem in the filtered case is proved in \cite{CL}.

\begin{theorem}\label{thm2}
  For a weakly unital compact gapped filtered $\AI$-algebra $A$, a homologically nondegenerate negative cyclic cohomology class $[\phi]$, each term of which has non-negative energy, gives rise to an isomorphism class of strong homotopy inner products on $A$. In particular, from $[\phi]$, we construct a  strong homotopy inner product $ \WT{\phi_0}:A \to A^*$ explicitly using the Proposition \ref{prop:first}.
 
 In particular, we have a quasi-isomorphic cyclic gapped filtered $\AI$-algebra $B$ with $\psi:B \to B^*$ satisfying the commuting diagram \begin{equation}\label{defdiagram9}
\xymatrix{A \ar[d]_{\WT{\phi_0}} & B \ar[l]_{\WT{\iota}} \ar[d]^{cyc}_{\psi} \\ A^* \ar[r]^{\WT{\iota}^*} & B^* }
\end{equation}
\end{theorem}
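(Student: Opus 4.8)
The plan is to transport the entire unfiltered argument behind Theorem~\ref{thm} into the gapped filtered setting, invoking the filtered analogues established earlier in this section and keeping careful track of the energy filtration at every step. First I would apply Proposition~\ref{prop:first}, which holds verbatim in the gapped filtered case, to the given negative cyclic cocycle $\phi$ (with $b^*\phi_i = B^*\phi_{i+1}$). This produces $\WT{\phi_0}$, an $\AI$-bimodule map $A \to A^*$ satisfying skew-symmetry and closedness. Since $[\phi]$ is homologically nondegenerate, $\WT{\phi_0}_{0,0}$ is nondegenerate on $H^\bullet(C,m_{1,0})$, so $\WT{\phi_0}$ is a strong homotopy inner product. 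The hypothesis that each term of $[\phi]$ has non-negative energy, together with Lemma~\ref{lemma:corr2}, guarantees that the corresponding cyclic two-form $\omega_{\WT{\phi_0}}$ lies in $\Omega^2_{cyc,+}(X)$ rather than merely in $\Omega^2_{cyc}(X)$; this is exactly the input the filtered Darboux theorem will require.

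Next I would produce the cyclic algebra $B$ and the diagram~(\ref{defdiagram9}). Using compactness I pull all the data back to the canonical model $H^\bullet(C,m_{1,0})$, which is finite dimensional; the canonical model theorem of \cite{FOOO} and the filtered decomposition theorem of \cite{CL} supply a filtered quasi-isomorphism $i:H \to A$ along which $\WT{\phi_0}$ pulls back as in~(\ref{defdiagram5}). On the finite-dimensional canonical model I then run the filtered version of Theorem~\ref{prop:shi}: passing to the formal manifold picture, the closed non-degenerate two-form $\omega_{\WT{\phi_0}} \in \Omega^2_{cyc,+}(X)$ is straightened to a constant-coefficient symplectic form by the filtered Darboux theorem~\ref{darboux}, and the straightening coordinate change is the quasi-isomorphism $\iota:B \to A$ carrying the resulting cyclic inner product $\psi$ back to $\WT{\phi_0}$. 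Because $\omega_{\WT{\phi_0}}$ has non-negative energy, Theorem~\ref{darboux} applies and $\iota$ is a genuine \emph{filtered} $\AI$-isomorphism; this is precisely the point where the non-negative energy hypothesis in the statement is unavoidable.

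Finally I would prove independence of the representative of $[\phi]$, i.e.\ adapt Proposition~\ref{prop:welldefined}. Two cohomologous non-negative-energy cocycles $\phi,\phi'$ satisfy $\phi'_0 = \phi_0 + b^*\eta + B^*\gamma$ (in the weakly unital case via Tsygan's bicomplex and Remark~\ref{rmkN}); by Lemma~\ref{Bvanish} the term $\WT{B^*\gamma}$ vanishes, so $\WT{\phi'} = \WT{\phi} + \WT{b^*\eta}$, which on the formal manifold is the change $\omega \mapsto \omega + d\CL_Q\eta$. I then build the transforming $\AI$-automorphism recursively exactly as in Proposition~\ref{prop:welldefined}: solve $i_v\omega = -\CL_Q\eta$ using non-degeneracy of $\omega$, exponentiate $v$ to an $\AI$-automorphism by Lemmas~\ref{lem:homfromv} and~\ref{homfromv2}, and iterate, each pass cancelling the lowest remaining correction term, so that $\WT{\phi}$ and $\WT{\phi'}$ become equivalent in the sense of Definition~\ref{equivst}.

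The main obstacle is convergence together with the preservation of non-negative energy throughout this recursion. Unlike the unfiltered case, a filtered $\AI$-isomorphism cannot decrease energy, so at each stage I must verify that the solution $v$ of $i_v\omega = -\CL_Q\eta$ has non-negative energy coefficients; this follows because $Q$, $\omega$ and $\eta$ all carry non-negative energy, but it must be checked rather than assumed. Moreover the induction can no longer be organized by word length alone: as in the proof of Theorem~\ref{darboux}, it must run over the combined index $k+j=N$, where $k$ is the length in the $x^i$ and $j$ indexes the discrete energy levels $\lambda_j \uparrow \infty$ supplied by the gapped condition on $G$. Since $\CL_v = d\circ i_v + i_v\circ d$ raises the combined index by at least one, the automorphism constructed at step $N$ fixes all terms of combined index up to $N$, which makes the infinite composition a well-defined filtered $\AI$-isomorphism and completes the proof.
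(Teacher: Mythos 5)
Your proposal follows the paper's route almost exactly: Proposition \ref{prop:first} carried over to the filtered setting, pull-back to the finite-dimensional canonical model, the filtered Darboux theorem to produce the cyclic model $B$, and then the recursive construction of a filtered $\AI$-automorphism killing $d\CL_Q\eta$ to prove independence of the representative. The attention you pay to non-negativity of energy (of $\omega_{\WT{\phi_0}}$, of $\eta$, $\gamma$, and of the solution $v$ of $i_v\omega=-\CL_Q\eta$) matches the paper's concerns.

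There is one concrete gap: you organize the final recursion by the combined index $k+j=N$, ``as in the proof of Theorem \ref{darboux}'', and assert that $\CL_v$ raises this index by at least one. The paper deliberately does \emph{not} use this index here; it assigns to a monomial $T^{\lambda_j}x^{i_1}\cdots$ the order $2j+(\text{word length})$ and runs the induction over $k+2j=N$. The doubling matters precisely because, even on the canonical model (where $m_{1,0}=0$), the vector field $Q$ retains components coming from $m_{0,\beta}$ and $m_{1,\beta}$ with $\lambda(\beta)>0$: these have word length $0$ or $1$ but positive energy. With weight $j$ a bare coefficient $T^{\lambda_1}$ has order $1$, so $i_Q$ (and likewise $i_v$ for a $v$ containing low-length, positive-energy terms) is only guaranteed to raise the index by $1$; since $d$ lowers it by $1$, the Lie derivatives $\CL_Q$ and $\CL_v$ need not strictly increase your index, and the successive corrections $\sum_{k\geq 1}a_k(\CL_v)^k(d\CL_Q\eta)$ need not become small, so the infinite composition of automorphisms is not visibly well defined. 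With weight $2j$ every nonconstant-in-the-relevant-sense term of $Q^i$ and $v^i$ has order at least $2$, $i_Q$ and $i_v$ raise the order by at least $2$, and $\CL_Q$, $\CL_v$ raise it by at least $1$, which is what makes the recursion converge (this is also why the filtered analogue of Lemma \ref{lem:homfromv}, stated as Lemma \ref{lem:homfromv2}, demands order $\geq 2$ \emph{and} no constant vector field term). So the Darboux induction and the equivalence induction genuinely require different gradings, and your proof needs this adjustment at that one step.
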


\begin{proof}
The correspondence can be proved  using the lemma in the previous section. Hence it is enough to prove that
cohomologous  negative cyclic homology classes provide equivalent strong homotopy inner products.
As before, we pull-back all the related notions to the canonical model $H^\bullet(C,m_{1,0})$, which is unital and
finite dimensional.  

In the unital case, as the two cycles $\phi, \phi '$ are cohomologous, we may write
$\phi^{'} = \phi + (b^* + vB^*) \psi$. Hence may write for some $\eta$, $\gamma \in C^{\bullet}_{-}(A,A^*)$ 
$$\phi^{'}_0 = \phi_0 + b^* \eta + B^* \gamma.$$
Here we also assume that each term of $\eta$ and $\gamma$  also has non-negative energy.
So we have $\WT{\phi^{'}} = \widetilde{\phi}+\wtd{b^* \eta}$ as before.
To find a filtered $\AI$-automorphism $f$ satisfying the diagram \ref{defdiagram6},
we proceed as before but only modify the inductive argument using sum of order and energy.

In fact, we run the induction over the sum $k+2j= N$, where $k$ is the power of $x^i$'s and 
 $j$ is for the energy level $\lambda_j$ as in the proof of the theorem \ref{darboux}.
The reason that we use $2j$ instead of $j$ is as follows.

First, given a differential form 
$$c T^{\lambda_j} x_{i_{11}}\cdots x_{i_{1a_1}}dx_{j_1}x_{i_{21}}\cdots x_{i_{2a_2}}dx_{j_2}\cdots dx_{j_{m-1}}x_{i_{m1}}\cdots x_{i_{ma_m}}$$
we define its order to be $2j + a_1 + \cdots a_m$.
One can note that $d$ decrease the order by one, and $i_Q$ for the canonical model, increase the order by at least two.
Hence, the Lie derivative $\CL_Q=d \circ i_Q + i_Q \circ d$ increases the order by one.

Now, we will work with formal vector field $v$ such that $\CL_v \omega = -d\CL_Q \eta$ as before.
Note that such a $v$ can be chosen without a constant vector field term.
Then, the following can be proved analogously:
\begin{lemma}\label{lem:homfromv2}
In the filtered case, a formal vector field $v$ which satisfies $[Q,v]=0$ provides
an $\AI$-automorphism. Here $v$ is assumed to have order $\geq 2$ and no constant vector field term.
(i.e. any non-trivial component of $v$ which is given by $T^{\lambda_j} f_i(x)\PD{i}$ satisfies
$\big( order (f_i(x)) + 2j \geq 2 \big)$ and $f_i(x)$ is not constant.)
\end{lemma}

The rest of proof works as in the unfiltered case. In this case also,
the automorphism $f$ constructed above will change the symplectic form as
$$\omega + d\CL_Q \eta \longmapsto \omega + \sum_{k \geq 1} a_k (\CL_v)^k(d\CL_Q \eta)$$
But, note that $v$ has at least have order two. Hence,
$\CL_v = d \circ i_v + i_v \circ d$ increase the order at least by one.
Hence even in the gapped filtered case, the induction works as in the unfiltered case.
\end{proof}

\bibliographystyle{amsalpha}

\end{document}